\newcommand{\SL}{\sum\limits_}
\newcommand{\PL}{\prod\limits_}
\newcommand{\R}{\mathbb{R}}
\newcommand{\N}{\mathbb{N}}
\newcommand{\Z}{\mathbb{Z}}
\newcommand{\PR}{\mathbb{P}}
\newcommand{\E}{\mathbb{E}}
\newtheorem{ccounter}{ccounter}[section]
\newtheorem{thm}[ccounter]{Theorem}
\newtheorem{lem}[ccounter]{Lemma}
\newtheorem{cor}[ccounter]{Corollary}
\newtheorem{defn}[ccounter]{Definition}
\newtheorem{prop}[ccounter]{Proposition}
\newtheorem{rem}[ccounter]{Remark}
\newtheorem{claim}[ccounter]{Claim}
\newtheorem{obs}[ccounter]{Observation}
\title{Polynomial mixing of the critical Ising model on sparse Erdos-Renyi graphs}
\author{Kyprianos-Iason Prodromidis\thanks{Department of Mathematics, Princeton University, Email: kp2702@princeton.edu} \and Allan Sly\thanks{Department of Mathematics, Princeton University, Email: allansly@princeton.edu}}
\date{}
\begin{document}
\maketitle
\begin{abstract}
We consider the stochastic Ising model on sparse Erdös-Rényi graphs $G(n,d/n)$ with $d>1$ at the critical temperature $\beta_c=\tanh^{-1}(d^{-1})$ and prove that with high probability, the mixing time is at most polynomial in $n$. Our approach combines the recent stochastic localization framework of Chen and Eldan, which yields spectral gap bounds in the well-behaved bulk of the graph, together with classical results on the relaxation time of Glauber dynamics on trees to handle regions where we cannot apply the Chen–Eldan method  directly because of atypically large local neighborhoods. 
\end{abstract}
\vspace{1em}
\section{Introduction}
A key question in the study of the Ising model concerns how long it takes to reach equilibrium when spins evolve according to the Glauber dynamics. This Markov chain, introduced by Glauber \cite{Glauber} in 1963, provides a natural Markov process in which spins evolve according to a local update rule. The resulting chain is reversible with respect to the Gibbs distribution, and its mixing time depends both on the geometry of the underlying graph and the temperature regime. Understanding the speed of convergence has important implications for sampling and algorithmic approximations.

For each graph family, it is expected that there exists a critical value $\beta_c$ that separates three qualitatively different regimes: In high temperatures, when $\beta < \beta_c$, fast convergence occurs due to spatial mixing properties that hold in this regime. In low temperatures, when $\beta > \beta_c$, convergence is slow, typically exponential in some parameter that depends on the geometry of the graph. At the critical temperature, when $\beta = \beta_c$, mixing is typically polynomial but suboptimal, due to the lack of exponential decay of correlations. This behavior has been verified in different families of graphs which we will briefly summarize.\\

\textbf{Complete Graph}: First we examine the case $G=K_n$. For $\beta<\beta_c$, as proven in \cite{sub-complete}, the mixing time is $\Theta(n\log(n))$. For $\beta>\beta_c$, mixing is exponentially slow in the volume of the graph \cite{super-complete}. In \cite{critical-complete} it was shown that at $\beta=\beta_c$, the mixing time is $\Theta(n^{3/2})$.\\

\textbf{Trees}: The case of trees was extensively studied in \cite{BCMP} where it was shown that $\beta_c$ corresponds to the reconstruction threshold. In this case the crucial parameter is the height $h$ of the tree. The relaxation time in the high-temperature regime is bounded, while in the low-temperature regime it is exponential in $h$. The critical temperature was settled later, in \cite{DLP}, where polynomial relaxation was shown.\\

\textbf{Lattices}: The case of lattices, $G=\Z^d$ for $d\ge2$, poses additional challenges due to the more complicated geometry. The mixing rates in the high and low temperature regimes have been established since the early 2000s, see \cite{sub-lattices} for the high-temperature regime and \cite{2d-super}, \cite{Pisztora}, \cite{Bodineau} for the low-temperature regime where the mixing time is exponential in the surface area of a cube. On the other hand, proving polynomial mixing at the critical temperature has proven to be a much harder problem. For $\Z^2$, polynomial mixing was proven by Lubetzky and the second author \cite{Lubetzky-Sly}, and the proof relies heavily on the planarity of dimension 2. Recently, breakthrough works by Bauerschmidt and Dagallier \cite{BD}, as well as Chen and Eldan \cite{StochLoc}, established new tools to bound the log-Sobolev constant of Glauber dynamics for the Ising models. In both of these works, the inequalities shown involve the \textit{susceptibility} of the model, defined as
\begin{equation}\label{suscept}
\chi_\beta=\sup\limits_{x\in G}\SL{y\in G}\E_\beta(\sigma_x\cdot\sigma_y).
\end{equation}
Showing that
$\chi_\beta \leq C(\beta_c-\beta)^{-1}$ implies a polynomial bound on the log-Sobolev constant of the Glauber Dynamics. Known bounds on the decay of correlations of the Ising model imply polynomial mixing when $d\ge5$. These do no hold for $d\leq 4$ and so the question of polynomial mixing for the cases $d=3,4$ remains an open question.\\

\textbf{Sparse Random Graphs}: Another case that has attracted interest more recently are sparse random graphs. In the work of Mossel and the second author \cite{Mossel&Sly}, it was shown that the mixing time in the high-temperature regime is optimal, i.e. of order $\Theta(n\log(n))$ for random regular graphs and of order $n^{1+\Theta(1/\log\log(n))}$ for Erdős–Rényi graphs. Around the same time, exponential mixing in the size of the graph was established both in the case of random regular graphs \cite{G-M} and in the case of sparse Erdös-Rényi graphs \cite{D-M}. As noted in \cite{Polchinski-eq} (see also \cite{CCYZ}, \cite{PS} for the full details), the techniques developed by Bauerschmidt, Dagallier, Chen and Eldan can be used to prove polynomial mixing in the case of regular graphs as well.\\\\
In this paper, we analyze the Glauber dynamics at criticality for sparse Erdős–Rényi graphs and prove polynomial mixing with high probability.
\begin{thm}\label{main-theorem}
Consider the mixing time $t_{\text{mix}}$ of the Glauber dynamics for the Ising model on $G(n,d/n)$, with $d\tanh(\beta)=1$. There exists a constant $D=D(d)$, independent of $n$, such that
$$\lim\limits_{n\to\infty}\PR(t_{\text{mix}}>n^D)=0.$$
\end{thm}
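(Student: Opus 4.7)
The plan is to decompose $G \sim G(n,d/n)$ into a well-behaved bulk and a small collection of atypical ``bad'' pieces, and then combine the Chen--Eldan stochastic localization method on the bulk with the polynomial tree-relaxation estimates of \cite{DLP} on the bad pieces via a block-dynamics argument.

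First, I would fix scales $\ell = C\log\log n$ and $\Delta$ slightly below the typical maximum degree of $G$, and call a vertex $v$ \emph{good} if its $\ell$-neighborhood in $G$ is a tree all of whose vertices have degree at most $\Delta$. A standard coupling of these neighborhoods with Poisson($d$) Galton--Watson trees, together with Markov bounds on the rare events of large degree and of multiple cycles in small neighborhoods, shows that with high probability the bad vertices form small components $B_1, \ldots, B_k$, each attached to the bulk $B = V(G) \setminus \bigcup_i B_i$ through a constant-size interface. Each $B_i$ is, up to negligible modification, a subtree of depth $O(\log\log n)$ rooted near the high-degree vertex responsible for its badness.

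Next, on the bulk I would apply the Chen--Eldan framework, which reduces polynomial mixing of the Glauber dynamics on $B$ (with any boundary condition) to a polynomial bound on the susceptibility $\chi_{\beta_c}(B)$ from \eqref{suscept}. Since $d\tanh\beta_c = 1$ is exactly the Kesten--Stigum / reconstruction threshold, the truncated tree sum $\sum_{k \le h}(d\tanh\beta_c)^k$ grows only linearly in $h$; combining this with concentration estimates for the generation sizes of depth-$O(\log n)$ Galton--Watson trees and the locally tree-like structure of the bulk should yield $\chi_{\beta_c}(B) \le n^{a}$ for some $a = a(d)$, and Chen--Eldan then delivers a polynomial lower bound on the log-Sobolev constant of the Glauber dynamics on $B$. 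Meanwhile, each bad piece $B_i$ is essentially a tree of depth $O(\log\log n)$, so the critical tree result of \cite{DLP} gives polynomial relaxation time on $B_i$ uniformly in its boundary condition.

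Finally, I would glue the two regimes together by analyzing the block dynamics whose blocks are $B$ and $B_1,\ldots, B_k$. Each block mixes polynomially by the two preceding steps, and a Dirichlet-form comparison (or the Peres--Winkler censoring inequality, exploiting the monotonicity of the Ising model) upgrades the spectral gap of the block dynamics to that of the full Glauber dynamics at the cost of a polynomial factor. The main obstacle will be the susceptibility bound on the bulk: the tree estimates sketched above exploit the fact that correlations factor through unique paths, whereas $B$ inherits the short cycles of $G(n,d/n)$, whose cumulative contribution at criticality has to be controlled by a careful second-moment argument on generation sizes. A secondary difficulty is that the Chen--Eldan inequalities are typically stated for subcritical $\beta$, so adapting them to exactly $\beta = \beta_c$ with the appropriate polynomial-in-$\chi$ dependence --- while accommodating the boundary conditions inherited from the bad pieces --- will also require care.
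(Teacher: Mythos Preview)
Your proposal has the right overall architecture---decompose, apply Chen--Eldan on the bulk, handle bad pieces separately, and glue---but the Chen--Eldan step as you describe it does not work. The stochastic localization bound (Lemma~\ref{Chen-Eldan-Spectral} in the paper) gives
\[
\text{gap}(\nu)\ \ge\ \varepsilon\cdot\exp\Bigl(-2\lVert J\rVert_{\text{OP}}\int_0^1\alpha(t)\,\mathrm{d}t\Bigr),
\]
where $\alpha(t)$ dominates $\lVert\mathrm{Cov}(\nu_{t,u})\rVert_{\text{OP}}$ along the interpolation. A bound $\chi_{\beta_c}(B)\le n^a$ only controls $\alpha(0)$ (and by monotonicity all $\alpha(t)$) by $n^a$, yielding $\text{gap}\ge\exp(-cn^a)$, which is not polynomial. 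In fact $\alpha(0)$ really can be of polynomial size here: even with perfect SAW control $S_v^{(\ell)}\le C d^\ell$, one has $\sum_\ell \theta^\ell S_v^{(\ell)}\asymp n$. The paper does \emph{not} bound the critical susceptibility. Instead, the localization at parameter $t>0$ puts effective inverse temperature $\beta(1-t)<\beta_c$ on the $B$--$B$ edges, and Property~3 of Theorem~\ref{main-thm-determ} (at least $60\%$ of the edges on any long SAW are $B$--$B$) ensures that $\alpha(t)$ picks up a factor $\tanh(\beta(1-t))^{0.6\ell}$ on paths of length $\ell$. Integrating this in $t$ produces an extra $1/\ell$, and the resulting sum $\sum_\ell d^{-\ell}\ell^{-1}\sup_{v\in B}S_v^{(\ell)}$ is what Property~4 bounds by $O(\log n)$. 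This ``most long paths live mostly in the bulk'' statement is the crucial structural input, and your decomposition at scale $\ell=C\log\log n$ is far too local to see it: the SAWs governing the critical covariance have length of order $\log_d n$, and controlling them requires the paper's definition of $B$ via neighborhood growth up to radius $\delta\log_d n$ (Definition~\ref{B-defn}) together with the substantial moment computations of Section~4.

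There are secondary mismatches as well. The bad components in the paper have diameter up to $\delta\log_d n$ and tree excess at most one, not depth $O(\log\log n)$ trees, so \cite{DLP} does not directly apply; the paper uses Proposition~\ref{prop-4-MS} from \cite{Mossel-Sly}, which handles the extra cycle and arbitrary boundary fields. And the gluing is done not via block dynamics or censoring but through the explicit chain of comparisons in Lemmas~\ref{comp1-2}--\ref{comp4-5}, which pass through an accelerated dynamics on $A$ and the restricted dynamics on $B$. These could perhaps be replaced by a block-dynamics argument, but the core issue above---that you need the integrated covariance along the localization path to be logarithmic, not the endpoint susceptibility to be polynomial---is a genuine gap that your outline does not address.
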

The main challenge compared to the regular graph case is the presence of high-degree vertices, which preclude the direct application of the susceptibility bounds discussed earlier. However, for most vertices $x$, the quantity $\SL{y\in G}\E_\beta(\sigma_x\cdot\sigma_y)$ that appears in (\ref{suscept}) is bounded above by $C(\beta_c-\beta)^{-1}$. Essentially, we show that as long as the vertices where this decay fails are sufficiently rare and well-separated, the framework developed in \cite{StochLoc} remains applicable and implies polynomial mixing. In fact, Theorem~\ref{main-theorem} will follow as a consequence of the  following theorem, once we prove that the properties listed hold with high probability for a $G(n,d/n)$.
\begin{thm}\label{main-thm-determ}
Let $G$ be a graph on $n$ vertices. Assume that for some $\delta,C,C',C''>0$ (possibly depending on $d$ but independent of $n$) there is a partition of the vertices of $G$ into two sets $A,B$ such that:
\begin{enumerate}
\item 
We denote $H$ as the graph induced by the edges which have at least one endpoint in $A$.  The connected components of $H$ have tree excess at most 1.
\item 
The inequality
$$\max\limits_{\gamma\subset H}\SL{v\in \gamma}\deg(v)\le C\log(n),$$
holds, where the maximum runs over all Self-Avoiding paths in $H$.
\item 
In every Self-Avoiding path $p$ in $G$ of length at least $\ell\ge\delta\log_d(n)$, at least $0.6\ell$ of its edges connect vertices in $B$.
\item 
For each $1\le\ell\le n$ and $v\in G$, let $S_v^{(\ell)}$ be the number of Self-Avoiding Walks of length $\ell$  in $G$ originating from $v$. Then, $S_v^{(\ell)}\le C'\cdot d^\ell$, for every $v\in B,\ell\le\delta\log_d(n)$ and
$$\SL{\ell=1}^nd^{-\ell}\cdot\dfrac{1}{\ell}\cdot\sup\limits_{v\in B} S_v^{(\ell)}\le C''\log(n).$$
\end{enumerate}
Then, for some $D=D(\delta,C,C',C'')>0$, the mixing time $t_{\text{mix}}$ of the Glauber Dynamics for the Ising Model with inverse temperature $\beta=\tanh^{-1}(d^{-1})$ satisfies
$$t_{\text{mix}}\le n^D.$$
\end{thm}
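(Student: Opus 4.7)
The plan is to decompose the graph into the "bulk" $B$ and the near-tree components of $H$, control the mixing on each piece separately, and glue them via a block-dynamics argument. The Chen--Eldan stochastic localization framework will handle $B$ by way of a susceptibility bound, while the tree-relaxation results of BCMP \cite{BCMP} and DLP \cite{DLP} will handle the components of $H$; condition 3 provides the spatial decoupling required to combine the two.

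I would first show that, conditional on any spin configuration $\sigma_A$ on $A$, the susceptibility of the induced Ising measure on $G[B]$ is $O(\log n)$. Using the classical random-walk/random-current bound
$$\E_{\sigma_A}(\sigma_x \sigma_y)\;\le\;\sum_{p\,:\,x\to y\text{ SAW in }G[B]}(\tanh\beta)^{|p|}\;=\;\sum_{p}d^{-|p|},$$
and bounding the number of SAWs of length $\ell$ in $G[B]$ by $S_v^{(\ell)}$ (which is a bound in the ambient graph $G$), condition 4 gives
$$\sup_{v\in B}\sum_{y\in B}\E_{\sigma_A}(\sigma_v\sigma_y)\;\le\;O(\log n),$$
uniformly in $\sigma_A\in\{\pm1\}^A$; the bound from condition 4 for $\ell\le\delta\log_d(n)$ handles the short-range part, and for $\ell>\delta\log_d(n)$ the $\frac{1}{\ell}$ factor combined with condition 3 (which forces long paths to spend a positive fraction of their steps in $B$) controls the tail. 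Plugging this susceptibility estimate into the stochastic-localization machinery of \cite{StochLoc} then yields a polynomial lower bound on the spectral gap (equivalently on the log-Sobolev constant) of Glauber dynamics restricted to $B$ with any fixed $A$-boundary.

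Next, for each connected component $C$ of $H$, condition 2 forces $C$ to have diameter $O(\log n)$ and maximum degree $O(\log n)$, while condition 1 says $C$ is a tree plus at most one extra edge. Polynomial mixing on $C$ for any boundary condition then follows from the tree-relaxation bounds of BCMP/DLP at $\beta$, together with a short Peres--Winkler censoring / Dirichlet-form comparison argument to absorb the single excess edge (either by conditioning on the pair of spins at its endpoints, or by comparing to the spanning tree). Finally, I combine the pieces via a block dynamics that alternates heat-bath resampling on $A$ and on $B$. A standard decomposition of the spectral gap (in the style of Jerrum--Son--Tetali--Vigoda or Madras--Randall) bounds the block-chain gap below by the minimum over boundaries of the conditional gaps on $A$ and $B$ (polynomial by the previous two steps), multiplied by the gap of the projected chain obtained by collapsing each $H$-component; condition 3 combined with the same path expansion shows that the projected interactions decay in length, so the projected chain has polynomial gap as well. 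Translating back to single-site Glauber dynamics via a Dirichlet-form comparison loses at most a polynomial factor, and $t_{\text{mix}}\le n^D$ follows from $t_{\text{mix}}\le t_{\text{rel}}\cdot O(\log n)$. The main obstacle I anticipate is keeping track of the polynomial exponents throughout the reductions and, in particular, ensuring that the potentially $\Theta(n)$ components of $H$ do not accumulate super-polynomially when summed in the block decomposition.
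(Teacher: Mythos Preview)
Your central step---bounding the susceptibility on $B$ by $O(\log n)$ and then feeding that into Chen--Eldan---does not work as stated, and this is not a technicality but the heart of the matter. Condition~4 controls $\sum_\ell d^{-\ell}\,\tfrac{1}{\ell}\,\sup_v S_v^{(\ell)}$, \emph{with} the factor $1/\ell$; it says nothing about $\sum_\ell d^{-\ell} S_v^{(\ell)}$, which is what your path expansion for the susceptibility at $\beta=\beta_c$ actually produces. At criticality $d\tanh\beta=1$, so $d^{-\ell}S_v^{(\ell)}$ is typically of order~$1$ for each $\ell$, and the unweighted sum is polynomially large---this is precisely what it means to be critical. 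The $1/\ell$ weight in condition~4 is not a bookkeeping artifact: in the paper's argument it is \emph{generated} by the stochastic localization itself. One applies Chen--Eldan not to a single measure but along a path $\nu_{t,u}$ in which the $B$--$B$ interactions are reduced to $\beta(1-t)$; then the covariance operator norm at parameter $t$ is bounded (via the Weitz tree and the Ding--Song--Sun covariance inequality) by $\sum_\ell \theta^\ell \sum_p (\tanh(\beta(1-t))/\theta)^{\#\text{good}(p)}$. Condition~3 guarantees that long paths have at least $0.6\ell$ good edges, and the identity $\int_0^1 \tanh(\beta(1-t))^{0.6\ell}\,dt \asymp \theta^{0.6\ell}/\ell$ is what produces the $1/\ell$ and makes condition~4 applicable. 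Your sketch treats Chen--Eldan as a black box consuming a susceptibility bound; at criticality one has to open the box and use the interpolation.

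Two smaller points. First, you work with the conditional Ising measure on $G[B]$ given $\sigma_A$, but the relevant object is the \emph{marginal} on $B$; the paths contributing to covariances run through $A$ as well, and handling the induced external fields requires something like the Weitz tree plus a correlation inequality (the paper's Lemma~2.7), not just paths in $G[B]$. Second, for the $H$-components you invoke BCMP/DLP plus censoring for the extra edge; this can be made to work, but the cleaner route (and the one that directly matches conditions~1 and~2) is Proposition~4 of \cite{Mossel-Sly}, which gives $t_{\text{rel}}\le \exp(4\beta(m+s))$ for any graph with tree excess $s$ and max-path degree-sum $m$, immediately yielding a polynomial bound from $m\le C\log n$, $s\le 1$.
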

\begin{prop}\label{ER-whp-prop}
For each $d>1$ there exist $\delta,C,C',C''>0$ such that a graph $G$ drawn from $G(n,d/n)$ admits, with high probability, a partition of the vertices in two parts $A$ and $B$ that satisfies all the properties listed in Theorem \ref{main-thm-determ}.
\end{prop}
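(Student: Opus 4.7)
The plan is to construct the partition by letting $A$ collect the rare ``atypical'' vertices and $B$ be the bulk of the graph. Fix a large constant $K = K(d)$, a large constant $C' = C'(d)$, and a small $\delta = \delta(d)>0$; set $L := \delta \log_d n$. I would take $A$ to consist of (i) every vertex of degree exceeding $K$ in $G$; (ii) every vertex whose $L$-neighborhood $N_L(v)$ is not a tree; and (iii) every vertex $v$ for which $|N_\ell(v)| > C' d^\ell$ for some $\ell \le L$. Then $B := V(G)\setminus A$.

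First I would record the structural facts about $G(n,d/n)$ that make $|A|/n$ small, via classical moment calculations and branching-process estimates. With high probability: degrees are approximately Poisson$(d)$-distributed, so the density of degree-$(>K)$ vertices can be made as small as desired by taking $K$ large; the maximum degree is $O(\log n/\log\log n)$; for $\delta<1/2$ the expected number of vertices whose $L$-ball contains a cycle is $O(n^{2\delta}) = o(n)$, by counting pairs of vertices of $N_L(v)$ and using $P(u\sim w)=d/n$; and Doob's maximal inequality applied to the martingale $Z_\ell/d^\ell$ of the Poisson-$(d)$ branching process, which approximates $|N_\ell(v)|$ on tree-like vertices, bounds the density of atypical-growth vertices by $O(1/C'^{2})$. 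Hence $|A|/n$ can be made arbitrarily small by choosing $K$, $C'$ large and $\delta$ small.

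With this setup, Properties 2, 3 and 4 follow relatively directly. For Property 4, on $v\in B$ the ball $N_L(v)$ is a tree with controlled growth, so $S_v^{(\ell)}=|N_\ell(v)|\le C' d^\ell$ for $\ell\le L$; the sum condition over larger $\ell$ is handled via a global first-moment estimate on the total number of self-avoiding walks in $G(n,d/n)$, whose expectation is of order $n d^\ell$ in the range of interest. Property 3 is a first-moment argument over the at most $n d^\ell$ self-avoiding paths of each length $\ell\ge L$: smallness of $|A|/n$ combined with a standard large-deviation bound on the number of $A$-vertices along a fixed path rules out long paths with more than $0.4\ell$ vertices in $A$, provided $\delta$ is chosen so the entropy term $\log d$ is beaten by the tail rate. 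Property 2 follows from the bounded number of $A$-vertices per $H$-component (since typical $H$-components are single stars around an isolated high-degree vertex) together with the $O(\log n/\log\log n)$ maximum-degree bound.

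I expect Property 1 to be the main obstacle: showing that every $H$-component has tree excess at most $1$ requires ruling out all configurations of $A$-vertices that jointly create two or more independent cycles in a single $H$-component. By definition of $A$, the only way for an $H$-component to contain multiple cycles is through multiple high-degree vertices sharing neighborhoods, which we must enumerate carefully: pairs of adjacent high-degree vertices with $\ge 2$ common neighbors, triples of high-degree vertices sharing a common neighbor, and analogous short configurations. For each such configuration a direct moment calculation in $G(n,d/n)$ gives expected count $o(1)$, so none occurs with high probability. Any residual $H$-components with tree excess $\ge 2$ can be repaired by adding a handful of extra vertices to $A$ to break the extra cycles, which preserves $|A|/n = o(1)$. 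Calibrating $K$, $C'$, and $\delta$ so that all four moment bounds close simultaneously --- in particular so that Property 3 remains valid even after the density-$\epsilon$ $A$ produced by the growth condition --- is the technical crux of the proof.
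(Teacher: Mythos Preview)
Your partition is close to the paper's (which uses only your criterion (iii), the ball-growth condition), but you have misjudged where the difficulty lies. Property~1 is not the crux; Property~4 is, and your treatment of it has a genuine gap.

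\textbf{The gap in Property 4.} The condition requires controlling $\sup_{v\in B} S_v^{(\ell)}$ for \emph{all} $\ell$ up to $n$, and the sum $\sum_\ell d^{-\ell}\ell^{-1}\sup_{v\in B} S_v^{(\ell)}$ must be $O(\log n)$. A first-moment bound on the total number of self-avoiding walks gives only $\sup_v S_v^{(\ell)}\le\sum_v S_v^{(\ell)}\lesssim n\,d^\ell$, off by a factor of $n$. A first moment on $S_v^{(\ell)}$ for fixed $v$ followed by Markov and a union bound over $v$ fails for the same reason. Even high moments do not close the gap once $\ell\gg\log n$: the fluctuations of $S_v^{(\ell)}$ are genuinely too large to beat a union bound over $n$ vertices. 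The paper's route is substantially more elaborate. For the mid-range $\ell\le K\log_d n$ it conditions on the ball $B(v,r_v)$ of size $\sim n^{\delta/10}$ and performs an $s$-th moment calculation with $s\asymp\log n$ on the walks leaving that ball. For $\ell>K\log_d n$ it invokes the spectral theory of the non-backtracking matrix (Bordenave--Lelarge--Massouli\'e) to show that after $L=K\log_d n$ non-backtracking steps the count $N_{vy}^{(L)}$ approximately factorizes as $(\text{function of }y)\times N_v^{(L)}$; this converts the supremum over $v\in B$ into an \emph{average} over starting vertices, which is then controlled by a second-moment argument. You are missing this entire mechanism, and there is no evident shortcut.

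\textbf{Secondary issues.} For Property~3, the events $\{v_i\in A\}$ along a fixed path are not independent --- membership in $A$ depends on overlapping $\delta\log_d n$-neighbourhoods --- so a ``standard large-deviation bound'' does not apply directly; the paper runs exploration processes from each $v_i$ avoiding the path and previously explored vertices, obtaining stochastic domination by \emph{independent} Galton--Watson maxima, to which large deviations can be applied. For Property~1, your repair step is backwards: $H$ is the graph of edges with at least one endpoint in $A$, so moving vertices into $A$ enlarges $H$ rather than breaking its cycles. In fact Property~1 is nearly free once Property~3 is in hand: Property~3 forces every $H$-component to sit inside a ball of radius $\delta\log_d n$, and such balls in $G(n,d/n)$ have tree excess at most $1$ with high probability by a standard union bound.
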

\subsection{Outline of the paper}
The paper is structured as follows. In Section 2, we introduce the necessary definitions and the notation with which we will be working with. In Section 3, we prove Theorem~\ref{main-thm-determ}. Our method seeks to use the tools developed in \cite{StochLoc} for the Glauber dynamics on $B$, together with a theorem of \cite{Mossel-Sly}, which treats the Glauber dynamics on $H$. More specifically: 
\begin{itemize}
\item 
Using Proposition~4 from~\cite{Mossel-Sly}, we show that the relaxation time of the Glauber dynamics on $H$ is at most polynomial in $n$. It will be crucial that $H$ has logarithmic small diameter and that the tree excess of each connected component is at most 1.
\item 
We compare the Glauber dynamics on $G$ to the Restricted dynamics on $B$, and show that their spectral gaps are at most a polynomial factor away from each other. The argument to achieve that is roughly as follows: We run the Glauber Dynamics on $G$, so that the rate at which we update vertices in $A$ is much higher than the rate at which we update vertices in $B$. This way, between every update in $B$, the configuration on $A$ is really well mixed, and at the same time the Dirichlet form (and therefore the spectral gap) only loses a polynomial factor.
\item 
Using the tools developed in \cite{StochLoc}, we show that there is only a polynomial factor separating the spectral gaps of the Restricted dynamics on $B$ and the Glauber dynamics on $H$.\\
\end{itemize}
Section 4 is devoted to the proof of Proposition \ref{ER-whp-prop}. The way we decompose the graph into two parts is based on whether the initial branching of the vertices is faster than usual or not.  We let $$B(x,r)=\{y\in G:\text{d}_G(x,y)\le r\}$$ denote the ball of radius $r$ around $x$ and $\partial B(x,r)=B(x,r)\setminus B(x,r-1)$ its boundary.
\begin{defn}\label{B-defn}
For a vertex $x$,
$$x\in B_{\delta,C}\Leftrightarrow \max\limits_{r\le\delta\log_d(n)}|\partial B(x,r)|\cdot d^{-r}\le C.$$
\end{defn}
For the sake of concreteness, we can set $\delta=10^{-1}$, and $C>0$ will be a sufficiently large constant chosen later. From now on, we omit the dependence of $B$ on $\delta$ and $C$, and let $A=G\setminus B$.
\begin{rem}\label{balls-slowly}
Assume $x\in B$. Then, for each $r\le\delta\log_d(n)$,
$$|B(x,r)|=\SL{r'=0}^r|\partial B(x,r')|\le C\SL{r'=0}^rd^{r'}\le C\cdot(1-d^{-1})^{-1}\cdot d^r.$$
Therefore, as expected, the balls centered at $x$ also do not grow very quickly.
\end{rem}
In Subsection \ref{4.1}, we prove that Properties 1, 2 and 3 from Theorem \ref{main-thm-determ} hold with high probability. Subsection~\ref{4.2} is devoted to proving an important lemma on Non-Backtracking walks, which allows us to avoid a union over vertices while trying to prove Property~4. Next, Subsection \ref{4.3} contains several lemmas on the neighborhood structure in $G(n,d/n)$, necessary for our calculations. The proof of Property~4 is concluded in Subsection \ref{4.4}, where there are moment calculations regarding numbers of walks in $G(n,d/n)$.
\section{Preliminaries}
In this section, we provide some important standard definitions and well-known results which will be relevant throughout this paper.
\subsection{Continuous-time Markov Chains}
In this paper, we are working with continuous-time Markov chains. For completeness, we provide the relevant definitions.
\begin{defn}\label{CTMC}
\begin{itemize}
\item
Let $\mathcal{X}$ be a finite state space and $Q$ be a matrix indexed by $\mathcal{X}\times\mathcal{X}$, such that $q(x,y)\ge0$ for $x\neq y$ and
$$q(x,x)=-\SL{y\neq x}q(x,y),\ \ \text{for all}\ \ x\in\mathcal{X}.$$
The Markov chain $(X_t)_{t\ge0}$ with jump matrix $Q$ is the $\mathcal{X}$-valued process which evolves according to the following rule: When currently in state $x$, each $y\neq x\in\mathcal{X}$ is assigned a Poisson clock at rate $q(x,y)\ge0$, with the clocks being independent of each other. As soon as the first clock rings, the process moves to the respective state. The total jump rate from $x\in\mathcal{X}$ is
$$q(x):=\SL{y\neq x}q(x,y).$$
\item
The heat kernel $H_t$ is defined by
$$H_t(x,y):=\PR(X_t=y\ |\ X_0=x).$$
As in discrete-time Markov Chains, there exists a stationary distribution $\pi$ such that 
$$\sup\limits_{x\in\mathcal{X}}\text{d}_{\text{TV}}(H_t(x,\cdot),\pi)\xrightarrow[t\to\infty]{}0,$$
where for two probability measures on $\mathcal{X}$, $\text{d}_{\text{TV}}(\mu,\nu)$ is the \textbf{Total Variation Distance} between $\mu,\nu$, i.e.
$$\text{d}_{\text{TV}}(\mu,\nu)=\sup\limits_{A}|\mu(A)-\nu(A)|=\dfrac{1}{2}\SL{x\in\mathcal{X}}|\mu(x)-\nu(x)|.$$
\item 
The mixing time of this Markov Chain is defined to be
$$t_{\text{mix}}(\varepsilon)=\inf\left\{t\ge0:\ \sup\limits_{x\in\mathcal{X}}\text{d}_{\text{TV}}(H_t(x,\cdot),\pi)\le\varepsilon\right\}.$$
We also set $t_{\text{mix}}:=t_{\text{mix}}(1/4)$.
\item 
Assume $(X_t)_{t\ge0}$ is reversible and let $0=\lambda_1<\lambda_2\le\cdots\le\lambda_{|\mathcal{X}|}$ be the eigenvalues of the matrix $-Q$. The spectral gap of $(X_t)_{t\ge0}$, denoted $\text{gap}(X_t)$ is defined to be $\lambda_2$. An equivalent definition is
$$\text{gap}(X_t):=\inf\limits_{f\neq c}\dfrac{\mathcal{E}(f,f)}{\text{Var}_\pi(f)},$$
where $\mathcal{E}(f,f)$ is the Dirichlet form associated to $(X_t)_{t\ge0}$, defined as
$$\mathcal{E}(f,f):=-\langle f,Qf\rangle_\pi=\dfrac{1}{2}\SL{x,y\in\mathcal{X}}\pi(x)q(x,y)(f(x)-f(y))^2,$$
for any function $f:\mathcal{X}\to\R$. The inverse of the spectral gap is usually referred to as the \textbf{relaxation time} of the Markov chain.
\end{itemize}
\end{defn}
We also mention an important and well-known theorem, which helps relate the spectral gap and the mixing time of a Markov chain. For a proof, we refer to \cite{AF}, Lemma 4.23.
\begin{thm}\label{rel-spec-mix}
For some $c_1,c_2>0$ and a reversible Markov Chain with spectral gap $\gamma$,
$$c_1\cdot\gamma^{-1}\cdot\log\left(\dfrac{1}{2\varepsilon}\right)\le t_{\text{mix}}(\varepsilon)\le c_2\cdot\gamma^{-1}\cdot\log\left(\dfrac{1}{2\varepsilon\pi_{\min}}\right).$$
\end{thm}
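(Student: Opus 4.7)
The plan is to prove the two inequalities separately, exploiting reversibility to perform a spectral decomposition of the generator. Since the chain is reversible, $-Q$ is self-adjoint with respect to the inner product $\langle f,g\rangle_\pi$, so there is an orthonormal eigenbasis $\phi_1 \equiv 1, \phi_2, \ldots, \phi_{|\mathcal{X}|}$ with eigenvalues $0 = \lambda_1 < \lambda_2 = \gamma \le \cdots \le \lambda_{|\mathcal{X}|}$, and the semigroup acts diagonally by $e^{tQ}\phi_i = e^{-\lambda_i t}\phi_i$. This is the single input that drives both bounds.

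For the upper bound, I would expand the Radon–Nikodym derivative $H_t(x,\cdot)/\pi(\cdot)$ in the eigenbasis and apply Parseval to get
$$\Bigl\|\tfrac{H_t(x,\cdot)}{\pi(\cdot)} - 1\Bigr\|_{L^2(\pi)}^2 \;=\; \SL{i\ge 2} e^{-2\lambda_i t} \phi_i(x)^2 \;\le\; e^{-2\gamma t}\SL{i} \phi_i(x)^2 \;=\; \frac{e^{-2\gamma t}}{\pi(x)},$$
the last equality being the reproducing identity $\sum_i \phi_i(x)^2 = \pi(x)^{-1}$. Combined with the Cauchy–Schwarz comparison $2\,\text{d}_{\text{TV}}(\mu,\pi) \le \|\mathrm{d}\mu/\mathrm{d}\pi - 1\|_{L^2(\pi)}$, this yields $\text{d}_{\text{TV}}(H_t(x,\cdot),\pi) \le \tfrac12 e^{-\gamma t}\pi_{\min}^{-1/2}$. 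Setting the right side $\le\varepsilon$ and solving for $t$ gives the claimed upper bound with $c_2 = 1/2$.

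For the lower bound, I would use the second eigenfunction as a test function. Normalize $\phi_2$ so that $\|\phi_2\|_\infty = 1$; since $\phi_2 \perp \phi_1 = 1$ in $L^2(\pi)$, we have $\E_\pi \phi_2 = 0$, while $\E_x[\phi_2(X_t)] = e^{-\gamma t}\phi_2(x)$. Picking a starting state $x$ attaining $|\phi_2(x)| = 1$ and using the duality bound $2\,\text{d}_{\text{TV}}(\mu,\nu) \ge |\int f\,\mathrm{d}\mu - \int f\,\mathrm{d}\nu|$ valid for any $\|f\|_\infty \le 1$ gives $2\,\text{d}_{\text{TV}}(H_t(x,\cdot),\pi) \ge e^{-\gamma t}$, and requiring this to exceed $2\varepsilon$ gives $t_{\text{mix}}(\varepsilon) \ge \gamma^{-1}\log(1/(2\varepsilon))$, which is the stated lower bound with $c_1 = 1$.

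Neither direction involves a genuine obstacle — the result is classical and both sides reduce to rescalings of $e^{-\gamma t}$. The only care required is to use reversibility consistently (for the existence of a real orthonormal eigenbasis and the symmetry $H_t(x,y)\pi(x) = H_t(y,x)\pi(y)$ which underlies the Parseval identity), and to track constants honestly in passing from the $L^2$ bound to the $L^1$ total variation bound; the presence of $\pi_{\min}$ in the upper bound but not the lower bound is unavoidable, since an eigenfunction concentrated on a small set can distinguish the chain from equilibrium using only one coordinate and thus does not see $\pi_{\min}$.
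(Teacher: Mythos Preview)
The paper does not give its own proof of this statement; it simply cites Aldous--Fill, Lemma~4.23. Your argument is the standard spectral proof (essentially what appears in that reference), and both directions are correct in structure. One small constant slip: in the upper bound you obtain $\text{d}_{\text{TV}}\le\tfrac12 e^{-\gamma t}\pi_{\min}^{-1/2}$, which solves to $t_{\text{mix}}(\varepsilon)\le\gamma^{-1}\log\bigl(1/(2\varepsilon\sqrt{\pi_{\min}})\bigr)$; this is bounded by $\gamma^{-1}\log\bigl(1/(2\varepsilon\pi_{\min})\bigr)$, so $c_2=1$ works, but your claimed $c_2=1/2$ does not follow (it would require $\log(1/(2\varepsilon))\le0$). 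This is cosmetic and does not affect the argument.
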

\subsection{Ising Model and Glauber Dynamics}
\begin{defn}
\begin{itemize}
\item 
Let $G$ be a graph with vertex set $V$. The Ising model on this graph with inverse temperature $\beta$ and external field $h:V\to[-\infty,\infty]$ is the probability measure $\mu$ on $\{-1,1\}^V$ satisfying
$$\mu_{\beta,h}(\sigma)=\dfrac{1}{Z_{\beta,G,h}}\exp\left(\beta\SL{u\sim v}\sigma_u\sigma_v+\SL{v\in V}h(v)\sigma_v\right).$$
By $\mu_\beta$ we will just denote the Ising model with $h=0$.
\item
Fix a measure $\nu$ on $\{-1,1\}^V$. For a configuration $\sigma\in\{-1,1\}^V$ and for some $v\in V$, let $\sigma^{\oplus v}$ be the configuration $\sigma$ in which the spin at vertex $v$ is flipped. For two configurations $\sigma,\tau\in\{-1,1\}^V$, we write $\sigma\sim\tau$ if $\tau=\sigma^{\oplus v}$ for some $v\in V$. The continuous time Glauber Dynamics on $G$ for $\nu$ at rate 1 is a Markov chain with transition rates, for $\sigma\neq\tau$, equal to
$$q(\sigma,\tau)=\begin{cases}
\dfrac{\nu(\tau)}{\nu(\sigma)+\nu(\tau)},\ \ \text{if}\ \ \sigma \sim \tau
\\ \\
0,\ \ \text{otherwise}.
\end{cases}$$
\end{itemize}
\end{defn}
Our analysis in Chapter 3, heavily uses the tree of Self-Avoiding Walks construction by Weitz. This construction provides a good way of understanding spin correlations in Ising models, essentially by "unfolding" the graph into a tree. This representation is useful is because on trees, with zero external field, if $p$ is the unique path between $u$ and $v$, then
\begin{equation}\label{cov-on-trees}
\E(\sigma_u\sigma_v)=\PL{e\in p}\tanh(\beta_e).
\end{equation}
\begin{defn}
Let $G$ be a graph. On the vertices of $G$ fix an enumeration and let $v$ be one of the vertices of $G$. The tree of Self-Avoiding Walks $T_{\text{SAW}}(G,v)$ of $G$ rooted at $v$ is the tree of walks originating at $v$ that do not intersect themselves, except when the walk closes a cycle, at which point the walk ends. When that happens, the second copy of the twice-appearing vertex is fixed to be a $+$, if the edge closing the cycle is larger than the edge starting the cycle, and $-$ otherwise. In this case, comparing the edges really means comparing the respective vertices with respect to the enumeration we initially fixed.
\end{defn}
In the above construction, there is an implied map $\phi$, which maps every vertex of $T_{\text{SAW}}(G,v)$ to its label in the original graph $G$. The reason why this construction is useful, is the following Theorem, essentially proven in \cite{Weitz}.
\begin{thm}\label{Weitz}
Let $y\in V$, and consider the Ising model on $G$, conditioned on $\sigma_y=+$. Also, let $\sigma_{c,y}$ be the corresponding conditions on $T_{\text{SAW}}(G,v)$ obtained as follows: Every vertex $u$ on $T_{\text{SAW}}(G,v)$ which has $\phi(u)=y$ and has not been set to have a specific value from the construction of $T_{\text{SAW}}(G,v)$, is set to be a $+$. Then,
\begin{equation}\label{T-SAW-rel}
\PR_G(\sigma_v=+|\ \sigma_y=+)=\PR_{T_{\text{SAW}}(G,v)}(\sigma_\rho=+|\ \sigma_{c,y}).
\end{equation}
\end{thm}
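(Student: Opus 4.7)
The plan is to prove Theorem~\ref{Weitz} by induction on the cyclomatic number $\kappa(G) := |E(G)| - |V(G)| + 1$ (assuming $G$ connected without loss of generality), following Weitz's classical argument. The base case $\kappa(G) = 0$ is when $G$ is a tree: here the self-avoiding walk exploration from $v$ visits each vertex exactly once without ever closing a cycle, so $T_{\text{SAW}}(G,v)$ is canonically isomorphic via $\phi$ to $G$ rooted at $v$. The conditioning $\sigma_{c,y}$ pulls back under $\phi$ to exactly $\sigma_y = +$, so both sides of (\ref{T-SAW-rel}) describe the same Gibbs measure, and (\ref{cov-on-trees}) even gives an explicit product formula.

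For the inductive step, assume $\kappa(G) \ge 1$ and fix a non-tree edge $(a,b)$ with respect to a BFS spanning tree of $G$ rooted at $v$. I would establish the following \emph{splitting lemma}: let $G'$ be obtained from $G$ by replacing $b$ with two copies $b'$ and $b''$, where $b'$ inherits all of $b$'s neighbors except $a$, and $b''$ is a pendant attached only to $a$ (with the same edge weight as the original edge $(a,b)$). Assign $b''$ a boundary value $+$ if the cycle-closing edge exceeds the cycle-starting edge in the fixed enumeration, and $-$ otherwise. The content of the lemma is
\begin{equation*}
\PR_G\bigl(\sigma_v = +\,\big|\,\sigma_y = +\bigr) \;=\; \PR_{G'}\bigl(\sigma_v = +\,\big|\,\sigma_y = +,\ \sigma_{b''} = \pm\bigr).
\end{equation*}
Because $\kappa(G') = \kappa(G) - 1$, iterating this surgery $\kappa(G)$ times in the order dictated by the depth-first exploration of $G$ that defines $T_{\text{SAW}}(G,v)$ produces a tree canonically isomorphic to $T_{\text{SAW}}(G,v)$, with the pinned boundary values induced along the way matching $\sigma_{c,y}$ exactly. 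The theorem then follows from the base case applied to this final tree.

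To prove the splitting lemma, I would expand the Gibbs factor at the edge $(a,b)$ using the identity $e^{\beta\sigma_a\sigma_b} = \cosh(\beta)\bigl(1 + \tanh(\beta)\sigma_a\sigma_b\bigr)$ and sum over the spin at the pendant $b''$ in $G'$. After suitable rearrangement, the effective weight on $G'$ with a $+$ (resp.\ $-$) pin at $b''$ coincides with the original Gibbs weight on $G$ times a factor independent of the spin at $v$, so the marginal of $\sigma_v$ is preserved. The correct sign of the pin is selected by the symmetry of the Ising model under a global spin flip, together with the tree-covariance formula (\ref{cov-on-trees}) applied to the path in $G'$ that would have completed the cycle in $G$.

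The main obstacle is the sign bookkeeping: the $+$ or $-$ pin produced by the splitting lemma at each step must agree with the rule ``$+$ iff the cycle-closing edge exceeds the cycle-starting edge'' used in the construction of $T_{\text{SAW}}(G,v)$. This forces a careful alignment of the DFS exploration order in the SAW construction with the order in which the splittings are carried out, and requires verifying that the local Ising identity used to split the edge propagates the intended sign along each cycle. Once this alignment is confirmed, the remainder of the argument is a straightforward combinatorial induction.
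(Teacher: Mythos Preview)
The paper does not give its own proof of Theorem~\ref{Weitz}; it is quoted as a known result from~\cite{Weitz}, so there is no in-paper argument to compare against, only Weitz's original one.

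Your proposal has a genuine gap: the splitting lemma is false as stated. Take $G$ to be the triangle on $\{1,2,3\}$ with $v=1$ and no $y$-conditioning, so that $\PR_G(\sigma_1=+)=\tfrac12$ by symmetry. The unique non-tree edge for the BFS tree rooted at $1$ is $(2,3)$; your surgery turns $G$ into the path $G'$ with edges $(3',1),(1,2),(2,3'')$. Pinning $\sigma_{3''}=+$ one computes directly $\PR_{G'}(\sigma_1=+\mid\sigma_{3''}=+)=\cosh(2\beta)/(\cosh(2\beta)+1)>\tfrac12$, and the $-$ pin gives the complementary value. The pinned pendant injects an external field at $a$ that propagates to $v$; the identity $e^{\beta\sigma_a\sigma_b}=\cosh\beta\,(1+\tanh\beta\,\sigma_a\sigma_b)$ you invoke cannot cancel this because in $G'$ the spin at $b'$ is no longer tied to the spin at $b''$.

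There is also a structural mismatch that would persist even if some corrected splitting identity held. Your surgery adds one pendant per application, so after $\kappa(G)$ iterations you obtain a tree on $|V(G)|+\kappa(G)$ vertices. But $T_{\text{SAW}}(G,v)$ has one vertex for every self-avoiding walk from $v$ and is typically exponentially larger than $G$; already for the triangle it has seven vertices versus four for your $G'$. The SAW tree is not produced by any DFS of $G$. Weitz's actual inductive step splits the \emph{root} $v$ into $\deg(v)$ copies $v_1,\dots,v_d$ (each attached to a single neighbour $u_i$) and uses the telescoping identity
\[
\frac{\PR_G(\sigma_v=+)}{\PR_G(\sigma_v=-)}
\;=\;
\prod_{i=1}^{d}
\frac{\PR_{\hat G}\bigl(\sigma_{v_i}=+\ \big|\ \sigma_{v_j}=+\ (j<i),\ \sigma_{v_j}=-\ (j>i)\bigr)}
     {\PR_{\hat G}\bigl(\sigma_{v_i}=-\ \big|\ \sigma_{v_j}=+\ (j<i),\ \sigma_{v_j}=-\ (j>i)\bigr)},
\]
valid in the split graph $\hat G$. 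Each factor is now a leaf ratio and reduces via one BP step to a ratio at $u_i$ in a strictly smaller instance, with the pinned copies $v_j$ ($j\neq i$) becoming exactly the cycle-closing $\pm$ leaves of $T_{\text{SAW}}$. It is this $d$-fold branching at every recursion that generates the exponential size of the SAW tree.
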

Next, we prove a lemma that allows the bounding of correlations in a tree.
\begin{lem}\label{spin-corr}
Let $\mathcal{T}$ be a tree and let $(\sigma_v)_{v\in V(\mathcal{T})}$ be a configuration drawn from the Ising model on $\mathcal{T}$ with external field $h$. Denote by $\E_h$ the expectation under the Ising model with external field $h$. Then, if $\rho$ is the root of $\mathcal{T}$, $v_1,...,v_k$ are vertices of $\mathcal{T}$ that have degree at most $d'$ in $\mathcal{T}$ and $\E_h(\sigma_\rho)=0$, the following inequality holds:
$$\E_h\left(\sigma_\rho|\sigma_{v_1}=\cdots=\sigma_{v_k}=+\right)\le c(d')\cdot\SL{i=1}^k\E_0\left(\sigma_\rho|\sigma_{v_i}=+\right),$$
for some constant $c(d')$ that depends only on $d'$.
\end{lem}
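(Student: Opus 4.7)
The plan is to reduce the multi-conditioning estimate to a sum of single-conditioning increments by telescoping, and then bound each increment using the classical product formula for covariances on trees. For $E_j = \{\sigma_{v_1}=\cdots=\sigma_{v_j}=+\}$, let $\mu_{j-1}$ denote the Ising measure $\mu_h$ conditioned on $E_{j-1}$ (equivalently, the Ising model on $\mathcal{T}$ with an additional $+\infty$ external field at $v_1,\dots,v_{j-1}$). Writing
\[ \E_h(\sigma_\rho | E_k) - \E_h(\sigma_\rho) = \SL{j=1}^k \bigl[\E_{\mu_{j-1}}(\sigma_\rho|\sigma_{v_j}=+) - \E_{\mu_{j-1}}(\sigma_\rho)\bigr], \]
each summand is nonnegative by the FKG inequality (valid for ferromagnetic Ising with arbitrary external fields). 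Splitting on $\sigma_{v_j}\in\{\pm1\}$ converts the $j$-th summand to $\mathrm{Cov}_{\mu_{j-1}}(\sigma_\rho,\sigma_{v_j})/(1+\E_{\mu_{j-1}}(\sigma_{v_j}))$, so the task reduces to bounding these covariances from above.

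For any Ising measure $\nu$ on $\mathcal{T}$ and any vertex $w$, let $\rho=u_0,u_1,\dots,u_\ell=w$ be the unique path from $\rho$ to $w$. Iterating the Markov property on the tree yields the product identity
\[ \mathrm{Cov}_\nu(\sigma_\rho,\sigma_w) = \frac{\PL{i=0}^{\ell-1}\mathrm{Cov}_\nu(\sigma_{u_i},\sigma_{u_{i+1}})}{\PL{i=1}^{\ell-1}\bigl(1-\E_\nu(\sigma_{u_i})^2\bigr)}. \]
For any edge $(u,u')$, integrating out every spin other than $\sigma_u,\sigma_{u'}$ produces a two-spin ferromagnetic Ising marginal with coupling $\beta$ and some effective external fields; an elementary $\tanh$ identity on this marginal then yields the edge bound $\mathrm{Cov}_\nu(\sigma_u,\sigma_{u'}) \le \tanh(\beta)\bigl(1-\E_\nu(\sigma_{u'})^2\bigr)$. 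Plugging this in at every edge of the path, the intermediate variance factors cancel telescopically, giving $\mathrm{Cov}_\nu(\sigma_\rho,\sigma_w) \le \tanh(\beta)^{\ell}\bigl(1-\E_\nu(\sigma_w)^2\bigr)$, whence
\[ \E_\nu(\sigma_\rho|\sigma_w=+) - \E_\nu(\sigma_\rho) \le \tanh(\beta)^{\ell}\bigl(1-\E_\nu(\sigma_w)\bigr) \le 2\,\E_0(\sigma_\rho|\sigma_w=+). \]

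Applying this with $\nu=\mu_{j-1}$ and $w=v_j$ gives two cases: if some earlier $v_i$ (with $i<j$) lies on the path from $\rho$ to $v_j$, then the conditioning in $\mu_{j-1}$ disconnects them in the remaining forest and the $j$-th increment vanishes; otherwise the path is intact and the increment is at most $2\,\E_0(\sigma_\rho|\sigma_{v_j}=+)$. Summing over $j$ and using $\E_h(\sigma_\rho)=0$ yields the inequality of the lemma, in fact with $c(d')$ a constant that does not depend on $d'$. The main technical step is the edge bound, which reduces to the elementary inequality $(1-H^2)/(1-\tanh^2(\beta)H^2)\le 1$ for $H=\tanh(h'_{u'})$ the effective field at one endpoint, and holds because $|\tanh\beta|<1$. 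Notably, this strategy does not invoke the degree hypothesis on the $v_i$, which suggests that assumption is presumably needed only for a different, more local approach.
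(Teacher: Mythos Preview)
Your proof is correct and in fact sharper than the paper's: you obtain the lemma with $c(d')=2$, independent of $d'$ and of $\beta$, and you do not use the bounded-degree assumption on the $v_i$ at all.

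Both proofs begin with the same telescoping and the identity $\E_\nu(\sigma_\rho\mid\sigma_w=+)-\E_\nu(\sigma_\rho)=\mathrm{Cov}_\nu(\sigma_\rho,\sigma_w)/(1+\E_\nu(\sigma_w))$. From there the paper invokes the Ding--Song--Sun correlation inequality (Theorem~\ref{D-S-S}) to bound $\mathrm{Cov}_{h_{i-1}}(\sigma_\rho,\sigma_{v_i})\le\mathrm{Cov}_0(\sigma_\rho,\sigma_{v_i})$, and then uses the degree bound on $v_i$ to get $p_i\ge(1+e^{2\beta d'})^{-1}$, yielding $c(d')=(1+e^{2\beta d'})/2$. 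Your route is more elementary and more self-contained: the tree product formula for covariances together with the two-spin edge estimate $\mathrm{Cov}_\nu(\sigma_u,\sigma_{u'})\le\tanh(\beta)\bigl(1-\E_\nu(\sigma_{u'})^2\bigr)$ telescopes to $\mathrm{Cov}_\nu(\sigma_\rho,\sigma_w)\le\tanh(\beta)^\ell\bigl(1-\E_\nu(\sigma_w)^2\bigr)$. The crucial point is that this bound carries the variance factor $1-\E_\nu(\sigma_w)^2=4p_j(1-p_j)$, so the $p_j$ in the denominator cancels and no lower bound on $p_j$ (hence no degree hypothesis) is needed. One minor slip: in your edge-bound computation the parameter $H$ should be the effective field at the \emph{other} endpoint (i.e.\ at $u$ when you divide by $1-\E_\nu(\sigma_{u'})^2$), not at $u'$; this does not affect the argument. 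The case split when some earlier $v_i$ lies on the path is handled correctly, since conditioning on $\sigma_{v_i}$ makes $\sigma_\rho$ and $\sigma_{v_j}$ independent by the tree Markov property.
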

In the proof of Lemma \ref{spin-corr}, we will make use of the following result from \cite{D-S-S}.
\begin{thm}[Corollary 1.3, \cite{D-S-S}]\label{D-S-S}
Let $g:V\to[-\infty,\infty]$ be an external field. Then, for all $u,v\in V$
\begin{equation}\label{0-max-cov}
\E_g(\sigma_u\sigma_v)-\E_g(\sigma_u)\E_g(\sigma_v)\le\E_0(\sigma_u\sigma_v).
\end{equation}
\end{thm}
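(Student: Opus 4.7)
The plan is to bound $\E_h(\sigma_\rho \mid E_k)$, where $E_i := \{\sigma_{v_1}=\cdots=\sigma_{v_i}=+\}$, by telescoping the conditioning one vertex at a time and estimating each increment with Theorem~\ref{D-S-S}. Using the hypothesis $\E_h(\sigma_\rho)=0$, we may write
$$\E_h(\sigma_\rho\mid E_k) = \SL{i=1}^k \Delta_i, \qquad \Delta_i := \E_h(\sigma_\rho\mid E_i) - \E_h(\sigma_\rho\mid E_{i-1}),$$
where $E_0$ denotes the full sample space. Each $\Delta_i\ge 0$ by the FKG inequality: the Ising model is positively associated and both $\sigma_\rho$ and $\mathbf{1}_{\sigma_{v_i}=+}$ are increasing functions of the configuration. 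It therefore suffices to bound each $\Delta_i$ from above.

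I would rewrite $\Delta_i$ as a single-conditioning quantity under the measure $\mu_{i-1} := \mu_h(\cdot\mid E_{i-1})$, itself an Ising model obtained from $\mu_h$ by setting the external field at $v_1,\ldots,v_{i-1}$ to $+\infty$. Writing $p := \PR_{\mu_{i-1}}(\sigma_{v_i}=+)$ and $A^{\pm} := \E_{\mu_{i-1}}(\sigma_\rho\mid\sigma_{v_i}=\pm)$, the identity $\E_{\mu_{i-1}}(\sigma_\rho) = pA^+ + (1-p)A^-$ gives
$$\Delta_i = A^+ - \E_{\mu_{i-1}}(\sigma_\rho) = (1-p)(A^+ - A^-).$$

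The crux of the argument is showing $A^+ - A^- \le 2\E_0(\sigma_\rho\sigma_{v_i})$ via Theorem~\ref{D-S-S}; this is the step where I expect the main obstacle, since Theorem~\ref{D-S-S} only controls $\mathrm{Cov}(\sigma_\rho,\sigma_{v_i}) = 2p(1-p)(A^+ - A^-)$, and dividing by $2p(1-p)$ is useless when $p$ is close to $0$ or $1$. The key observation that overcomes this is that $A^\pm$ do not depend on $h(v_i)$: once $\sigma_{v_i}$ is pinned, the weight $h(v_i)\sigma_{v_i}$ in the Boltzmann factor becomes a multiplicative constant that cancels in normalization. I would therefore replace $h(v_i)$ by a real value $h^{\ast}(v_i)$ so that, in the modified measure $\tilde\mu_{i-1}$, the marginal satisfies $\PR_{\tilde\mu_{i-1}}(\sigma_{v_i}=+)=\tfrac12$; such a value exists by continuity and monotonicity of this marginal in $h(v_i)$. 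Applying Theorem~\ref{D-S-S} to $\tilde\mu_{i-1}$ yields $\mathrm{Cov}_{\tilde\mu_{i-1}}(\sigma_\rho,\sigma_{v_i})\le \E_0(\sigma_\rho\sigma_{v_i})$, and since this covariance equals $2\cdot\tfrac12\cdot\tfrac12\cdot(A^+ - A^-) = (A^+ - A^-)/2$, we conclude $A^+ - A^-\le 2\E_0(\sigma_\rho\sigma_{v_i})$; the bound transfers back to $\mu_{i-1}$ because $A^\pm$ are invariant under the modification.

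Combining with $(1-p)\le 1$ and the identity $\E_0(\sigma_\rho\sigma_{v_i}) = \E_0(\sigma_\rho\mid\sigma_{v_i}=+)$ (from the $\pm$-symmetry of the zero-field Ising model on a tree), each $\Delta_i \le 2\E_0(\sigma_\rho\mid\sigma_{v_i}=+)$, and summing over $i$ yields the lemma with $c(d')=2$. A minor technical point is that the pinnings correspond to external fields $+\infty$; this is handled by a standard finite-truncation limit using continuity of Ising expectations in $h$, together with the fact that the Ising FKG property passes to such limits. It is worth remarking that this approach produces a universal constant and does not invoke the degree hypothesis on the $v_i$, suggesting that the dependence of $c$ on $d'$ in the statement is not essential for the inequality as written.
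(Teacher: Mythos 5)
Your proposal does not prove the statement it is attached to. Theorem~\ref{D-S-S} is the Ding--Song--Sun covariance inequality, which the paper imports verbatim from~\cite{D-S-S} and does not reprove; what you have written is a proof of Lemma~\ref{spin-corr}, and Theorem~\ref{D-S-S} appears in it only as a black box. I will therefore compare your argument with the paper's proof of Lemma~\ref{spin-corr}.

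Up to and including the telescoping decomposition, the identification of the increment $\Delta_i = (1-p)(A^+-A^-)$, and the invocation of FKG to get $A^+\ge A^-$, your argument coincides with the paper's. The divergence is in how each increment is compared to the zero-field quantity. The paper writes $\Delta_i = \frac{1}{2p_i}\,\mathrm{Cov}_{h_{i-1}}(\sigma_\rho,\sigma_{v_i})$, applies Theorem~\ref{D-S-S} directly, and then needs the lower bound $p_i\ge(1+e^{2\beta d'})^{-1}$, which is where the degree hypothesis on the $v_i$ enters and where the constant $c(d')=\tfrac12(1+e^{2\beta d'})$ comes from. Your route instead observes that the conditional means $A^\pm$ are insensitive to the value of $h(v_i)$, so one may retune $h(v_i)$ to a real $h^*(v_i)$ making $\PR(\sigma_{v_i}=+)=\tfrac12$ in the retuned measure, apply Theorem~\ref{D-S-S} there to get $\tfrac12(A^+-A^-)\le\E_0(\sigma_\rho\sigma_{v_i})$, and transfer back because $A^\pm$ did not change. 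Combined with $1-p\le 1$ and the zero-field symmetry identity $\E_0(\sigma_\rho\sigma_{v_i})=\E_0(\sigma_\rho\mid\sigma_{v_i}=+)$, this yields $c=2$, with no dependence on the degree of the $v_i$. The retuning step is legitimate (the marginal of $\sigma_{v_i}$ is a continuous, strictly increasing function of $h(v_i)$ ranging over $(0,1)$, so a fixing value exists, and the pinnings with $h=+\infty$ are handled by the usual truncation-and-limit argument you mention), and I do not see a gap. Your version is genuinely sharper and cleaner: it removes the degree hypothesis from the lemma entirely and replaces a $d'$-dependent constant with a universal one. This is a real improvement over the paper's argument for this lemma, not merely a cosmetic variant, and is worth recording. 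That said, it does not touch Theorem~\ref{D-S-S} itself -- if you were asked for a proof of that inequality, you would need to reconstruct the Ding--Song--Sun argument (or an equivalent Griffiths/GHS-style correlation-inequality derivation), which is a different and substantially harder task.
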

\begin{proof}[Proof of Lemma \ref{spin-corr}]
In what follows, when the external field is omitted, it is assumed to be $h$. We can write
\begin{align*}
\E\left(\sigma_\rho|\sigma_{v_1}=\cdots=\sigma_{v_k}=+\right)&=\SL{i=1}^k\left[\E\left(\sigma_\rho|\sigma_{v_1}=\cdots=\sigma_{v_i}=+\right)-\E\left(\sigma_\rho|\sigma_{v_1}=\cdots=\sigma_{v_{i-1}}=+\right)\right]\\&=\SL{i=1}^k\left[\E_{h_{i-1}}\left(\sigma_\rho|\sigma_{v_i}=+\right)-\E_{h_{i-1}}\left(\sigma_\rho\right)\right],
\end{align*}
where $h_{i-1}$ is the new external field coming from conditioning on $\{\sigma_{v_1}=\cdots=\sigma_{v_{i-1}}=+\}$ as well. We now calculate and bound each term of the above sum separately. For convenience, let $p_i=\PR_{h_{i-1}}(\sigma_{v_i}=+), m_i^\pm=\E_{h_{i-1}}(\sigma_\rho|\sigma_{v_i}=\pm)$. Of course, $m_i^+\ge m_i^-$. Also, if $u_1,\dots,u_s$ are the neighbors of $v_i$, 
\begin{equation}\label{p_i-ineq}
p_i\ge\PR(v_i=+|\ u_1=\cdots=u_s=-)\ge\dfrac{1}{1+e^{2\beta d'}}.
\end{equation}
Since $\E_{h_{i-1}}(\sigma_\rho)=p_im_i^++(1-p_i)m_i^-$,
\begin{align*}
\E_{h_{i-1}}(\sigma_\rho|\sigma_{v_i}=+)-\E_{h_{i-1}}(\sigma_\rho)=(1-p_i)(m_i^+-m_i^-)
\end{align*}
and
\begin{align*}
\text{Cov}_{h_{i-1}}(\sigma_\rho,\sigma_{v_i})&=\E_{h_{i-1}}(\sigma_\rho\sigma_{v_i})-\E_{h_{i-1}}(\sigma_\rho)\E_{h_{i-1}}(\sigma_{v_i})\\&=p_im_i^+-(1-p_i)m_i^--(2p_i-1)(p_im_i^++(1-p_i)m_i^-)\\&=2p_i(1-p_i)(m_i^+-m_i^-).
\end{align*}
Therefore,
\begin{align*}
\E_{h_{i-1}}\left(\sigma_\rho|\sigma_{v_i}=+\right)-\E_{h_{i-1}}\left(\sigma_\rho\right)
&=\frac1{2p_i}\cdot\text{Cov}_{h_{i-1}}(\sigma_\rho,\sigma_{v_i})\\
&\le \frac1{2p_i}\cdot\text{Cov}_0(\sigma_\rho,\sigma_{v_i})=\frac1{2p_i}\cdot\E_0(\sigma_\rho|\sigma_{v_i}=+),
\end{align*}
where the inequality is by Theorem~\ref{D-S-S}.\ Due to equation (\ref{p_i-ineq}), the statement of the Lemma is true with $c(d')=\frac{1+e^{2\beta d'}}{2}$.
\end{proof}

\section{Proof of Theorem \ref{main-thm-determ}}
In this section, we prove Theorem \ref{main-thm-determ}. Throughout the proof, unless stated otherwise, we assume that for the graph $G$, conditions 1-4 of Theorem \ref{main-thm-determ} are satisfied and that $A,B$ and $H$  denote the sets defined therein.

\subsection{Polynomial Relaxation on $H$}

For vertices $u,v\in V$ that are connected, let $\beta_{u,v}^{(1)}=0$, if $u,v\in B$, or $\beta_{u,v}^{(1)}=\beta_c$ otherwise. The measure $\mu_{\beta,h}^{(1)}$ is defined to be the Ising Model on $G$ with these exact interactions, i.e. the measure on $\{-1,1\}^V$ which satisfies
$$\mu_{\beta,h}^{(1)}(\sigma)\varpropto\exp\left(\SL{u\sim v}\beta_{u,v}^{(1)}\sigma_u\sigma_v+\SL{v\in V}h(v)\sigma_v\right).$$
Let $(Y_t^{(1)})_{t\ge0}$ be the continuous time Glauber Dynamics for this measure and the chain $(\tilde{Y}_t^{(1)})_{t\ge0}$ be the projection of that chain on $B$. Also, for a configuration $\sigma_0\in\{-1,1\}^V$, let $(Y_t^{(2)})_{t\ge0}$ the Glauber Dynamics on $A$, for which $Y_0^{(2)}=\sigma_0$ and each vertex in $A$ is updated at rate 1, given the configuration $\sigma_{0,B}$. 
\begin{lem}\label{poly-relax}
The relaxation times of $(\tilde{Y}_t^{(1)})_{t\ge0}$ and $(Y_t^{(2)})_{t\ge0}$ are both polynomial, i.e. there exists a constant $D_2>0$ such that for any $\sigma_0\in\{-1,1\}^V$,
$$\text{gap}(\tilde{Y}_t^{(1)})\ \ \text{and}\ \ \text{gap}(Y_t^{(2)})\ge n^{-D_2}.$$
\end{lem}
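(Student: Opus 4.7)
The plan is to exploit the product structure of the modified measure $\mu^{(1)}$ to decouple both chains into independent Glauber dynamics on small, tree-like components, on each of which the polynomial relaxation bound of \cite{Mossel-Sly} applies. Since $\mu^{(1)}$ places zero interaction on every $B$-$B$ edge and interaction $\beta_c$ on every edge of $H$, it factorizes as a product over the connected components $H_1,\ldots,H_K$ of $H$, tensored with uniform spins on the $B$-vertices with no neighbor in $A$. Consequently, $(Y_t^{(1)})$ splits into independent Glauber chains on each $V(H_i)$, so its spectral gap equals the minimum of the component-wise gaps. Similarly, $(Y_t^{(2)})$ decouples into independent Glauber chains on the connected components $A_1,A_2,\ldots$ of the subgraph of $H$ induced on $A$, each with an effective external field on its boundary vertices coming from $\sigma_{0,B}$.

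By Condition 1, each $H_i$ has tree excess at most $1$, so it is either a tree or a unicyclic graph. By Condition 2, the sum of degrees along any self-avoiding path in $H$ is at most $C\log n$; in particular every vertex of $H$ has degree $O(\log n)$, and the diameter of each $H_i$ is $O(\log n)$ (since every interior vertex of a self-avoiding path contributes at least $2$ to the degree sum). Each $A_j$, being an induced subgraph of some $H_i$, inherits tree excess at most $1$, max degree $O(\log n)$, and diameter $O(\log n)$.

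For each $H_i$, remove the at-most-one extra edge to obtain a spanning tree $T_i$ of max degree $O(\log n)$ and diameter $O(\log n)$. Proposition~4 of \cite{Mossel-Sly} supplies a polynomial lower bound $n^{-D'}$ on the spectral gap of Glauber dynamics on $T_i$ at $\beta_c$, uniformly over external fields of magnitude $O(\log n)$. Reinstating the removed edge modifies $\mu^{(1)}|_{V(H_i)}$ pointwise by a factor in $[e^{-2\beta_c},e^{2\beta_c}]$, so a standard Dirichlet form comparison shows the spectral gap changes by at most a multiplicative constant. The same reasoning applied to each $A_j$, where the boundary-induced external fields on $A$ have magnitude at most $\beta_c\cdot O(\log n)$ per vertex, yields an analogous bound uniform in $\sigma_0$.

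Taking the minimum over components gives $\text{gap}(Y_t^{(1)}),\text{gap}(Y_t^{(2)})\ge n^{-D_2}$ for a suitable $D_2$. For $(\tilde Y_t^{(1)})$, interpreted via the variational characterization of the spectral gap restricted to functions of $\sigma_B$, any such test function is also a valid test function for $\text{gap}(Y_t^{(1)})$ with the same Dirichlet form and variance; hence $\text{gap}(\tilde Y_t^{(1)})\ge\text{gap}(Y_t^{(1)})\ge n^{-D_2}$. The main obstacle is verifying Proposition~4 of \cite{Mossel-Sly} yields a polynomial bound in this setting, where the trees may be of polynomial size with max degree and diameter $O(\log n)$ and boundary fields of magnitude up to $O(\log n)$; once this input is in hand, the decoupling and edge-comparison steps are routine.
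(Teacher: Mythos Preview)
Your approach is correct and is essentially the same as the paper's: factorize $\mu^{(1)}$ over the connected components of $H$, invoke Proposition~4 of \cite{Mossel-Sly} on each component, and then deduce the bound for $\tilde{Y}^{(1)}$ from that for $Y^{(1)}$ by lifting test functions on $\{-1,1\}^B$ to $\{-1,1\}^V$. The final step, in particular, is verbatim the paper's argument.

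The ``main obstacle'' you flag, however, is not an obstacle at all, and your detours complicate the argument unnecessarily. Proposition~4, as quoted in the paper, bounds $t_{\text{rel}}\le\exp\bigl(4\beta(m+s)\bigr)$ for a graph of tree excess $s$ whose spanning tree satisfies $m=\max_\gamma\sum_{v\in\gamma}\deg(v)$, and this holds \emph{uniformly over all boundary conditions and external fields}. Condition~2 is precisely the statement $m\le C\log n$, and Condition~1 gives $s\le 1$; plugging in yields $t_{\text{rel}}\le\exp\bigl(4\beta(C\log n+1)\bigr)=n^{O(1)}$ directly on each $H_i$ and on each $A_j$, with no need to delete the extra edge and reinstate it by Dirichlet-form comparison, and no need to bound the magnitude of the boundary field induced by $\sigma_{0,B}$. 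Be careful that splitting Condition~2 into ``max degree $O(\log n)$'' and ``diameter $O(\log n)$'' is strictly weaker: using only those two facts gives $m=O(\log^2 n)$ and hence only a quasi-polynomial bound. You must feed the sum bound $\sum_{v\in\gamma}\deg(v)\le C\log n$ from Condition~2 directly into Proposition~4.
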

In order to prove the lemma, we will use Proposition 4 of \cite{Mossel-Sly}, which we state below:
\begin{prop}[Proposition 4 of \cite{Mossel-Sly}]\label{prop-4-MS}
Let $G$ be a graph on $r$ vertices and $r+s-1$ edges that has a spanning tree $T$ which satisfies
$$m(T):=\max\limits_\gamma\SL{v\in\gamma}\deg(v)=m,$$
where $\gamma$ runs over all paths originating at the root of $T$. Then, the relaxation time of the Glauber Dynamics on $G$ with any boundary conditions and external fields satisfies
$$t_{\text{rel}}\le\exp\left(4\beta(m+s)\right).$$
\end{prop}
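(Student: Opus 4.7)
My plan is to handle the $s$ non-tree edges by a perturbation/comparison argument that reduces the problem to bounding the relaxation time on the spanning tree $T$, and then to bound the tree relaxation time by a canonical paths argument tailored to the tree structure.

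\textbf{Step 1: Reduction to the spanning tree.} Let $e_1,\dots,e_s$ denote the non-tree edges of $G$ and let $\mu_G,\mu_T$ be the Ising measures on $G$ and $T$ under the same boundary conditions and external field. The Hamiltonians differ only through the contribution $\beta\sum_{i=1}^s \sigma_{u_i}\sigma_{v_i}\in[-\beta s,\beta s]$, so the density ratio $\mu_G/\mu_T$ lies in $[e^{-2\beta s},e^{2\beta s}]$. A Holley--Stroock comparison of Dirichlet forms then yields $\mathrm{gap}(G)\ge e^{-4\beta s}\mathrm{gap}(T)$. It therefore suffices to show that for any Ising measure on a tree $T$ (with arbitrary external field, which can also encode boundary conditions), $t_{\mathrm{rel}}(T)\le e^{4\beta\, m(T)}$.

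\textbf{Step 2: Canonical paths on the tree.} Root $T$ at the vertex of the path achieving $m(T)$ and fix a DFS enumeration $v_1=\rho,v_2,\dots,v_r$ of its vertices so that each $v_i$'s parent appears before $v_i$. For each pair $(\sigma,\tau)$, define the canonical path $\gamma_{\sigma\tau}=(\eta_0,\dots,\eta_r)$ by $\eta_0=\sigma$ and $\eta_i=\eta_{i-1}^{\oplus v_i}$ when $\sigma(v_i)\neq\tau(v_i)$, else $\eta_i=\eta_{i-1}$. Each step is either a legal single-spin Glauber transition or an idle step. Sinclair's Poincar\'e inequality gives
$$t_{\mathrm{rel}}\le\max_{\eta\to\eta^{\oplus v_i}}\frac{1}{\mu(\eta)q(\eta,\eta^{\oplus v_i})}\sum_{(\sigma,\tau):\gamma_{\sigma\tau}\ni(\eta,\eta^{\oplus v_i})}|\gamma_{\sigma\tau}|\,\mu(\sigma)\mu(\tau).$$

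\textbf{Step 3: Congestion bound.} For a fixed edge $\eta\to\eta^{\oplus v_i}$, the pairs $(\sigma,\tau)$ whose canonical path uses it are precisely those for which $\tau$ coincides with $\eta$ on $\{v_1,\dots,v_{i-1}\}$ and $\sigma$ coincides with $\eta$ on $\{v_i,\dots,v_r\}$; the remaining coordinates are free. Summing over the free coordinates factors the inner sum into two marginal probabilities, one on the "past" $\{v_1,\dots,v_{i-1}\}$ and one on the "future" subtree rooted at $v_i$. The tree Markov property is decisive: conditioning on the value of $\eta$ at $v_i$ decouples past from future, and the ratio $\mu(\sigma\text{-marg})\mu(\tau\text{-marg})/\mu(\eta)$ telescopes along the unique $\rho$-to-$v_i$ path in $T$. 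Each step of the telescope is controlled by a conditional Gibbs ratio at a single vertex, which since changing one spin only affects the edges incident to that vertex is bounded by $e^{2\beta\deg_T(v)}$. Collecting, the congestion is at most $\exp\!\bigl(4\beta\sum_{v\in\mathrm{path}(\rho,v_i)}\deg_T(v)\bigr)\le e^{4\beta m}$, and combining with Step~1 yields the desired $e^{4\beta(m+s)}$ bound.

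\textbf{Main obstacle.} The crux is Step~3: a naive bound on $\mu(\sigma)\mu(\tau)/\mu(\eta)$ is exponential in the full tree size, so one must exploit the Markov structure of the Ising model on a tree to localize the cost to a single root-to-$v_i$ path, with off-path subtrees contributing unit factors after the conditional decoupling. Getting the exponent to be exactly the sum of degrees along this single path (and not over the whole tree) is the delicate heart of the argument and is precisely what produces the quantity $m(T)$ in the final bound, independently of $r$.
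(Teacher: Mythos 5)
The paper does not prove this proposition; it is cited verbatim from \cite{Mossel-Sly}, where (following \cite{BCMP} and Martinelli's recursive analysis of trees) it is obtained by a recursive block-dynamics decomposition of the spectral gap down the tree, not by canonical paths. So there is no internal proof to compare against, and your argument must be assessed on its own terms — and it has a real gap.

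Your Step~1 is sound: the $s$ non-tree edges shift $\log\mu$ by at most $2\beta s$, giving $\mathrm{gap}(G)\ge e^{-4\beta s}\,\mathrm{gap}(T)$. The structural observation underlying Step~3 is also the right one: in a DFS enumeration of $T$, every edge crossing the cut between $\{v_1,\dots,v_{i-1}\}$ and $\{v_i,\dots,v_r\}$ is incident to an ancestor of $v_i$, so with the standard bijection $(\sigma,\tau)\mapsto\zeta$ (take $\zeta=\sigma$ on the past, $\zeta=\tau$ on the future) one gets $\mu(\sigma)\mu(\tau)/\bigl(\mu(\eta)\mu(\zeta)\bigr)\le\exp\bigl(4\beta\sum_{u\in\mathrm{path}(\rho,v_i)}\deg_T(u)\bigr)\le e^{4\beta m}$; your appeal to ``marginals'' and ``telescoping'' is a loose description of exactly this.

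The gap is in Step~2. The canonical-paths Poincar\'e inequality you invoke carries an irreducible factor of $\max_{\sigma,\tau}|\gamma_{\sigma\tau}|$, which for single-spin-flip paths is up to $r$, and there is a further factor $q(\eta,\eta^{\oplus v_i})^{-1}$, which for arbitrary external fields is not uniformly bounded. Your Step~3 silently drops both: the congestion you compute is really of the form $r\cdot q^{-1}\cdot e^{4\beta m}$, not $e^{4\beta m}$. The factor $r$ is not absorbed by the exponential. For a rooted $b$-ary tree of depth $h$ at a fixed small $\beta$ one has $r=\Theta(b^h)$, $m=\Theta(h)$, $s=0$, so for $\beta$ small enough $r\gg e^{4\beta(m+s)}$, and hence a bound of the shape $r\cdot e^{4\beta m}$ does not prove the asserted $t_{\mathrm{rel}}\le e^{4\beta(m+s)}$. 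This is not an accounting slip in your write-up but a structural limitation: canonical paths of length $\Theta(r)$ always pay a path-length penalty, and the $r$-free bound is exactly what the recursive (tensorised over subtrees) gap decomposition of \cite{BCMP} and \cite{Mossel-Sly} achieves and what the method you chose cannot.
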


\begin{proof}[Proof of Lemma \ref{poly-relax}]
We use Proposition 4 of \cite{Mossel-Sly}. The graph $H$ may not be connected, but the Glauber Dynamics on $H$ is the product of the Glauber Dynamics on each one of the connected components. Therefore, it suffices to prove that the relaxation time is polynomial on each one of the components. However, because of conditions 1 and 2 in Theorem \ref{main-thm-determ}, we know that $m\le C\log(n)$ and $s\le1$ in each component. Therefore, Proposition \ref{prop-4-MS} implies that both $(Y_t^{(1)})_{t\ge0}$ and $(Y_t^{(2)})_{t\ge0}$ have polynomial relaxation times. As far as $(\tilde{Y}_t^{(1)})_{t\ge0}$ is concerned, we prove that
$$\text{gap}(\tilde{Y}_t^{(1)})\ge\text{gap}(Y_t^{(1)}).$$
Indeed, consider the function $g:\{-1,1\}^B\to\R$ which minimizes $\tilde{\mathcal{E}}^{(1)}(g,g)/\text{Var}(g)$ (this minimum is equal to the spectral gap of $(\tilde{Y}_t^{(1)})_{t\ge0}$). Then, the function $f:\{-1,1\}^V\to\R$ defined as $f(\sigma_A,\sigma_B)=g(\sigma_B)$ has
\begin{align*}
\mathcal{E}^{(1)}(f,f):&=\SL{\sigma\in\{-1,1\}^V}\SL{v\in V}\mu_\beta^{(1)}(\sigma)\cdot q^{(1)}(\sigma,\sigma^{\oplus v})\cdot(f(\sigma)-f(\sigma^{\oplus v}))^2
\\&=\SL{\sigma_B\in\{-1,1\}^B}\SL{v\in B}\ (g(\sigma_B)-g(\sigma_B^{\oplus v}))^2\SL{\sigma_A\in\{-1,1\}^A}\mu_\beta^{(1)}(\sigma)\cdot q^{(1)}(\sigma,\sigma^{\oplus v})\\&=\SL{\sigma_B\in\{-1,1\}^B}\SL{v\in B}\tilde{\mu}_\beta^{(1)}(\sigma_B)\cdot\tilde{q}^{(1)}(\sigma_B,\sigma_B^{\oplus v})\cdot(g(\sigma_B)-g(\sigma_B^{\oplus v}))^2
\\&=\tilde{\mathcal{E}}^{(1)}(g,g)
\end{align*}
and similarly,
$$\text{Var}_{\mu_\beta^{(1)}}(f)=\text{Var}_{\tilde{\mu}_\beta^{(1)}}(g).$$
Therefore, $f$ achieves the same ratio, $\mathcal{E}^{(1)}(f,f)/\text{Var}(f)=\tilde{\mathcal{E}}^{(1)}(g,g)/\text{Var}(g)$ which completes the proof.
\end{proof}
\subsection{Reduction to the Restricted Dynamics}
In this subsection, we consider a series of Markov chains, all defined in continuous time.
\begin{itemize}
\item 
$(X_t^{(1)})_{t\ge0}$ is the usual Glauber dynamics on the graph, in which every vertex has its spin updated at rate 1.
\item 
$(X_t^{(2)})_{t\ge0}$ is the accelerated dynamics, in which vertices in $A$ are updated at rate $n^{4D_2+4D_3+4}$, where $D_2$ is the exponent of Lemma \ref{poly-relax}, $D_3$ is the exponent of Lemma \ref{comp5-6} and vertices in $B$ are updated at rate 1.
\item 
$(X_t^{(3)})_{t\ge0}$ is the Markov Chain in which at rate $|B|$, there is an update of the spin at a uniformly random chosen vertex in $B$, and then there is a full update on the vertices of $A$ according to the Ising Model, given the configuration on $B$.
\item 
$(X_t^{(4)})_{t\ge0}$ is the projection of $(X_t^{(3)})_{t\ge0}$ on the set $B$.
\item 
$(X_t^{(5)})_{t\ge0}$ is the Restricted Dynamics Markov Chain on $B$, i.e. the continuous time Glauber Dynamics for the measure $\nu$ defined as
$$\nu(x_B)=\PR_{\mu_\beta}(\sigma_B=x_B),\ \text{for each}\ x_B\in\{-1,1\}^B.$$
\end{itemize}
Our next goal is to prove a series of comparisons between these chains that will be useful in the proof of Theorem \ref{main-thm-determ}.
\begin{lem}\label{comp1-2}
The spectral gaps of chains $(X_t^{(1)})_{t\ge0}$ and $(X_t^{(2)})_{t\ge0}$ satisfy
$$\text{gap}(X_t^{(1)})\ge n^{-4D_2-4D_3-4}\cdot\text{gap}(X_t^{(2)}).$$
\end{lem}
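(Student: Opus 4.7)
The plan is to exploit the variational characterization of the spectral gap, namely
\[
\mathrm{gap}(X_t^{(i)}) \;=\; \inf_{f\ne c}\frac{\mathcal{E}^{(i)}(f,f)}{\mathrm{Var}_{\mu_\beta}(f)},
\]
together with the observation that both chains are reversible with respect to the same measure $\mu_\beta$. Indeed, each chain flips one spin at a time, and the acceptance ratio $\tfrac{\mu_\beta(\sigma^{\oplus v})}{\mu_\beta(\sigma)+\mu_\beta(\sigma^{\oplus v})}$ appearing in the Glauber rates is the same; only the overall speed at vertices of $A$ differs, being $n^{4D_2+4D_3+4}$ in $(X_t^{(2)})$ versus $1$ in $(X_t^{(1)})$. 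This immediately implies the pointwise rate comparison $q^{(2)}(\sigma,\sigma^{\oplus v}) \le n^{4D_2+4D_3+4}\cdot q^{(1)}(\sigma,\sigma^{\oplus v})$ for every edge $(\sigma,\sigma^{\oplus v})$ of the hypercube (with equality when $v\in B$).

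First I would write out each Dirichlet form by splitting the vertex sum over $A$ and $B$:
\[
\mathcal{E}^{(i)}(f,f)=\tfrac{1}{2}\!\!\SL{\sigma,v\in A}\!\!\mu_\beta(\sigma)\,q^{(i)}(\sigma,\sigma^{\oplus v})(f(\sigma)-f(\sigma^{\oplus v}))^2 + \tfrac{1}{2}\!\!\SL{\sigma,v\in B}\!\!\mu_\beta(\sigma)\,q^{(i)}(\sigma,\sigma^{\oplus v})(f(\sigma)-f(\sigma^{\oplus v}))^2.
\]
The $B$-sums coincide for $i=1,2$, while the $A$-sum for $i=2$ is exactly $n^{4D_2+4D_3+4}$ times the $A$-sum for $i=1$. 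Consequently
\[
\mathcal{E}^{(2)}(f,f) \;\le\; n^{4D_2+4D_3+4}\cdot \mathcal{E}^{(1)}(f,f)
\]
for every $f:\{-1,1\}^V\to\R$. Since $\mathrm{Var}_{\mu_\beta}(f)$ is the same for both chains, dividing and taking the infimum over non-constant $f$ yields
\[
\mathrm{gap}(X_t^{(2)}) \;\le\; n^{4D_2+4D_3+4}\cdot \mathrm{gap}(X_t^{(1)}),
\]
which is the claimed inequality after rearrangement.

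There is no real obstacle here; the lemma is essentially a bookkeeping exercise. The only thing to verify carefully is that $(X_t^{(2)})$ really is reversible with respect to $\mu_\beta$: the detailed balance condition $\mu_\beta(\sigma)q^{(2)}(\sigma,\sigma^{\oplus v})=\mu_\beta(\sigma^{\oplus v})q^{(2)}(\sigma^{\oplus v},\sigma)$ holds because multiplying \emph{both} sides by the common factor $n^{4D_2+4D_3+4}$ when $v\in A$ preserves the equality inherited from $(X_t^{(1)})$. The main point of the lemma is qualitative: accelerating the $A$-vertices by any polynomial factor costs us at most the same polynomial factor in the spectral gap, which is a negligible price given that subsequent lemmas will pay off by showing that in the accelerated chain the configuration on $A$ is effectively equilibrated between successive $B$-updates.
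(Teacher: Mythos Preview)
Your proof is correct and follows essentially the same approach as the paper: both compare the Dirichlet forms directly, using that the two chains share the stationary distribution $\mu_\beta$ and that the transition rates satisfy $q^{(2)}(\sigma,\sigma^{\oplus v})\le n^{4D_2+4D_3+4}q^{(1)}(\sigma,\sigma^{\oplus v})$ pointwise, whence $\mathcal{E}^{(2)}(f,f)\le n^{4D_2+4D_3+4}\mathcal{E}^{(1)}(f,f)$ for all $f$. Your added remarks on reversibility and the qualitative interpretation are fine but not needed for the argument.
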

\begin{lem}\label{comp2-3}
Let $\sigma_0\in\{-1,1\}^V$ be the initial condition for $(X_t^{(2)})_{t\ge0}$. Consider the (random) initial condition $\sigma_0^*$ for $(X_t^{(3)})_{t\ge0}$, in which $\sigma_{0,B}^*=\sigma_{0,B}$ (i.e. $\sigma_0^*$ has the same spins at $B$ as $\sigma_0$) and $\sigma_{0,A}^*$ is drawn from the Ising model on $A$ given $\sigma_{0,B}$. Then, the heat kernels of the Markov Chains $(X_t^{(2)})_{t\ge0}$ and $(X_t^{(3)})_{t\ge0}$ satisfy
\begin{align}\label{TV-2-3}
\sup\limits_{\sigma_0\in\{-1,1\}^V}\lVert H_{n^{D_2+D_3+2}}^{(2)}(\sigma_0,\cdot)-H_{n^{D_2+D_3+2}}^{(3)}(\sigma_0^*,\cdot)\rVert_{\text{TV}}=o(1),
\end{align}
as $n\to\infty$.
\end{lem}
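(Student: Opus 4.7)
The plan is to couple the two processes so that their $B$-trajectories remain identical on $[0,T]$ and their $A$-configurations agree at time $T$, with probability $1-o(1)$. Chain $(X_t^{(3)})$ has the defining property that, immediately after every $B$-update, its $\sigma_A$ is an exact draw from $\mu_\beta(\cdot\mid\sigma_B)$, while chain $(X_t^{(2)})$ runs Glauber on $A$ at the accelerated rate $n^{4D_2+4D_3+4}$, whose conditional stationary measure given $\sigma_B$ is the same $\mu_\beta(\cdot\mid\sigma_B)$; by Lemma \ref{poly-relax} its spectral gap is at least $n^{3D_2+4D_3+4}$. Applying Theorem \ref{rel-spec-mix} together with the crude bound $\log(1/\pi_{\min})=O(n)$, the accelerated $A$-chain comes within total variation $e^{-n}$ of $\mu_\beta(\cdot\mid\sigma_B)$ after any time $\Delta^*:=Cn^{-3D_2-4D_3-3}$, uniformly in the starting configuration. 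Because $\Delta^*$ is polynomially smaller than the typical inter-$B$-update gap $1/|B|\ge 1/n$, at every $B$-update time chain $(X_t^{(2)})$'s $\sigma_A$ is essentially indistinguishable from chain $(X_t^{(3)})$'s freshly-resampled $\sigma_A$.

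Concretely, take one Poisson process of rate $|B|$ on $[0,T]$ with $T=n^{D_2+D_3+2}$, giving times $\tau_1<\cdots<\tau_N$ (so $\E[N]\le|B|T\le n^{D_2+D_3+3}$), vertices $v_i\in B$, and uniforms $U_i$ for the $B$-update decisions, all shared between the two chains. Between $B$-updates, chain $(X_t^{(2)})$ evolves $\sigma_A$ via accelerated Glauber while chain $(X_t^{(3)})$ keeps $\sigma_A$ frozen. To construct the coupling, defer chain $(X_t^{(3)})$'s $A$-resample $\eta_i\sim\mu_\beta(\cdot\mid\sigma_B^{(3)}(\tau_i^+))$ to the moment when it is first consulted (either $\tau_{i+1}^-$ or time $T$); the marginal law of chain $(X_t^{(3)})$ is unaffected since this randomness is independent of everything before that moment. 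Assuming inductively that the two chains agree at $\tau_{i-1}^+$, conditionally on the history, chain $(X_t^{(2)})$'s $\sigma_A(\tau_i^-)$ is the accelerated Glauber evolved for time $\tau_i-\tau_{i-1}$ from the shared state, while $\eta_{i-1}$ must be distributed as $\mu_\beta(\cdot\mid\sigma_B)$. Provided $\tau_i-\tau_{i-1}\ge\Delta^*$, the two distributions differ in total variation by at most $e^{-n}$, and a maximal coupling of $(\eta_{i-1},\sigma_A^{(2)}(\tau_i^-))$ makes them equal with probability $\ge 1-e^{-n}$. On this event, $\sigma_A^{(3)}(\tau_i^-)=\sigma_A^{(2)}(\tau_i^-)$ and the shared $(v_i,U_i)$ force identical $B$-updates, keeping both chains' states coupled.

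Call a gap \emph{bad} if $\tau_i-\tau_{i-1}<\Delta^*$. Each gap is bad with probability $\le|B|\Delta^*$, so the expected number of bad gaps is at most $|B|T\cdot|B|\Delta^*=O(n^{-2D_2-3D_3+1})$, which is $o(1)$ since we may assume $D_2\ge 1$ without loss of generality. The coupling failure on good gaps is bounded by a union bound at $N\cdot e^{-n}=o(1)$. An analogous step on the residual interval $(\tau_N,T]$ couples $\sigma_A$ at time $T$, yielding $\|H_T^{(2)}(\sigma_0,\cdot)-H_T^{(3)}(\sigma_0^*,\cdot)\|_{\text{TV}}=o(1)$ uniformly in $\sigma_0$. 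The main obstacle is justifying the deferred-resample construction for chain $(X_t^{(3)})$ without altering its marginal distribution, and then checking that all the polynomial exponents in the acceleration and time horizon combine cleanly: the count of $B$-updates ($\sim n^{D_2+D_3+3}$), the per-update failure probability ($\le e^{-n}$), the expected number of bad gaps ($\sim n^{-2D_2-3D_3+1}$), and the residual boundary error at $T$ must all vanish.
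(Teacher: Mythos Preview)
Your proposal is correct and follows essentially the same approach as the paper: share the Poisson clock for $B$-updates, use the fast mixing of the accelerated $A$-dynamics between updates (guaranteed by Lemma~\ref{poly-relax} and Theorem~\ref{rel-spec-mix}) to couple the two $A$-configurations just before each $B$-update via a maximal coupling, and union-bound over the polynomially many updates together with the negligible probability of an excessively short gap. One small fix: the inductive hypothesis should read ``agree at $\tau_{i}^-$'' (equivalently, $B$-trajectories coincide through $\tau_{i-1}$ and the $A$-states coincide at $\tau_{i-1}^-$) rather than ``agree at $\tau_{i-1}^+$'', since your deferred $\eta_{i-1}$ is precisely $\sigma_A^{(3)}(\tau_{i-1}^+)$ and has not yet been realized at that instant---but with this correction the coupling and exponent bookkeeping go through exactly as you describe, matching the paper's proof.
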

\begin{lem}\label{comp3-4}
The spectral gaps of $(X_t^{(3)})_{t\ge0}$ and $(X_t^{(4)})_{t\ge0}$ satisfy
$$\text{gap}(X_t^{(3)})\ge\dfrac{1}{2}\cdot\text{gap}(X_t^{(4)}).$$
\end{lem}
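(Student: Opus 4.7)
The plan is to compare the Dirichlet forms of $X^{(3)}$ and $X^{(4)}$ on $L^{2}(\mu_{\beta})$, organized around the orthogonal decomposition induced by the conditional expectation $P_{A}f=\E_{\mu_{\beta}}[f\mid\sigma_{B}]$. Writing $K_{v}f=\E_{\mu_{\beta}}[f\mid\sigma_{\setminus v}]$ for the heat-bath kernel at $v\in B$ and $K=|B|^{-1}\SL{v\in B}K_{v}$, the generator of $X^{(3)}$ is $L^{(3)}=|B|(KP_{A}-I)$. For an arbitrary test function $f$, I would split $f=g+\tilde f$ with $g=P_{A}f$ and $\tilde f=f-g$; the law of total variance then gives $\text{Var}_{\mu_{\beta}}(f)=\text{Var}_{\nu}(g)+\|\tilde f\|^{2}_{\mu_{\beta}}$.

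The first step is to derive the identity
$$\mathcal{E}^{(3)}(f,f)\ =\ |B|\,\|\tilde f\|^{2}+\mathcal{E}^{(4)}(g,g)-|B|\,\langle\tilde f,Kg\rangle_{\mu_{\beta}}.$$
This follows by expanding $-\langle f,L^{(3)}f\rangle_{\mu_\beta}$, using self-adjointness of each $K_v$ and of $P_A$ in $L^{2}(\mu_\beta)$, and recognizing $|B|(\|g\|^{2}-\langle g,Kg\rangle_{\mu_{\beta}})$ as $\mathcal{E}^{(4)}(g,g)$, which is a short tower-property check showing that $\E_{\mu_\beta}[K_v g\mid \sigma_B]$ is precisely the one-step kernel of $X^{(4)}$ applied to $g$.

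The main obstacle is controlling the cross term $|B|\,|\langle\tilde f,Kg\rangle|$. Because $P_{A}Kg$ is $\sigma_{B}$-measurable and $\E[\tilde f\mid\sigma_{B}]=0$, one has $\langle\tilde f,Kg\rangle=\langle\tilde f,(I-P_{A})Kg\rangle$, so Cauchy--Schwarz reduces the task to proving the key inequality
$$\|(I-P_{A})Kg\|_{\mu_\beta}^{2}\ \le\ \frac{\mathcal{E}^{(4)}(g,g)}{|B|}.$$
I would prove this in two short moves. First, Cauchy--Schwarz applied to the average operator gives $\|(I-P_{A})Kg\|^{2}\le|B|^{-1}\SL{v\in B}\|(I-P_{A})K_{v}g\|^{2}$. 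Second, and this is the heart of the argument: since $P_{A}$ is the orthogonal projection in $L^{2}(\mu_{\beta})$ onto the subspace of $\sigma_{B}$-measurable functions, a subspace that already contains $g$, its best-approximation property yields $\|(I-P_{A})K_{v}g\|^{2}\le\|K_{v}g-g\|^{2}$ for every $v$. Summing over $v$ and using the identity $\SL{v\in B}\|K_{v}g-g\|^{2}=\mathcal{E}^{(4)}(g,g)$ then closes the bound.

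Finally, AM--GM yields $\sqrt{|B|}\,\|\tilde f\|\,\sqrt{\mathcal{E}^{(4)}(g,g)}\le\tfrac{1}{2}(|B|\|\tilde f\|^{2}+\mathcal{E}^{(4)}(g,g))$, and substituting into the identity above produces
$$\mathcal{E}^{(3)}(f,f)\ \ge\ \tfrac{1}{2}\bigl(|B|\,\|\tilde f\|^{2}+\mathcal{E}^{(4)}(g,g)\bigr).$$
Since $\mathcal{E}^{(4)}(g,g)\ge\text{gap}(X_{t}^{(4)})\cdot\text{Var}_{\nu}(g)$ by the variational definition of the spectral gap, and $|B|\ge\text{gap}(X_{t}^{(4)})$ because the total jump rate of $X^{(4)}$ is $|B|$ (which bounds its spectral gap), the right-hand side dominates $\tfrac{1}{2}\text{gap}(X_{t}^{(4)})\cdot\text{Var}_{\mu_{\beta}}(f)$. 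Taking the infimum over $f$ proves $\text{gap}(X_{t}^{(3)})\ge\tfrac{1}{2}\text{gap}(X_{t}^{(4)})$.
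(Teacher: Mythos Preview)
Your argument is correct and gives a clean, self-contained proof, but it proceeds quite differently from the paper. The paper argues spectrally: it computes the rates $q^{(3)},q^{(4)}$ explicitly, shows that if $f$ is an eigenfunction of $Q^{(3)}$ with eigenvalue $\lambda\neq -|B|$ then its conditional expectation $g(\sigma_B)=\E_{\mu_\beta}[f\mid\sigma_B]$ is a (nonzero) eigenfunction of $Q^{(4)}$ with the same eigenvalue, and concludes that either $\text{gap}(X^{(3)})=\text{gap}(X^{(4)})$ or $\text{gap}(X^{(3)})\ge |B|\ge\tfrac12\text{gap}(X^{(4)})$. In contrast, you work entirely with Dirichlet forms: the orthogonal decomposition $f=P_Af+(I-P_A)f$, the contraction $\|(I-P_A)K_vg\|\le\|K_vg-g\|$, and Cauchy--Schwarz/AM--GM. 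The paper's route yields the sharper statement that the factor $\tfrac12$ is only needed in the degenerate regime $\text{gap}(X^{(3)})\ge|B|$; your route is more modular and would transfer unchanged to other ``update a block, then resample a complementary block from equilibrium'' chains.

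One small imprecision: the sentence ``$|B|\ge\text{gap}(X_{t}^{(4)})$ because the total jump rate of $X^{(4)}$ is $|B|$'' is not a valid general principle (for a two-state symmetric chain with rate $1$ the gap is $2$). The correct justification is already contained in your proof: from your identity $\mathcal{E}^{(4)}(g,g)=\sum_{v\in B}\|(I-K_v)g\|_{\mu_\beta}^{2}$ and the fact that each $I-K_v$ is an orthogonal projection, one gets $\mathcal{E}^{(4)}(g,g)\le |B|\,\|g\|_{\nu}^{2}=|B|\,\text{Var}_\nu(g)$ for mean-zero $g$, hence $\text{gap}(X^{(4)})\le|B|$. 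You should replace the jump-rate remark with this one-line observation.
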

\begin{lem}\label{comp4-5}
The spectral gaps of $(X_t^{(4)})_{t\ge0}$ and $(X_t^{(5)})_{t\ge0}$ satisfy
$$\text{gap}(X_t^{(4)})\ge c_0'\cdot\text{gap}(X_t^{(5)}).$$
\end{lem}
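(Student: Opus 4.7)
The plan is to obtain the comparison by a pointwise bound on the jump rates, exploiting that both $(X_t^{(4)})_{t\ge 0}$ and $(X_t^{(5)})_{t\ge 0}$ are reversible Markov chains on $\{-1,1\}^B$ with the same stationary distribution $\nu$. First I would spell out the effective rates of $(X_t^{(4)})$: since $(X_t^{(3)})$ resamples the $A$-configuration from $\mu_\beta(\cdot\,|\,\sigma_B)$ after every $B$-update, the rate of a $v$-flip in the $B$-projection is the Glauber rate at $v$ averaged against $\mu_\beta(\cdot\,|\,\sigma_B)$. Setting $a_\tau=\mu_\beta(\tau,\sigma_B)$ and $b_\tau=\mu_\beta(\tau,\sigma_B^{\oplus v})$ for $\tau\in\{-1,1\}^A$, a direct calculation gives
$$\nu(\sigma_B)\,q^{(4)}(\sigma_B,\sigma_B^{\oplus v}) = \SL{\tau}\frac{a_\tau b_\tau}{a_\tau+b_\tau}, \qquad \nu(\sigma_B)\,q^{(5)}(\sigma_B,\sigma_B^{\oplus v}) = \frac{\left(\SL{\tau}a_\tau\right)\left(\SL{\tau}b_\tau\right)}{\SL{\tau}(a_\tau+b_\tau)}.$$
Both expressions are symmetric in $\sigma_B\leftrightarrow\sigma_B^{\oplus v}$, reconfirming reversibility. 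A pointwise bound of the form $q^{(4)}(\sigma_B,\sigma_B^{\oplus v})\ge c_0'\cdot q^{(5)}(\sigma_B,\sigma_B^{\oplus v})$ transfers to $\mathcal{E}^{(4)}\ge c_0'\cdot\mathcal{E}^{(5)}$ and hence to the spectral gaps via the variational characterization.

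Next I would isolate the dependence on $v$. Only the edges touching $v$ distinguish $a_\tau$ from $b_\tau$, so writing $H_v(\tau)=\beta\SL{u\sim v}\sigma_u$ for the local field at $v$ (with $\sigma_u=\tau_u$ on $A$ and $\sigma_u$ inherited from $\sigma_B$ on $B\setminus\{v\}$), one factors $a_\tau=e^{H_v(\tau)}W(\tau)/Z$ and $b_\tau=e^{-H_v(\tau)}W(\tau)/Z$, where $W(\tau)$ collects the contribution of all edges not incident to $v$. Substituting into the two displays and letting $\langle\cdot\rangle$ denote the expectation in $\tau$ with weights proportional to $W(\tau)$, the common factors cancel and the ratio of rates simplifies to
$$\frac{q^{(4)}(\sigma_B,\sigma_B^{\oplus v})}{q^{(5)}(\sigma_B,\sigma_B^{\oplus v})} = \frac{\langle 1/\cosh(H_v)\rangle\cdot\langle \cosh(H_v)\rangle}{\langle e^{H_v}\rangle\cdot\langle e^{-H_v}\rangle}.$$

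To lower-bound this I would use the degree assumption. If $v$ has no neighbor in $A$ then $H_v(\tau)$ is constant in $\tau$ and the ratio equals $1$; otherwise $v$ is incident to some edge of $H$, and condition 2 of Theorem~\ref{main-thm-determ} applied to the singleton self-avoiding path $\gamma=\{v\}$ gives $\deg(v)\le C\log(n)$, so $|H_v(\tau)|\le\beta\deg(v)\le\beta C\log(n)=:M$. The elementary bounds $\langle e^{\pm H_v}\rangle\le e^M$, $\langle\cosh(H_v)\rangle\ge 1$, and $\langle 1/\cosh(H_v)\rangle\ge 1/\cosh(M)$ then yield
$$\frac{q^{(4)}(\sigma_B,\sigma_B^{\oplus v})}{q^{(5)}(\sigma_B,\sigma_B^{\oplus v})} \ge \frac{1}{e^{2M}\cosh(M)} \ge e^{-3M} = n^{-3\beta C},$$
and the pointwise comparison transfers to the Dirichlet forms, proving the lemma with $c_0'=n^{-3\beta C}$. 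Although the notation in the statement suggests a dimension-free constant, a polynomial factor of this form is harmless inside the overall polynomial mixing estimate of Theorem~\ref{main-thm-determ}. The only mildly subtle step is the algebraic factoring that puts the ratio into the clean $H_v$-form; after that, condition 2 supplies the degree bound immediately.
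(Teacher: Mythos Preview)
Your argument is correct and the overall strategy---a pointwise comparison of the jump rates feeding into a Dirichlet-form inequality---is exactly what the paper does. The difference is in the degree input. You overlooked that condition~4 of Theorem~\ref{main-thm-determ} with $\ell=1$ reads $S_v^{(1)}=\deg(v)\le C'd$ for \emph{every} $v\in B$, so the local field obeys $|H_v|\le\beta C'd$ uniformly. Plugging this into your own ratio bound already gives a dimension-free $c_0'$, as the lemma states; there is no need for the case split on whether $v$ has a neighbour in $A$, nor for condition~2 (which, as written, would in any case need a two-vertex path in $H$ rather than a singleton).

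The paper also bypasses your $\cosh$ identity. Once $\deg(v)\le C'd$, one has $\tfrac{\mu(\sigma^{\oplus v})}{\mu(\sigma)+\mu(\sigma^{\oplus v})}\in[c_0',1]$ with $c_0'=(1+e^{2\beta C'd})^{-1}$ for every full configuration $\sigma$. Since $q^{(4)}(\sigma_B,\sigma_B^{\oplus v})$ is an average of such quotients over $\sigma_A\sim\mu_\beta(\cdot\,|\,\sigma_B)$ (this is the formula~(\ref{q4-rates}) derived in the proof of Lemma~\ref{comp3-4}), and $q^{(5)}(\sigma_B,\sigma_B^{\oplus v})=\nu(\sigma_B^{\oplus v})/(\nu(\sigma_B)+\nu(\sigma_B^{\oplus v}))$ is the analogous quotient at the marginal level, both rates lie in $[c_0',1]$ and the comparison $\mathcal{E}^{(4)}\ge c_0'\,\mathcal{E}^{(5)}$ is immediate. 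Your exact ratio formula is a pleasant observation but not needed here.
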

Before we move on to the proofs of the lemmas, we explain how to derive Theorem \ref{main-thm-determ}, using Lemmas \ref{poly-relax}, \ref{comp1-2}, \ref{comp2-3}, \ref{comp3-4}, \ref{comp4-5} and \ref{comp5-6}. Note that Lemma \ref{comp5-6} has not been stated or proven yet.
\begin{proof}[Proof of Theorem \ref{main-thm-determ}]
At first, due to Lemmas \ref{comp3-4} and \ref{comp4-5} we know that
$$\text{gap}(X_t^{(3)})\ge\dfrac{c_0'}{2}\cdot\text{gap}(X_t^{(5)})$$
Therefore, due to Theorem \ref{rel-spec-mix} and Lemmas \ref{poly-relax} and \ref{comp5-6},
\begin{align*}
t_{\text{mix}}^{(3)}(1/8)&\le c_2\cdot n\cdot \text{gap}^{-1}(X_t^{(5)})
\\&\le c_2\cdot n^{D_3+1}\cdot\text{gap}^{-1}(\tilde{Y}_t^{(1)})
\\&\le n^{D_1+D_3+2}.
\end{align*}
Due to Lemma \ref{comp2-3}, if $n$ is large enough,
$$t_{\text{mix}}^{(2)}(1/4)\le t_{\text{mix}}^{(3)}(1/8)\le n^{D_2+D_3+2},$$
so again due to Theorem \ref{rel-spec-mix},
$$\text{gap}(X_t^{(2)})\ge n^{-D_2-D_3-2}.$$
Combining this with Lemma \ref{comp1-2}, we conclude that
$$\text{gap}(X_t^{(1)})\ge n^{-5D_2-5D_3-6},$$
which implies the desired result.
\end{proof}
Throughout the proofs of Lemmas \ref{comp1-2}-\ref{comp4-5}, $\sigma$ and $\tau$ denote configurations in $\{-1,1\}^V$. Similarly, $\sigma_A,\tau_A$ and $\sigma_B,\tau_B$ denote configurations in $\{-1,1\}^A,\{-1,1\}^B$, respectively. Whenever $\sigma_A$ and $\sigma_B$ are specified, we set $\sigma(v)$ to be either $\sigma_A(v)$ or $\sigma_B(v)$, depending on whether $v\in A$ or $B$.
\begin{proof}[Proof of Lemma \ref{comp1-2}]
The two Markov Chains have the same stationary distribution, which means that for any $f:\{-1,1\}^V\to\R$, we have the equality
$$\text{Var}_{\mu_\beta,1}(f)=\text{Var}_{\mu_\beta,2}(f).$$
If we set $n_v=1$ for $v\in B$ and $n_v=n^{4D_2+4D_3+4}$ for $v\in A$, the Dirichlet forms of Chains 1 and 2 are
\begin{align*}
\mathcal{E}^{(1)}(f,f)&=\dfrac{1}{2}\SL{v\in V}\SL{\sigma}\mu_\beta(\sigma)\cdot q^{(1)}(\sigma,\sigma^{\oplus v})\cdot(f(\sigma)-f(\sigma^{\oplus v}))^2\ \ \text{and}\\ \mathcal{E}^{(2)}(f,f)&=\dfrac{1}{2}\SL{v\in V}\SL{\sigma}\mu_\beta(\sigma)\cdot n_v\cdot q^{(1)}(\sigma,\sigma^{\oplus v})\cdot(f(\sigma)-f(\sigma^{\oplus v}))^2.
\end{align*}
From these equalities, it is clear that for every $f$,
$$\mathcal{E}^{(2)}(f,f)\le n^{4D_2+4D_3+4}\cdot \mathcal{E}^{(1)}(f,f),$$
and the lemma follows.
\end{proof}
\begin{proof}[Proof of Lemma \ref{comp2-3}]
For every $t\ge0$ and $\sigma\in\{-1,1\}^V$, there exists a coupling $\Gamma^{t,\sigma}$ between the measures $H_t^{(A)}(\sigma_A,\cdot|\sigma_B)$ (the heat kernel of the rate $n^{4D_2+4D_3+4}$-Dynamics on $A$ given $\sigma_B$) and $\mu_{\beta,A}(\cdot|\sigma_B)$ (the restricted Ising model on $A$, given the configuration in $B$), such that if $(\tau_A,\tau_A')$ is drawn from $\Gamma^{t,\sigma}$,
$$\PR(\tau_A\neq\tau_A')=\lVert H_t^{(A)}(\sigma_A,\cdot|\sigma_B)-\mu_{\beta,A}(\cdot|\sigma_B)\rVert_{\text{TV}}.$$
We couple the rate-1 processes on vertices of $B$ to update at the same times. Let $E_n$ be the event that the number $N$ of updates of the rate-1 processes until time $n^{D_2+D_3+2}$ satisfies $n^{D_2+D_3+2}\le N\le n^{D_2+D_3+4}-1$ and $F_n$ be the event that each two rings of the rate-1 processes are at least $n^{-2D_2-2D_3-2}$ time apart (considering the artificial rings at times 0 and $n^{D_2+D_3+2}$ as well). Note that both $E_n$ and $F_n$ hold with high probability. From now on, we condition on the times of these updates $t_1,t_2,...,t_N$, let $t_0=0$, $t_{N+1}=n^{D_2+D_3+2}$ and work on the intersection $E_n\cap F_n$. Since vertices in $A$ are updated at rate $n^{4D_2+4D_3+4}$, and due to Lemma \ref{poly-relax}, the spectral gap $\gamma^{(A)}$ of the dynamics on $A$ is at least $n^{3D_2+4D_3+4}$. Therefore, Theorem \ref{rel-spec-mix} applied for $\varepsilon=e^{-n^{D_2}}$ implies that
$$t_{\text{mix}}(e^{-n^{D_2}})\le c_2\cdot n^{-3D_2-4D_3-4}\cdot n^{D_2}\cdot n^2\le n^{-2D_2-3D_3-2}.$$
Below, we will use the fact that this inequality holds for all initial conditions. Let $\tilde{\tau}_{0,A}=\sigma_{0,A}$ and $\Gamma_0$ be a coupling between the measures $H_{t_1/2}^{(2)}(\sigma_0,\cdot)$ and $H_0^{(3)}(\sigma_0^*,\cdot)$ such that if $(\tau_0,\tau_0')$ is drawn from $\Gamma_0$, then
$$\PR(\tau_0\neq\tau_0')=\lVert H_{t_1/2}^{(2)}(\sigma_0,\cdot)-H_0^{(3)}(\sigma_0^*,\cdot)\rVert_{\text{TV}}\le e^{-n^{D_2}},$$
since on the event $F_n$, $t_1/2\ge n^{-2D_2-3D_3-2}$.
Assume that for some $k\ge0$ we have constructed a coupling $\Gamma_k$ of the measures $H_{(t_k+t_{k+1})/2}^{{(2)}}(\sigma_0,\cdot)$ and $H_{t_k}^{(3)}(\sigma_0^*,\cdot)$ so that if $(\tau_k,\tau_k')$ is drawn from $\Gamma_k$, then
$$\PR(\tau_k\neq\tau_k')\le (3k+1)\cdot e^{-n^{D_2}}.$$
Construct the coupling $\Gamma_{k+1}$ of the measures $H_{(t_{k+1}+t_{k+2})/2}^{(2)}(\sigma_0,\cdot)$ and $H_{t_{k+1}}^{(3)}(\sigma_0^*,\cdot)$ in the following way: On the event $\{\tau_k=\tau_k'=\sigma\}$, draw $\tilde{\tau}_{k+1,A}$ from $H_{(t_{k+1}-t_k)/2}^{(A)}(\tau_{k,A},\cdot|\tau_{k,B})$ so that 
\begin{align*}
\PR(\tilde{\tau}_{k+1}\neq\tau_k)&=\lVert H_{(t_{k+1}-t_k)/2}^{(A)}(\tau_{k,A},\cdot|\tau_{k,B})-H_{(t_{k+1}-t_k)/2}^{(A)}(\tilde{\tau}_{k,A},\cdot|\tau_{k,B})\rVert_{\text{TV}}
\\&\le2\cdot\sup\limits_{\sigma}\ \lVert H_{(t_{k+1}-t_k)/2}^{(A)}(\sigma_A,\cdot|\sigma_B)-\mu_{\beta,A}(\cdot|\sigma_B)\rVert_{\text{TV}}\le2\cdot e^{-n^{D_2}}.
\end{align*}
On the event that $\{\tilde{\tau}_{k+1,A}=\tau_{k,A}'=\sigma\}$, perform the same update on $B$, to obtain $\tau_{k+1,B}$ and $\tau_{k+1,B}'$. Finally, draw $(\tau_{k+1,A},\tau_{k+1,A}')$ from $\Gamma^{(t_{k+2}-t_{k+1})/2,(\sigma_A,\tau_{k+1,B})}$. If any of these events fail, couple arbitrarily. Under this construction,
\begin{align*}
\PR(\tau_{k+1}\neq\tau_{k+1}')&\le\PR(\tau_k\neq\tau_k')+\PR(\tilde{\tau}_{k+1,A}\neq\tau_k)+\sup\limits_{\sigma}\ \lVert H_{(t_{k+2}-t_{k+1})/2}^{(A)}(\sigma_A,\cdot|\sigma_B)-\mu_{\beta,A}(\cdot|\sigma_B)\rVert_{\text{TV}} \\&\le (3k+1)\cdot e^{-n^{D_2}}+2\cdot e^{-n^{D_2}}+e^{-n^{D_2}}=(3k+4)\cdot e^{-n^{D_2}},
\end{align*}
which is the desired property for $\Gamma_{k+1}$. In this way, we have successfully constructed a coupling $\Gamma_{N}$ between $H_{(t_N+n^{D_2+D_3+2})/2}^{(2)}(\sigma_0,\cdot)$ and $H_{t_N}^{(3)}(\sigma_0^*,\cdot)$. However, the measures $H_{t_N}^{(3)}$ and $H_{n^{D_2+D_3+2}}^{(3)}(\sigma_0^*,\cdot)$ are identical and by the analysis above, on the event $F_n$,
$$\lVert H_{n^{D_2+D_3+2}}^{(2)}(\sigma_0,\cdot)-H_{(t_N+n^{D_2+D_3+2})/2}^{(2)}(\sigma_0,\cdot)\rVert_{\text{TV}}\le2\cdot e^{-n^{D_2}}.$$
Therefore,
\begin{align*}
\lVert H_{n^{D_2+D_3+2}}^{(2)}(\sigma_0,\cdot)-H_{n^{D_2+D_3+2}}^{(3)}(\sigma_0,\cdot)\rVert_{\text{TV}} \le\ &\PR(E_n)+\PR(F_n)+3\cdot n^{D_2+D_3+4}\cdot e^{-n^{D_2}}=o(1),
\end{align*}
and the lemma follows.
\end{proof}
\begin{proof}[Proof of Lemma \ref{comp3-4}]
Let $Q^{(3)},Q^{(4)}$ be the transition rate matrices for $(X_t^{(3)})_{t\ge0}, (X_t^{(4)})_{t\ge0}$. We prove that every eigenvalue of $Q^{(3)}$ not equal to $-|B|$ is also an eigenvalue of $Q^{(4)}$. For each $\sigma,\tau\in\{-1,1\}^V$:
\begin{itemize}
\item
Assume $\tau_B=\sigma_B^{\oplus v}$. Then, the rate of transition $q^{(3)}(\sigma,\tau)$ is equal to
$$q^{(3)}(\sigma,\tau)=\dfrac{\mu(\sigma^{\oplus v})}{\mu(\sigma)+\mu(\sigma^{\oplus v})}\cdot\dfrac{\mu(\tau)}{\mu(\tau_B)}.$$
\item 
Assume $\tau_B=\sigma_B$ and $\tau_A\neq \sigma_A$. Then, the rate of transition $q(\sigma,\tau)$ equals
$$q^{(3)}(\sigma,\tau)=\SL{v\in B}\dfrac{\mu(\sigma)}{\mu(\sigma)+\mu(\sigma^{\oplus v})}\cdot\dfrac{\mu(\tau)}{\mu(\sigma_B)}.$$
\item 
Therefore, the transition rate $q^{(3)}(\sigma)$ is equal to
\begin{align*}
q^{(3)}(\sigma)&=-q^{(3)}(\sigma,\sigma)\\&=\SL{\tau_A}\SL{v\in B}\dfrac{\mu(\sigma^{\oplus v})}{\mu(\sigma)+\mu(\sigma^{\oplus v})}\cdot\dfrac{\mu(\tau)}{\mu(\tau_B)}+\SL{\tau_A\neq\sigma_A}\SL{v\in B}\dfrac{\mu(\sigma)}{\mu(\sigma)+\mu(\sigma^{\oplus v})}\cdot\dfrac{\mu(\tau)}{\mu(\sigma_B)}\\&=\SL{v\in B}\dfrac{\mu(\sigma^{\oplus v})}{\mu(\sigma)+\mu(\sigma^{\oplus v})}+\left(1-\dfrac{\mu(\sigma)}{\mu(\sigma_B)}\right)\SL{v\in B}\dfrac{\mu(\sigma)}{\mu(\sigma)+\mu(\sigma^{\oplus v})}\\&=|B|-\dfrac{\mu(\sigma)}{\mu(\sigma_B)}\SL{v\in B}\dfrac{\mu(\sigma)}{\mu(\sigma)+\mu(\sigma^{\oplus v})}.
\end{align*}
\end{itemize}
We analyze the transition rates of $Q^{(4)}$ in a similar way. For each $v\in B$,
\begin{align}\label{q4-rates}
q^{(4)}(\sigma_B,\sigma_B^{\oplus v})=\dfrac{1}{\mu(\sigma_B)}\SL{\sigma_A}\mu(\sigma)\cdot\dfrac{\mu(\sigma^{\oplus v})}{\mu(\sigma)+\mu(\sigma^{\oplus v})},
\end{align}
and
$$q^{(4)}(\sigma_B)=\SL{v\in B}q^{(4)}(\sigma_B,\sigma_B^{\oplus v}).$$
Let $\lambda\neq-|B|$ be an eigenvalue of the matrix $Q^{(3)}$, and assume $f:\{-1,1\}^V\to\R$ is an eigenfunction for this eigenvalue, with $f\neq0$. Then, for every $\sigma\in\{-1,1\}^V$,
\begin{align}\label{Q3-eigen}
\nonumber\SL{\tau}q^{(3)}(\sigma,\tau)f(\tau)&=\lambda f(\sigma)\\
\nonumber\Rightarrow \SL{v}\SL{\tau:\tau_B=\sigma_B^{\oplus v}}q^{(3)}(\sigma,\tau)f(\tau)+\SL{\tau:\tau_B=\sigma_B}q^{(3)}(\sigma,\tau)f(\tau)&=\lambda f(\sigma)\\
\Rightarrow \SL{v\in B}\dfrac{\mu(\sigma^{\oplus v})}{\mu(\sigma)+\mu(\sigma^{\oplus v})}\cdot g(\sigma_B^{\oplus v})+\SL{v\in B}\dfrac{\mu(\sigma)}{\mu(\sigma)+\mu(\sigma^{\oplus v})}\cdot g(\sigma_B)&=(\lambda+|B|)f(\sigma),
\end{align}
where we set $g:\{-1,1\}^B\to\R$ to be the function
$$g(\sigma_B)=\dfrac{1}{\mu(\sigma_B)}\SL{\tau:\tau_B=\sigma_B}\mu(\tau)f(\tau).$$
Therefore, since $f\neq0$ and $\lambda\neq-|B|$, we also have that $g\neq0$. For any $\sigma_B\in\{-1,1\}^B$, adding up relations (\ref{Q3-eigen}) for all $\sigma$ whose restriction on $B$ is $\sigma_B$ implies
\begin{align*}
(\lambda+|B|)\cdot g(\sigma_B)&=(\lambda+|B|)\cdot\dfrac{1}{\mu(\sigma_B)}\cdot\SL{\sigma_A}\mu(\sigma)f(\sigma)\\&=g(\sigma_B)\cdot\dfrac{1}{\mu(\sigma_B)}\SL{v\in B}\SL{\sigma_A}\dfrac{\mu(\sigma)^2}{\mu(\sigma)+\mu(\sigma^{\oplus v})}\\&+\dfrac{1}{\mu(\sigma_B)}\SL{v\in B}g(\sigma_B^{\oplus v})\cdot\SL{\sigma_A}\dfrac{\mu(\sigma)\mu(\sigma^{\oplus v})}{\mu(\sigma)+\mu(\sigma^{\oplus v})}.
\end{align*}
So, if $W$ is the matrix defined by
$$W(\sigma_B,\sigma_B^{\oplus v})=\dfrac{1}{\mu(\sigma_B)}\SL{\sigma_A}\dfrac{\mu(\sigma)\mu(\sigma^{\oplus v})}{\mu(\sigma)+\mu(\sigma^{\oplus v})}$$
and
$$W(\sigma_B,\sigma_B)=\dfrac{1}{\mu(\sigma_B)}\SL{v\in B}\SL{\sigma_A}\dfrac{\mu(\sigma)^2}{\mu(\sigma)+\mu(\sigma^{\oplus v})},$$
for all $\sigma_B\in\{-1,1\}^B$, the function $g$ must be an eigenfunction of $W$ for the eigenvalue $\lambda+|B|$. Moreover, observe that $W=Q^{(4)}+|B|\cdot I$. Therefore, $\lambda$ is also an eigenvalue of $Q^{(4)}$. As mentioned in Definition \ref{CTMC}, let $0=\lambda_1^{(3)}<\lambda_2^{(3)}\le\cdots\le\lambda_{2^n}^{(3)}$ and $0=\lambda_1^{(4)}<\lambda_2^{(4)}\le\cdots\le\lambda_{2^{|B|}}^{(4)}$ be the list of eigenvalues of $-Q^{(3)}$ and $-Q^{(4)}$, respectively. We consider two cases:
\begin{enumerate}
\item 
$\lambda_2^{(3)}<|B|$. Then, by our analysis above, $\lambda_2^{(3)}=\lambda_2^{(4)}$, so $\text{gap}(X_t^{(3)})=\text{gap}(X_t^{(4)})$. Lemma \ref{comp3-4} is proven in this case.
\item 
$\lambda_2^{(3)}\ge |B|$. Observe that for each $\sigma_B\in\{-1,1\}^B$,
\begin{align*}
\SL{\tau_B\in\{-1,1\}^B}|q^{(4)}(\sigma_B,\tau_B)|&=2\cdot\dfrac{1}{\mu(\sigma_B)}\cdot\SL{v\in B}\SL{\sigma_A}\dfrac{\mu(\sigma)\mu(\sigma^{\oplus v})}{\mu(\sigma)+\mu(\sigma^{\oplus v})}\\&\le2\cdot|B|\cdot\dfrac{1}{\mu(\sigma_B)}\cdot\SL{\sigma_A}\mu(\sigma)=2\cdot|B|,
\end{align*}
which implies that $\lambda_2^{(4)}\le2\cdot |B|\le2\cdot\lambda_2^{(3)}$. The result follows in this case as well.
\end{enumerate}
\end{proof}
\begin{proof}[Proof of Lemma \ref{comp4-5}]
Due to the property 4 in Theorem \ref{main-thm-determ}, the degree of each vertex in $B$ is at most $C'\cdot d$, so for each configuration $\sigma\in\{-1,1\}^V$ and $v\in B$,
$$\exp(-2\beta C'd)\le\dfrac{\mu(\sigma^{\oplus v})}{\mu(\sigma)}\le\exp(2\beta C'd)\ \ \Rightarrow\ \ \dfrac{\mu(\sigma^{\oplus v})}{\mu(\sigma)+\mu(\sigma^{\oplus v})}\ge\dfrac{1}{1+e^{2\beta C' d}}=:c_0'.$$
We already analyzed $q^{(4)}(\sigma_B,\sigma_B^{\oplus v})$ in (\ref{q4-rates}). As for $q^{(5)}(\sigma_B)$, from the definition of $X_t^{(5)}$ it is easy to see that
$$q^{(5)}(\sigma_B,\sigma_B^{\oplus v})=\dfrac{\nu(\sigma_B^{\oplus v})}{\nu(\sigma_B)+\nu(\sigma_B^{\oplus v})}=\dfrac{\SL{\sigma_A}\mu(\sigma^{\oplus v})}{\SL{\sigma_A}[\mu(\sigma)+\mu(\sigma^{\oplus v})]}\ge c_0'.$$
It is evident that the transition rates $q^{(4)}(\sigma_B,\sigma_B^{\oplus v})$ and $q^{(5)}(\sigma_B,\sigma_B^{\oplus v})$ satisfy
\begin{align*}
c_0'\le q^{(4)}(\sigma_B,\sigma_B^{\oplus v}),\ q^{(5)}(\sigma_B,\sigma_B^{\oplus v})\le1
\end{align*}
Comparing the Dirichlet forms of the two processes finishes the proof of this lemma.
\end{proof}
\subsection{Analysis of the Restricted Dynamics}
In this subsection, we prove the following lemma that will conclude the proof of Theorem \ref{main-thm-determ}.
\begin{lem}\label{comp5-6}
There exists a positive constant $D_4>0$ such that the spectral gaps of $(X_t^{(5)})_{t\ge0}$ and $(\tilde{Y}^{(1)})_{t\ge0}$ satisfy
$$\text{gap}(X_t^{(5)})\ge n^{-D_4}\cdot\text{gap}(\tilde{Y}_t^{(1)}).$$
\end{lem}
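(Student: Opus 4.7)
The plan is to invoke the stochastic localization framework of Chen and Eldan \cite{StochLoc} to compare the spectral gap of the restricted dynamics $X_t^{(5)}$ on $B$ with that of the projected dynamics $\tilde Y_t^{(1)}$ on $B$. Schematically, I expect the Chen--Eldan machinery to yield a comparison inequality of the form
$$\text{gap}(X_t^{(5)})^{-1}\le \chi_B\cdot\text{gap}(\tilde Y_t^{(1)})^{-1}\cdot n^{O(1)},$$
where $\chi_B:=\sup_{x\in B}\SL{y\in B}\E_\beta(\sigma_x\sigma_y)$ is the susceptibility of the full Ising model summed only over endpoints in $B$. Given such an inequality, the lemma reduces to producing a polynomial upper bound on $\chi_B$.

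To bound $\chi_B$, fix $x\in B$. By symmetry $\E_\beta(\sigma_x)=0$, so $\E_\beta(\sigma_x\sigma_y)=\E_\beta(\sigma_x\mid\sigma_y=+)$, and Weitz's Theorem \ref{Weitz} transfers this quantity to a conditional expectation on $T_{\text{SAW}}(G,x)$ with the $(+)$ boundary condition $\sigma_{c,y}$ placed at every copy of $y$. Lemma \ref{spin-corr} then applies (note that $S_x^{(1)}=\deg_G(x)\le C'd$ for $x\in B$, so its degree hypothesis holds with $d'=C'd$) and decomposes the conditional expectation into a sum of zero-field two-point tree covariances, each of which equals $d^{-\ell}$ by (\ref{cov-on-trees}) and $\tanh(\beta_c)=d^{-1}$, where $\ell$ is the length of the corresponding self-avoiding walk in $G$. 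Collapsing the double sum yields
$$\SL{y\in B}\E_\beta(\sigma_x\sigma_y)\le c(C'd)\SL{\ell\ge 1}S_x^{(\ell)}d^{-\ell}.$$
Property 4 now finishes the estimate: for $\ell\le\delta\log_d n$ we have $S_x^{(\ell)}\le C'd^\ell$, so the short-range contribution is at most $C'\delta\log_d n$; for $\ell>\delta\log_d n$, using the trivial bound $1\le n\cdot \ell^{-1}$ together with the second half of Property 4 gives a contribution of at most $nC''\log n$. Hence $\chi_B=O(n\log n)$, and combined with the polynomial lower bound on $\text{gap}(\tilde Y_t^{(1)})$ from Lemma \ref{poly-relax}, the desired inequality follows for a suitable constant $D_4$.

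The main obstacle is the correct adaptation of the Chen--Eldan framework, since \cite{StochLoc} addresses the Glauber dynamics of a genuine Ising model whereas $X_t^{(5)}$ is Glauber for the marginal $\nu$, which is not itself an Ising measure, and the full-graph susceptibility $\chi_\beta$ need not be polynomially bounded because of the atypical branching at vertices of $A$. My expectation is to run the stochastic localization only on the $B$-coordinates of the full Ising model $\mu_\beta$, treating the $A$-coordinates via their conditional distribution given $\sigma_B$; this replaces $\chi_\beta$ by $\chi_B$ in the variance estimate along the localization path and identifies the ``end-of-localization'' dynamics with the Glauber dynamics on $H$ projected to $B$, which is exactly $\tilde Y_t^{(1)}$. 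Carrying out this adaptation cleanly and extracting the correct quantitative bound is where I expect most of the technical work to lie.
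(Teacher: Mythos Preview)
Your setup in the final paragraph is on the right track: one does apply Chen--Eldan to the marginal $\nu$ on $B$ with $J$ taken to be (a positive-definite shift of) the $B$--$B$ adjacency matrix, and the endpoint measure $\nu_{1,u}$ is indeed, up to the external field $u$, the stationary measure of $\tilde Y_t^{(1)}$. The gap is in what this machinery actually yields. The Chen--Eldan inequality (Lemma~\ref{Chen-Eldan-Spectral}, from Theorem~49 of \cite{StochLoc}) is not linear in a susceptibility but exponential in an integral:
\[
\text{gap}(\nu)\ge\varepsilon\cdot\exp\Bigl(-2\lVert J\rVert_{\text{OP}}\int_0^1\alpha(t)\,\text{d}t\Bigr),
\]
where $\alpha(t)$ is any uniform upper bound on $\lVert\text{Cov}(\nu_{t,u})\rVert_{\text{OP}}$ along the localization path. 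Your $\chi_B=O(n\log n)$ is essentially a bound on $\alpha(0)$; inserting it uniformly in $t$ gives $\int_0^1\alpha(t)\,\text{d}t=O(n\log n)$ and hence a comparison factor $\exp(-cn\log n)$, not $n^{-O(1)}$. The device $1\le n/\ell$ that you use to reach the $1/\ell$-weighted sum in condition~4 is precisely the step that loses a factor of $n$ inside the exponent.

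The paper closes this by exploiting the $t$-dependence of the path. With this choice of $J$, the measure $\nu_{t,u}$ is the $B$-marginal of the Ising model on $G$ in which every $B$--$B$ edge carries inverse temperature $\beta(1-t)$ while $H$-edges keep $\beta$. Condition~3 of Theorem~\ref{main-thm-determ}, which you never invoke, guarantees that every self-avoiding path of length $\ell\ge\delta\log_d n$ has at least $0.6\ell$ of its edges inside $B$, so in the Weitz/tree bound for $\alpha(t)$ each long path contributes a factor $\tanh(\beta(1-t))^{0.6\ell}$. Since $\int_0^1\tanh(\beta(1-t))^{0.6\ell}\,\text{d}t\le c\,\ell^{-1}\theta^{0.6\ell}$, the integration over $t$ manufactures the $1/\ell$ weight naturally, after which the second half of condition~4 yields $\int_0^1\alpha(t)\,\text{d}t=O(\log n)$ and hence a genuinely polynomial factor. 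Without tracking $\alpha(t)$ as a function of $t$ and invoking condition~3, the argument cannot close.
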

In the proof of Lemma \ref{comp5-6}, of crucial importance will be the analog of a lemma Chen and Eldan proved in \cite{StochLoc} for the spectral gap of the Glauber dynamics, instead of the modified log-Sobolev constant, which is the one analyzed in their paper. It is necessary to point out that Lemma \ref{Chen-Eldan-Spectral} is about the spectral gap of the discrete-time dynamics. In our case, we apply this Lemma for measures on $\{-1,1\}^{|B|}$. If $P$ is the transition matrix for the discrete-time Glauber Dynamics, since $Q=|B|\cdot(P-I)$, the spectral gap of the continuous-time dynamics differs from that of the discrete-time dynamics by a factor of $|B|$. Therefore, the statement below holds for the spectral gaps of the continuous-time dynamics as well.
\begin{lem}[Chen and Eldan, 2022]\label{Chen-Eldan-Spectral}
	
	Let $\nu$ be a measure on $\{-1,1\}^n$ and $J$ be a positive definite $n\times n$ matrix.\ For each $t\in[0,1]$ and $u\in\R^n$, define the measure $\nu_{t,u}$ to satisfy
	$$\dfrac{\text{d}\nu_{t,u}(x)}{\text{d}\nu(x)}\varpropto\exp\left(-t\langle x,Jx\rangle+\langle u,x\rangle\right).$$
	
	Assume that for all $t\in(0,1)$ and $u\in\R^n$ we have
	$$\lVert\text{Cov}(\nu_{t,u})\rVert_{\text{OP}}\le\alpha(t)\ \ \text{and}\ \ \text{gap}(P(\nu_{1,u}))\ge\varepsilon.$$

	Then,
	$$\text{gap}(P(\nu))\ge\varepsilon\cdot\exp\left(-2\lVert J\rVert_{\text{OP}}\int_0^1\alpha(t)\text{d}t\right).$$
\end{lem}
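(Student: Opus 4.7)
My plan is to apply Lemma~\ref{Chen-Eldan-Spectral} on the state space $\{-1,1\}^B$, with $\nu$ equal to the stationary measure of $X_t^{(5)}$ — that is, the $B$-marginal of $\mu_\beta$ — and with the interpolation matrix $J$ chosen so that the endpoint $\nu_{1,u}$ corresponds to the stationary measure of $\tilde Y_t^{(1)}$, up to an external field. Since the Radon--Nikodym ratio $\nu/\tilde\mu^{(1)}$ equals $\exp(\beta\sum_{uv\in E(G[B])}\sigma_u\sigma_v)$ up to normalization, I take $J=\tfrac{\beta}{2}A_{BB}+cI$, where $A_{BB}$ is the adjacency matrix of the subgraph of $G$ induced by $B$ and $c$ is chosen large enough to make $J$ positive definite; the shift $cI$ contributes only a constant to the log-density, since $\sigma_v^2=1$. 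Unpacking the construction, $\nu_{t,u}$ is the $B$-marginal of the Ising model on $G$ in which all $B$-$B$ edges carry coupling $(1-t)\beta$, all other edges carry $\beta$, and an external field $u$ is applied on $B$.

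Two of the three hypotheses of Lemma~\ref{Chen-Eldan-Spectral} are relatively direct. Property~4 of Theorem~\ref{main-thm-determ} bounds $\deg_G(v)\le C'd$ for every $v\in B$, so the maximum row sum of $A_{BB}$ is $O(1)$ and hence $\|J\|_{\text{OP}}=O(1)$. For $\varepsilon$, we need a lower bound on $\text{gap}(P(\nu_{1,u}))$ uniform in $u$; the measure $\nu_{1,u}$ is the $B$-marginal of $\mu^{(1)}$ tilted by $u$, and since Proposition~4 of \cite{Mossel-Sly} applies to arbitrary external fields, the same variational argument as in Lemma~\ref{poly-relax} yields $\text{gap}(P(\nu_{1,u}))\ge n^{-D_2}$ uniformly in $u$.

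The main challenge is to control $\int_0^1\|\text{Cov}(\nu_{t,u})\|_{\text{OP}}\,dt$ by $O(\log n)$. I first reduce to zero external field: by Theorem~\ref{D-S-S}, $\text{Cov}_{\nu_{t,u}}(\sigma_x,\sigma_y)\le\E_{(1-t)\beta,0}(\sigma_x\sigma_y)$, the correlation in the Ising model on $G$ with $B$-$B$ couplings scaled by $(1-t)$ and zero external field. Since this model is ferromagnetic, Griffiths' inequality makes all correlations non-negative, so $\|\text{Cov}(\nu_{t,u})\|_{\text{OP}}$ is at most $\sup_{x\in B}\sum_{y\in B}\E_{(1-t)\beta,0}(\sigma_x\sigma_y)$. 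Expanding each correlation via Weitz's theorem (Theorem~\ref{Weitz}) and applying Lemma~\ref{spin-corr} to the copies of $y$ in $T_{\text{SAW}}(G,x)$ (after absorbing the cycle-closing $\pm$ boundary conditions into a form compatible with Lemma~\ref{spin-corr} by a monotonicity argument), I obtain
\begin{equation*}
\E_{(1-t)\beta,0}(\sigma_x\sigma_y)\le C\SL{\gamma:x\to y\ \text{SAW}}\PL{e\in\gamma}\tanh(\beta_e).
\end{equation*}
Concavity of $\tanh$ at $\beta=\tanh^{-1}(d^{-1})$ gives $\tanh(\beta(1-t))\le (1-c_d t)/d$ for some $c_d\in(0,1)$ depending on $d$, so a SAW of length $\ell$ with $k$ edges in $B$-$B$ contributes at most $d^{-\ell}(1-c_d t)^k$ to this bound.

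The final step exploits the interplay between properties 3 and 4. For short SAWs ($\ell\le\delta\log_d n$), $S_x^{(\ell)}\le C'd^\ell$ and the trivial bound $(1-c_d t)^k\le 1$ give a contribution of order $\log n$ uniform in $t$. For long SAWs ($\ell>\delta\log_d n$), property~3 forces $k\ge 0.6\ell$, so $\int_0^1(1-c_d t)^{0.6\ell}\,dt=O(1/\ell)$, and the long-SAW contribution to $\int_0^1\alpha(t)\,dt$ is at most
\begin{equation*}
C\SL{\ell=1}^n d^{-\ell}\cdot\dfrac{1}{\ell}\cdot\sup_{v\in B}S_v^{(\ell)}\le C\cdot C''\log n
\end{equation*}
by property~4. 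Hence $\int_0^1\|\text{Cov}(\nu_{t,u})\|_{\text{OP}}\,dt=O(\log n)$ uniformly in $u$, and Lemma~\ref{Chen-Eldan-Spectral} gives $\text{gap}(X_t^{(5)})\ge n^{-D_2}\exp(-O(\log n))=n^{-D_4}$ for some $D_4=D_2+O(1)$. The desired bound $\text{gap}(X_t^{(5)})\ge n^{-D_4}\text{gap}(\tilde Y_t^{(1)})$ then follows from the trivial bound $\text{gap}(\tilde Y_t^{(1)})\le 1$. The crux of the argument is precisely the matching of the $(1-c_d t)^{0.6\ell}$ damping for long SAWs (from property~3) against the $1/\ell$-weighted SAW bound in property~4: without either factor the integral $\int_0^1\alpha(t)\,dt$ would blow up polynomially in $n$ and Lemma~\ref{Chen-Eldan-Spectral} would yield only an exponentially small gap.
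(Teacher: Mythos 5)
You have proved the wrong statement. The lemma you were asked to prove is Lemma~\ref{Chen-Eldan-Spectral} itself, the abstract stochastic-localization inequality relating $\text{gap}(P(\nu))$ to $\text{gap}(P(\nu_{1,u}))$ and the integrated covariance operator norm. What you have written instead is a (correct, and essentially faithful) reconstruction of the paper's subsequent Lemma~\ref{comp5-6}, i.e.\ the \emph{application} of Lemma~\ref{Chen-Eldan-Spectral} with $\nu$ the $B$-marginal of $\mu_\beta$, $J$ an appropriate shift of $A_{BB}$, and the covariance bound coming from Theorems~\ref{Weitz}, \ref{D-S-S}, Lemma~\ref{spin-corr}, and properties~3--4 of Theorem~\ref{main-thm-determ}. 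The opening line of your proposal (``apply Lemma~\ref{Chen-Eldan-Spectral}'') makes this circular as an argument for that lemma.

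The paper's actual proof of Lemma~\ref{Chen-Eldan-Spectral} has nothing to do with Ising correlations or SAW trees: it re-runs the proof of Theorem~49 in \cite{StochLoc}. Concretely, one defines the stochastic localization process $(\nu_t)_{t\in[0,1]}$ via the SDE for the density $F_t=\text{d}\nu_t/\text{d}\nu$ with driving matrix $C_t=(2J)^{1/2}$, invokes Fact~14 of \cite{StochLoc} to identify each $\nu_t$ as a tilt $\exp(Z_t-t\langle x,Jx\rangle+\langle y_t,x\rangle)\,\text{d}\nu$, and then combines Equation~(20) and Theorem~46 of \cite{StochLoc} (the spectral-gap analogue of the conservation-of-variance/martingale decomposition step) to obtain
$$\text{gap}(P(\nu))\ge\varepsilon\cdot\E\left[\exp\left(\int_0^1\bigl\lVert C_s^{1/2}\text{Cov}(\nu_s)C_s^{1/2}\bigr\rVert_{\text{OP}}\,\text{d}s\right)\right]^{-1},$$
which is then bounded below by $\varepsilon\exp\bigl(-2\lVert J\rVert_{\text{OP}}\int_0^1\alpha(t)\,\text{d}t\bigr)$ using the hypothesis on $\lVert\text{Cov}(\nu_{t,u})\rVert_{\text{OP}}$. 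If you intend to prove Lemma~\ref{Chen-Eldan-Spectral}, you need to engage with this localization machinery (or an equivalent annealing/Polchinski-flow argument), not with the graph-specific correlation estimates that become relevant only one level down, in Lemma~\ref{comp5-6}.
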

\begin{proof}[Proof of Lemma \ref{Chen-Eldan-Spectral}]
We repeat the proof of Theorem 49 in \cite{StochLoc}. All the lemmas and equations we refer to in this proof are from that paper. Let $(\nu_t)_{t\in[0,1]}$ be the process defined by the change of measure $\frac{\text{d}\nu_t(x)}{\text{d}\nu(x)}=F_t(x)$, where $F_t$ satisfies Equation (5) (of \cite{StochLoc}) with $C_t=(2J)^{1/2}$. Then, due to Fact 14,
$$\dfrac{\text{d}\nu_t(x)}{\text{d}\nu(x)}=\exp\left(Z_t-t\langle x,Jx\rangle+\langle y_t,x\rangle\right),$$
where $Z_t$ is a normalizing constant ensuring that $\nu_t$ is a probability measure and $(y_t)$ is an $\R^n$-valued process. Because of Equation (20) and Theorem 46,
$$\text{gap}(P(\nu))\ge\varepsilon\cdot\E\left[\exp\left(\int_0^1\lVert C_s^{1/2}\text{Cov}(\nu_s)C_s^{1/2}\rVert_{\text{OP}}\ \text{d}s\right)\right]^{-1}\ge\varepsilon\cdot\exp\left(-2\lVert J\rVert_{\text{OP}}\int_0^1\alpha(t)\ \text{d}t\right).$$
Lemma \ref{Chen-Eldan-Spectral} follows.
\end{proof}
\begin{proof}[Proof of Lemma \ref{comp5-6}]
Let $J$ be the adjacency matrix of the graph induced by the vertices in $B$. Then, $\lVert J\rVert_{\text{OP}}\le C'd$, due to condition 4 of Theorem \ref{main-thm-determ} and the well-known inequality
\begin{align}\label{OP-norm}
\lVert A\rVert_{\text{OP}}\le\sup\limits_{i\in[n]}\SL{j=1}^n|a_{ij}|,
\end{align}
 for a symmetric matrix $A\in\R^{n\times n}$. Then, the matrix $\tilde{J}=J+2C'dI_{|B|}$ is positive-definite, and the Ising model with interaction matrices $J,\tilde{J}$ are identical. We now bound $\lVert\text{Cov}(\nu_{t,u})\rVert_{\text{OP}}$, for any $t,u$, using inequality (\ref{OP-norm}) combined with Theorem \ref{Weitz} and Lemma \ref{spin-corr}. Note that, since all vertices of $B$ have degree at most $C'd$, the assumptions of Lemma \ref{spin-corr} hold: All the vertices that are conditioned to have a fixed spin in the tree of Self-Avoiding Walks are vertices of $B$. Moreover, observe that for any $t>0$, the measure $\nu_{t,u}$ is an Ising model that, on each edge which has both vertices in $B$, puts inverse temperature $\beta(1-t)$. So, keeping in mind (\ref{cov-on-trees}), we find that for some constant $C(d)>0$ that depends only on $d$,
\begin{align*}
\lVert\text{Cov}(\nu_{t,u})\rVert_{\text{OP}}\le\ &\sup\limits_{v\in B}\SL{y\in B}\text{Cov}_{\nu_{t,u}}(\sigma_v,\sigma_y)\\\overset{(\ref{0-max-cov})}{\le}&\sup\limits_{v\in B}\SL{y\in B}\E_{\nu_{t,0}}(\sigma_v\sigma_y)\\ \overset{(\ref{T-SAW-rel})}{\le}&\sup\limits_{v\in B}\SL{y\in B}\E_{T_{\text{SAW}}(G,v)}(\sigma_\rho|\sigma_{c,y})\\
\overset{\ref{spin-corr}}{\le} & C(d)\cdot\sup\limits_{v\in B}\SL{y\in B}\ \SL{y'\in T_{\text{SAW}}:\phi(y')=y}\E_0(\sigma_v|\sigma_{y'}=+)
\\\overset{(\ref{cov-on-trees})}{\le}&
C(d)\cdot\sup\limits_{v\in B}\SL{\ell=1}^n\theta^\ell\cdot\SL{p}\left(\dfrac{\tanh(\beta(1-t))}{\theta}\right)^{\#\text{good}(p)},
\end{align*}
where the sum is over all Self-Avoiding paths $p$ originating at $v$ and $\#\text{good}(p)$ indicates the number of edges of $p$ that connect vertices in $B$. Consider a vertex $v\in B$. Condition 4 of Theorem \ref{main-thm-determ} implies
\begin{align*}
&\SL{\ell=1}^n\theta^{\ell}\cdot\SL{p}\left(\dfrac{\tanh(\beta(1-t))}{\theta}\right)^{\#\text{good}(p)}\\\le\ &\SL{\ell=1}^{\delta\log_d(n)}d^{-\ell}S_v^{(\ell)}+\SL{\ell\ge\delta\log_d(n)}\theta^{0.4\ell}\cdot\tanh(\beta(1-t))^{0.6\ell}\cdot S_v^{(\ell)}\\\le \ & C'\delta\log_d(n)+\SL{\ell\ge\delta\log_d(n)}\theta^{0.4\ell}\cdot\tanh(\beta(1-t))^{0.6\ell}\cdot\sup\limits_{v\in B}S_v^{(\ell)}=:\alpha(t).
\end{align*}
Because of the fact that
$$\int_0^1\tanh(\beta(1-t))^{0.6\ell}\ \text{d}t=\int_0^\theta y^{0.6\ell}\cdot\dfrac{1}{\beta\cdot(1-y^2)}\ \text{d}y\le \dfrac{c}{\ell}\cdot\theta^{0.6\ell},$$
we get that
$$\int_0^1\alpha(t)\ \text{d}t\le C'\delta\log_d(n)+c\cdot\SL{\ell=1}^nd^{-\ell}\cdot\dfrac{1}{\ell}\cdot\sup\limits_{v\in B}S_v^{(\ell)}\le C'''\cdot\log(n),$$
which together with Lemma \ref{Chen-Eldan-Spectral} and Theorem \ref{rel-spec-mix} implies the desired result.
\end{proof}
\section{Structural Results for $G(n,d/n)$}
Before explaining the proof of Proposition \ref{ER-whp-prop}, we introduce the notion of an exploration process on a graph, which will be useful throughout this section.\\\\ 
Consider a vertex $v\in V$ and a subset $I\subseteq V$. The exploration process of the neighborhood of $v$ avoiding $I$ is the process that evolves as follows: At time $t=0$, vertex $v$ is active, vertices in $I$ are inactive and the rest of the vertices are neutral. At each time $t\ge1$, we choose the active vertex which has been active for the longest time (if more than one vertex satisfy this property, choose one of them arbitrarily). The neighborhood of the chosen vertex among all neutral vertices is explored. All neighbors of the chosen vertex become active, the chosen vertex becomes neutral and the rest of the neutral vertices remain neutral. The process ends when there is no active vertex. Often, we will stop this process at an earlier time, which we will specify in each different case.\\\\
In the case of an Erdös-Rényi graph $G(n,d/n)$, at time $t$, there is one vertex that becomes inactive from active and the number of neutral vertices that become active is $\text{Bin}(N_t,d/n)$, where $N_t$ is the number of neutral vertices at time $t$, which, conditional on the value of $N_t$, is independent of the rest of the process. Therefore, it is evident that the exploration process is stochastically dominated by the growth of a $\text{Bin}(n,d/n)$ Galton-Watson tree.\\\\
Occasionally in this section, we will be working under a no-tangle event $A_n$, a high-probability event describing certain characteristics of neighborhoods of $G(n,d/n)$.\\\\
For a vertex $v\in V$, consider the exploration process. Let $\tau_v,\tau_v'$ be the first times we have discovered sets of the form $B(v,r)$ (for some $r$ which we denote $r_v,r_v'$ from now on) which have $\ge n^{\delta/10}$ and $\ge n^{\delta/20}$ vertices, respectively. These times are $<\infty$ if, and only if, $v$ is in the large connected component. If $\tau_v=\tau_v'=\infty$, set $r_v=r_v=\infty$ and $B(v,r_v)=B(v,r_v')$ to be the connected component of $v$. Of course, if $K$ is large enough and $r_v,r_v'<\infty$, $r_v'\le r_v<K\log_d(n)$ for all $v$, with high probability. 
\begin{claim}
With high probability, for every $v\in V$, either $r_v=r_v'=\infty$ or $r_v'<r_v$.
\end{claim}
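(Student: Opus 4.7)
The plan is to recast the event $\{r_v = r_v' < \infty\}$ as a ``single-layer jump'' in the BFS exploration, bound such a jump by concentration of the branching in $G(n,d/n)$, and then take a union bound over $v$ and the radius.

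First, by the definitions of $r_v,r_v'$ one has $r_v'\le r_v$ whenever both are finite, and the equality $r_v' = r_v$ is equivalent to the existence of an integer $r\ge 1$ with
\[
|B(v,r-1)| < n^{\delta/20}\quad\text{and}\quad |B(v,r)| \ge n^{\delta/10},
\]
in which case the shell $\partial B(v,r)$ has size at least $n^{\delta/10}-n^{\delta/20}\ge \tfrac12 n^{\delta/10}$. Using the high-probability bound $r_v<K\log_d(n)$ noted immediately before the claim, I may restrict attention to $r\le K\log_d(n)$.

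Next, I would run the BFS exploration from $v$ and condition on the full revealed structure at the moment $B(v,r-1)$ has been exhausted, on the event $\{|B(v,r-1)|<n^{\delta/20}\}$. By the usual Markov property of the exploration in $G(n,d/n)$, the remaining potential edges are still independent Bernoulli$(d/n)$, so $|\partial B(v,r)|$ is stochastically dominated by a $\mathrm{Bin}(n\cdot n^{\delta/20},d/n)$ random variable whose mean is at most $d\, n^{\delta/20}$. Since the threshold $\tfrac12 n^{\delta/10}$ is polynomially larger than this mean, the standard Chernoff estimate $\PR(X\ge t)\le(e\mu/t)^t$ yields
\[
\PR\!\left(|\partial B(v,r)|\ge \tfrac12 n^{\delta/10}\ \Big|\ |B(v,r-1)|<n^{\delta/20}\right)\le \exp\!\left(-c\, n^{\delta/10}\log n\right),
\]
for some constant $c=c(d)>0$.

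Finally, a union bound over the $n$ choices of $v$ and the at most $K\log_d(n)$ choices of $r$ shows that the probability of any single-layer jump occurring in the graph is bounded by $n\cdot K\log_d(n)\cdot\exp(-c\,n^{\delta/10}\log n)=o(1)$, which gives the claim. No step is a serious obstacle; the only point requiring care is to preserve the Markovian structure of the exploration process after conditioning on $\{|B(v,r-1)|<n^{\delta/20}\}$, but this is a standard feature of BFS explorations in $G(n,d/n)$ and is discussed in the paragraphs preceding the claim.
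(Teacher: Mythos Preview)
Your proposal is correct and follows essentially the same approach as the paper's proof: both reduce the event $\{r_v=r_v'<\infty\}$ to a single BFS layer in which the discovered set jumps from below $n^{\delta/20}$ to at least $n^{\delta/10}$, bound that jump by a binomial with mean of order $n^{\delta/20}$, apply Chernoff, and union-bound over $v$ and the radius. Your write-up is in fact slightly more careful than the paper's, which phrases the jump as occurring ``at a step of the process'' rather than across an entire shell; your formulation in terms of $|\partial B(v,r)|$ and the explicit domination by $\mathrm{Bin}(n\cdot n^{\delta/20},d/n)$ is the cleaner way to say it.
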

\begin{proof}
For each $v$, $r_v=r_v'<\infty$ implies that there exists a step of the process at which: the exploration process had previously discovered at most $n^{\delta/20}$ vertices, and at that step discovered at least $n^{\delta/10}-n^{\delta/20}$ points. This event has probability $e^{-\Omega(n^{\delta/10})}$, due to Chernoff's inequality. Taking a union bound over the possible values of $r_v$ and vertices $v$ implies the claim.
\end{proof}
For any graph $H$, we denote by $\text{tx}(H)=|E(H)|-|V(H)|+1$, i.e. the tree-excess of $H$. For some $\delta,c,c_0$ to be specified later, let $A_n$ be described by
$$A_n=A_n^{(1)}\cap A_n^{(2)}\cap A_n^{(3)}\cap A_n^{(4)},$$
where
\begin{align*}
A_n^{(1)}&=\left\{\sup\limits_{r\le\delta\log_d(n)}|B(v,r)|\cdot d^{-r}\le c\cdot\log_d(n),\ \forall v\in V\right\},\\
A_n^{(2)}&=\left\{\text{tx}(B(v,\delta\log_d(n))\le1,\ \forall v\in V\right\},
\\ A_n^{(3)}&=\left\{\text{tx}(B(v,r_v)\le1,\ \forall v\in V\right\}\ \ \text{and}\\
A_n^{(4)}&=\left\{|\partial B(v,r_v)|\ge c_0\cdot n^{\delta/10},\ \forall v\in V:\tau_v<\infty\right\}.
\end{align*}
We claim that for some $\delta,c,c_0$, the event $A_n$ is a high-probability event. This is true for $A_n^{(1)}$ and $A_n^{(2)}$, due to Lemma 2.5 of \cite{Mossel-Sly}. In Subsection \ref{4.3}, we prove that for some $c_0>0$ $A_n^{(3)}$ and $A_n^{(4)}$ are also high-probability events, verifying that $A_n$ is a high-probability event, for suitable values of $\delta,c,c_0$.
\subsection{Properties of the Partition}\label{4.1}
In this subsection, we prove that under the partition in Definition \ref{B-defn}, a graph drawn from $G(n,d/n)$ has properties 1-3 of Theorem \ref{main-thm-determ} with high probability. The property whose proof is the hardest is 3. In fact, as we will explain later in detail, the proof of property 3 contains the proof of property 2 as a special case, while property 1 will follow from property 3 and the fact that $A_n$ is a high-probability event.\\
\begin{prop}\label{almost-all-B}
With high probability in $G(n,d/n)$, for some $C>0$ sufficiently large, every Self-Avoiding Path $p$ of length $\ell\ge\delta\log_d(n)$, satisfies the inequality
$$\#\text{good}(p)\ge0.6\cdot\ell,$$
where $\#\text{good}(p)$ indicates the number of edges of $p$ connecting vertices in $B$.
\end{prop}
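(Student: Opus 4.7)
The proof proceeds by a union bound over self-avoiding paths of length $\ell\ge\delta\log_d(n)$. Because each $A$-vertex is incident to at most two edges of a SAW $p$, it suffices to show that with high probability every such SAW has at most $0.2\ell$ vertices in $A$, and then sum the resulting estimate over $\ell\ge\delta\log_d(n)$.

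The first ingredient is a pointwise tail bound $\PR(v\in A)\le q(C)$ with $q(C)\to 0$ as $C\to\infty$. By Definition~\ref{B-defn}, the event $v\in A$ is $\max_{r\le\delta\log_d(n)}|\partial B(v,r)|\cdot d^{-r}>C$; viewing the BFS exploration of $B(v,\cdot)$ as stochastically dominated by a $\mathrm{Bin}(n,d/n)$ Galton--Watson tree (with correction that is negligible on $A_n$) and applying Doob's maximal inequality to the martingale $M_r=Z_r/d^r$ yields $q(C)=O(C^{-1})$.

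The main step is a joint estimate along a SAW. Fixing $\ell\ge\delta\log_d(n)$, a self-avoiding sequence $p=(v_0,\ldots,v_\ell)$, and a subset $S\subseteq\{0,\ldots,\ell\}$ of size $\lceil 0.2\ell\rceil$, the goal is to bound
$$\PR\bigl(p\text{ is realized in }G,\ v_i\in A\ \forall i\in S\bigr)\le(d/n)^\ell\cdot q_*(C)^{c|S|}$$
for some absolute $c>0$ and a function $q_*(C)$ that can be made arbitrarily small by taking $C$ large. Conditioning on the $\ell$ edges of $p$ being present costs a factor $(d/n)^\ell$; the remaining edges of $G$ then form an Erd\H{o}s--R\'enyi graph on unrevealed pairs, and under $A_n^{(2)}$ each witness ball $B(v_i,r_i)$ is a tree of excess at most $1$. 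The plan is to reveal these witnesses in path-order via BFS and argue that each certification $v_i\in A$ forces a fresh subtree of vertices and edges, producing a factor $q_*(C)^c$ per element of $S$.

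The principal obstacle is controlling overlaps between witness balls of nearby $v_i$'s on $p$, since a single atypical local configuration could in principle certify many $A$-memberships at once. I plan to address this by combining two observations: for positions at pairwise path-distance greater than $2\delta\log_d(n)$, the witness balls are disjoint and the certifications decouple given the edges of $p$; for closely spaced positions, the tree-excess bound from $A_n^{(2)}$ forces each additional certification to contribute at least one edge to an unexplored vertex, yielding a multiplicative saving of order $d/n$ that, after summing vertex choices, can be absorbed into a factor of $q_*(C)^c$ when $C$ is taken large. Combining the resulting probability estimate with the count $n\cdot d^\ell\cdot\binom{\ell+1}{0.2\ell}$ of (sequence, subset) pairs, and summing over $\ell\ge\delta\log_d(n)$, then yields the proposition, provided $C$ is chosen sufficiently large in terms of $d$.
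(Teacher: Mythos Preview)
Your overall plan---union bound over (path, bad-subset) pairs, with a per-element saving $q_*(C)^c$---would work if the factorization were true, but the step handling closely spaced positions is where the argument breaks. For a path of length $\ell=\delta\log_d(n)$ (the only length you really need, by concatenation), \emph{all} witness balls $B(v_i,\cdot)$ have radius up to $\delta\log_d(n)$ and hence can overlap pairwise; the ``far apart'' clause contributes nothing, and the entire burden falls on the close case. There the events $\{v_i\in A\}$ are strongly positively correlated: a single atypical subtree (e.g.\ one high-degree vertex at distance $r$ from the path) can push $|\partial B(v_i,r')|>Cd^{r'}$ for a whole block of consecutive $v_i$'s without forcing any new edge for the additional certifications. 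The tree-excess bound from $A_n^{(2)}$ constrains the topology of each ball but says nothing about the conditional cost of certifying $v_{i+1}\in A$ given $v_i\in A$. And even if you could extract one fresh edge per extra certification, the resulting $d/n$ factor does not depend on $C$, so it cannot be ``absorbed into a factor of $q_*(C)^c$'' as you write; the binomial $\binom{\ell}{0.2\ell}$ and the path count would not be beaten.

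The paper sidesteps the correlation entirely. It explores from each $v_i$ \emph{away from the path and from previously discovered vertices}, producing independent variables $Y_w'$ (each dominated by $\max_r Z_{w,r}d^{-r}$ for a $\mathrm{Bin}(n,d/n)$ Galton--Watson tree). The key identity is that $|B(v,r)|$ is controlled by $d^r\sum_{w\in p}Y_w'\,d^{-\mathrm{d}_p(w,v)}$, so $v\in A$ implies $\sum_w Y_w'\,d^{-\mathrm{d}_p(w,v)}\gtrsim C$. Summing this over the bad $v$'s and using $\sum_v d^{-\mathrm{d}_p(w,v)}\le 2(1-d^{-1})^{-1}$ converts ``$\ge 5\%$ bad vertices'' into the single linear event $\sum_{w\in p}Y_w'\ge C'\ell$ on \emph{independent} variables, for which an exponential-moment bound (Lemma~\ref{GW-study}) gives $e^{-c(C)\ell}$ with $c(C)\to\infty$. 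This linearization is the idea your proposal is missing.
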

\begin{proof}
We prove that if the constant $C>0$ in Definition \ref{B-defn} is large enough, and for some small $\varepsilon$, with high probability, all paths $p$ of length $\ell=\delta\log_d(n)$ have at least 90\% of their vertices in $B$. Call a path $p$ \textit{bad}, if it does not have this property. Proving that there are no bad paths implies that at least 80\% of the edges of $p$ connect vertices in $B$, and by concatenation, for each path of length $\ge\delta\log_d(n)$ will have at least 60\% of edges connecting vertices in $B$. Call a path $p$ of length $\ell$ \textit{bad} if it does not have this property.\\\\ 
Let $p$ be a length $\ell$ Self-Avoiding path consisting of vertices $v_1,v_2,...,v_\ell$. We explore the $\delta\log_d(n)$-neighborhood of $p$: First, run the exploration process from $v_1$ avoiding $p$, stopped when all active vertices are at distance $\delta\log_d(n)$ (or when there are no active vertices, whichever happens first). Then, for each $i\in\{2,3,...,\ell\}$, run the exploration process avoiding $p$ and all vertices discovered in a previous process. Each process stops when all active vertices are at distance $\delta\log_d(n)$ of the respective $v_i$ (or, similarly, when there are no active vertices, depending on which happens first).\\\\ 
Let $B_{w,p}(k)$ be the set of vertices reachable from $w$ with a path of length $k$ not having an edge in $p$ discovered in the exploration process. For each $w\in p$, let $$Y_w=\max\limits_{r\le\delta\log_d(n)}|B_{w,p}(r)|\cdot d^{-r}\ \ \text{and}\ \ Y_w'=\max\limits_{r\le\delta\log_d(n)}Z_{w,r}\cdot d^{-r},$$ where $Z_{w,r}$ is the number of vertices of a Galton-Watson tree rooted at $w$ at generation $r$, with offspring distribution Bin$(n,d/n)$. Because of the way the exploration process is run,
\[
Y_w\preceq \max\limits_{r\le\delta\log_d(n)}d^{-r}\sum_{r'\leq r}Z_{w,r'} \leq (1-d^{-1})^{-1}\cdot Y_w',
\]
and all of the $Y_w'$ are independent.\\\\
Next, we reveal all the edges within the $\delta\log_d(n)$ neighborhood of $p$. On the event $A_n$, there is at most one pair $\{v_1,v_2\}$ of vertices in $p$ such that $B_{v_1,p}(\delta\log_d(n))\cap B_{v_2,p}(\delta\log_d(n))\neq\varnothing$. Also, due to Lemma 2.5 of \cite{Mossel-Sly}, $Y_{v_i}\le c_0\cdot\log_d(n)$. Assume $v\in p$ is such that $\text{d}_p(w,v_i)\ge\log\log(n)^2$. The number of vertices in the $r$-neighborhood of $v$ are at most
\begin{align*}
\SL{w\in p, \text{d}_p(w,v)\le r}|B_{w,p}(r-\text{d}_p(w,v))|&\le2c_0\log_d(n)+d^r\SL{w\in p, w\neq v_i}Y_w\cdot d^{-\text{d}_p(w,v)}\\&\le2c_0\log_d(n)+(1-d^{-1})^{-1}\cdot d^r\SL{w\in p, \neq v_i}Y_w'\cdot d^{-\text{d}_p(w,v)},
\end{align*}
if $r\ge\log\log(n)^2$ and $(1-d^{-1})^{-1}\cdot d^r\SL{w\in p, \neq v_i}Y_w'\cdot d^{-\text{d}_p(w,v)}$, otherwise. In any case, if $v\in A$ there exists some $r\le\delta\log(n)$ for which $|B_r(v)|>C\cdot d^r$, which implies
$$\SL{w\in p, w\neq v_i}Y_w'\cdot d^{-\text{d}_p(w,v)}>C-(1-d^{-1})^{-1}-1.$$
Now, having at least 5\% bad vertices among $\{v\in p:\text{d}_p(v,v_i)\ge\log\log(n)^2\}$ implies that
\begin{align*}
\SL{v\in p,\text{d}_p(v,v_i)\ge\log\log(n)^2}\SL{w\in p, w\neq v_i}Y_w'\cdot d^{-\text{d}_p(w,v)}&>0.05\cdot\left(\ell-4\log\log(n)^2\right)\cdot(C-(1-d^{-1})^{-1}-1)\\\Rightarrow\ 
\SL{w\in p, w\neq v_i}Y_w'&\ge\dfrac{d-1}{30d}\cdot\ell\cdot(C-(1-d^{-1})^{-1}-1).
\end{align*}
Notice also that the part we ignored in $p$ has size $O(\log\log(n)^2)$, making it impossible to have at least 10\% bad vertices in $p$ without having at least 5\% bad vertices in the part of $p$ we are studying. Using Lemma \ref{GW-study} which we prove below and large deviations techniques we find that for every $C'>0$,
$$\PR\left(\SL{w\in p, w\neq v_i}Y_w'\ge C'\cdot\ell\right)\le\exp\left(\left(-C'/2+\log(c_2)\right)\ell\right).$$
Putting
$$C':=\dfrac{d-1}{30d}\cdot(C-(1-d^{-1})^{-1}-1)$$
and making $C$ large enough, so that $C'/2-\log(c_2)>\dfrac{2\log(d)}{\delta}+1$,
we get that
$$\PR(p\ \text{is bad}\cap A_n|\ p\in G)\le\PR\left(\SL{w\in p, w\neq v_i}Y_w'\ge C'\cdot\ell\right)\le\dfrac{1}{n^2\cdot d^\ell}.$$
So, if $N_b$ is the number of bad paths present in $G$, on the event $A_n$ we get
\begin{align*}
\E\left(N_b\cdot\mathbf{1}_{A_n}\right)&=\SL{p}\PR(p\ \text{is bad}\cap p\in G\cap A_n)=\SL{p}\left(\dfrac{d}{n}\right)^\ell\cdot\PR(p\ \text{is bad}\cap A_n|\ p\in G)\\&\le\left(\dfrac{d}{n}\right)^\ell\cdot\dfrac{1}{n^2\cdot d^\ell}\cdot n^{\ell+1}\le\dfrac{1}{n}.
\end{align*}
The proposition follows.
\end{proof}
We turn to the study of the $Y_w'$.
\begin{lem}\label{GW-study}
For the random variables $Y_w'$, we have
$$\E(e^{Y_w'/2})\le4e^2:=c_2\in(0,\infty),$$
if $n$ is large enough.
\end{lem}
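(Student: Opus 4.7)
The plan is to exploit the martingale structure of the scaled generation sizes. Setting $M_r := Z_{w,r}/d^r$, the process $(M_r)_{r \ge 0}$ is a non-negative martingale with $M_0 = 1$, since the offspring distribution $\mathrm{Bin}(n, d/n)$ has mean $d$. The quantity $Y_w'$ is then the running maximum $\max_{r \le R} M_r$ with $R := \delta \log_d(n)$.

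First, I would reduce the running-maximum bound to a single-time exponential moment estimate. Applying Doob's $L^2$ maximal inequality to the non-negative submartingale $(e^{M_r/4})_{r \ge 0}$ yields
$$\E(e^{Y_w'/2}) \;=\; \E\Bigl(\max_{r \le R}(e^{M_r/4})^2\Bigr) \;\le\; 4\,\E(e^{M_R/2}),$$
so it suffices to show $\E(e^{M_R/2}) \le e^2$ uniformly in $R$.

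Next, I would set up the MGF recursion. Let $a_r(s) := \E(e^{s M_r})$. Conditioning on $Z_{r-1}$, noting that $Z_r \mid Z_{r-1}$ is a sum of $Z_{r-1}$ i.i.d.\ copies of $\mathrm{Bin}(n, d/n)$ with MGF $\phi_n(u) = (1 - d/n + (d/n)e^u)^n \le e^{d(e^u - 1)}$ (by $(1+x)^n \le e^{nx}$), and rewriting in terms of $M_{r-1} = Z_{r-1}/d^{r-1}$, we obtain
$$a_r(s) \;\le\; a_{r-1}\bigl(T_r(s)\bigr), \qquad T_k(s) := d^k(e^{s/d^k} - 1).$$
Iterating from $r = R$ down to $r = 0$, with base case $a_0(u) = e^u$, produces
$$a_R(s) \;\le\; \exp\bigl(T_1 \circ T_2 \circ \cdots \circ T_R(s)\bigr).$$

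The core estimate is then to show that $u_0 := T_1 \circ \cdots \circ T_R(1/2) \le 2$ uniformly in $R$. Writing $u_R := 1/2$ and $u_{k-1} := T_k(u_k)$, I would use the Taylor expansion $T_k(u) = u + u^2/(2 d^k) + O(u^3/d^{2k})$ together with a bootstrap argument: if the iterates remain in a compact window around $1/2$, each step adds at most $O(u_k^2/d^k)$, so the cumulative growth is controlled by the geometric sum $\sum_{k \ge 1} d^{-k} = 1/(d-1)$. Closing the bootstrap yields $u_0 \le 2$, and hence $\E(e^{M_R/2}) \le e^2$; combined with the Doob step, this produces the desired $\E(e^{Y_w'/2}) \le 4 e^2$.

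The main obstacle is the careful bookkeeping in the bootstrap, to verify that the iterates $u_k$ do not escape the regime where the second-order Taylor expansion of $T_k$ is accurate. The hypothesis that $n$ is large enough enters precisely here: it ensures that the initial scale $u_R / d^R = 1/(2 d^R)$ of the underlying iteration $u_{k-1}/d^{k-1} = d(e^{u_k/d^k} - 1)$ is small enough for the linear approximation to propagate cleanly through all $R$ steps without blowing up to the super-exponential regime of the fixed-point equation.
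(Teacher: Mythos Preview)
Your approach is essentially the same as the paper's: Doob's maximal inequality reduces to a single-time exponential moment, and then one iterates the MGF recursion for the branching process. The only substantive difference is in how the iterated exponent is controlled. The paper upper-bounds the exact map $s\mapsto d(e^{s}-1)$ by the quadratic $s\mapsto ds(s+1)$, sets $a_0=d^{-j}$, and then bounds $a_j$ via a neat reciprocal-telescoping identity: from $a_i=da_{i-1}(a_{i-1}+1)$ one gets $\tfrac{1}{da_{i-1}}-\tfrac{1}{a_i}=\tfrac{1}{d(a_{i-1}+1)}$, and summing $d^{-(j-i)}$ times these differences telescopes to $\tfrac{1}{a_j}-\tfrac{1}{d^j a_0}$. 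Your Taylor-expansion bootstrap is an equivalent way to reach the same conclusion.

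One point to tighten: your explanation of where ``$n$ large enough'' enters is not quite right. Making $R$ large does \emph{not} make the iterates safer; in fact as $R\to\infty$ your $u_0$ converges to $\psi(1/2)=\log\E(e^{W/2})$ for the Poisson($d$) Galton--Watson limit $W$, and since $\mathrm{Var}(W)=1/(d-1)$ this limit exceeds $2$ once $d$ is close enough to $1$. So the bootstrap with threshold $2$ cannot close uniformly in $d$; what one actually gets (by either your argument or the paper's) is a finite bound depending on $d$. This is harmless for the application, since the lemma is only used via $\E(e^{Y_w'/2})\le c_2(d)$ to get a large-deviation bound in which the constant is then absorbed into the choice of $C'$.
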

\begin{proof}
First of all, the process $(e^{Z_{w,r}\cdot d^{-r}})_r$ is a submartingale (due to it being the application of a convex function on a martingale), therefore Doob's maximal inequality holds. So,
$$\E(e^{Y_w'/2})\le4\cdot\E\left(e^{Z_{w,j}\cdot d^{-j}}\right),$$
for $j=\delta\log_d(n)$. However, we know that, conditioned on the value of $Z_{w,i-1}$, $Z_{w,i}$ has a binomial distribution with parameters $n\cdot Z_{w,i-1}$ and $d/n$. Therefore, for any $s\in(0,1)$,
\begin{align*}
\E\left(e^{sZ_{w,j}}|Z_{w,j-1}\right)&=\left(1+\dfrac{d}{n}\left(e^{s}-1\right)\right)^{n\cdot Z_{w,j-1}}\le\exp\left(d\cdot Z_{w,j-1}\cdot(e^{s}-1)\right)\\&\le\exp\left(ds(s+1)\cdot Z_{w,j-1}\right).
\end{align*}
Consider the sequence defined as $a_0=s$ and for $1\le i\le j$, $a_i=da_{i-1}(a_{i-1}+1)$. The above inequality immediately implies
$$\E\left(e^{sZ_{w,j}}\right)=\E\left(e^{a_0Z_{w,j}}\right)\le\E\left(e^{a_1Z_{w,j-1}}\right)\le\cdots\le e^{a_j}.$$
We claim that $a_j\le\dfrac{d^j\cdot s}{1-\frac{s}{d(d-1)}}$. This will imply the desired result for $s=d^{-j}$. Indeed, from the recursive relation we can see that $a_i>da_{i-1}$ for all $i$, so for each $i$,
$$\dfrac{1}{da_{i-1}}=\dfrac{1}{a_i}+\dfrac{a_{i-1}}{a_i}<\dfrac{1}{a_i}+\dfrac{1}{d}.$$
Using all of these inequalities for $1\le i\le j$, we get that
\begin{align*}
&\dfrac{1}{a_j}-\dfrac{1}{d^ja_0}=\SL{i=1}^jd^{-(j-i)}\left(\dfrac{1}{a_i}-\dfrac{1}{da_{i-1}}\right)\ge-\dfrac{1}{d}\SL{i=1}^jd^{-(j-i)}\ge-\dfrac{d^{-j}}{d(d-1)}\\
\Rightarrow\ &\dfrac{1}{a_j}\ge d^{-j}\left(\dfrac{1}{s}-\dfrac{1}{d(d-1)}\right),
\end{align*}
which concludes the proof of the lemma.
\end{proof}
\begin{cor}
With high probability, the connected components of $H$ have tree excess at most 1 and
$$\max\limits_\gamma\SL{v\in \gamma}\deg(v)\le C\log(n),$$
where the maximum runs over all Self-Avoiding paths $\gamma$ in $H$.
\end{cor}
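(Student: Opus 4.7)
The plan is to deduce both claims from Proposition~\ref{almost-all-B} together with the high-probability event $A_n^{(2)}$.

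\textbf{Tree excess.} Every edge of $H$ has at least one endpoint in $A$, so no self-avoiding path in $H$ contains a $B$--$B$ edge. By Proposition~\ref{almost-all-B}, with high probability every self-avoiding path in $G$ of length $\ge\delta\log_d(n)$ has a positive fraction of $B$--$B$ edges; combining these two facts, whp every self-avoiding path in $H$ has length $<\delta\log_d(n)$. Consequently each connected component $K$ of $H$ has $G$-diameter strictly less than $\delta\log_d(n)$, and therefore $K\subseteq L:=G[B(v,\delta\log_d(n))]$ for any $v\in K$. On the high-probability event $A_n^{(2)}$, $\text{tx}(L)\le 1$, and since $L$ is connected (being a ball in $G$), the cycle space of the connected subgraph $K$ embeds into that of $L$, giving $\text{tx}(K)\le\text{tx}(L)\le 1$.

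\textbf{Degree sum.} Since any self-avoiding path $\gamma\subseteq H$ has length $\ell<\delta\log_d(n)$ and $\deg_H(v)\le\deg_G(v)$, it suffices to show that whp every self-avoiding path $\gamma$ in $G$ of length $\ell\le\delta\log_d(n)$ satisfies $\SL{v\in\gamma}\deg_G(v)\le C\log(n)$. I proceed by a first-moment union bound: there are at most $n^{\ell+1}$ ordered candidate paths of length $\ell$, each present in $G$ with probability $(d/n)^\ell$. Conditional on $\gamma\subseteq G$, the sum $\SL{v\in\gamma}\deg_G(v)$ equals $2\ell$ plus twice the number of non-path chords within $V(\gamma)$ plus the number of $G$-edges with exactly one endpoint in $V(\gamma)$; these extra terms are together stochastically dominated by twice a $\text{Bin}((\ell+1)n,d/n)$ variable, whose mean is $O(\log(n))$. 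A standard Chernoff bound then gives
$$\PR\left(\SL{v\in\gamma}\deg_G(v)>C\log(n)\,\Big|\,\gamma\subseteq G\right)\le n^{-K}$$
for any fixed $K$, provided $C$ is chosen large in terms of $d,\delta,K$. Picking $K>2+\delta$, the expected number of bad paths of length $\ell$ is at most $n^{\ell+1}(d/n)^\ell\cdot n^{-K}\le n^{1+\delta-K}=o(n^{-1})$, and summing over $\ell\in[1,\delta\log_d(n)]$ still yields $o(1)$.

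\textbf{Main obstacle.} The delicate point is obtaining $C\log(n)$ rather than the naive $\log^2(n)$ that would result from multiplying the $\delta\log_d(n)$ path-length bound by the typical maximum degree of $G(n,d/n)$. The resolution is that the total excess degree along a fixed path concentrates around its mean $\Theta(\log(n))$, and the Chernoff tail beats the $n^{1+\delta}$ first-moment cost once $C$ is taken large in $d$ and $\delta$; nothing finer about the structure of $A$ or $B$ is needed beyond what Proposition~\ref{almost-all-B} and $A_n^{(2)}$ already supply.
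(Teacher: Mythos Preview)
Your proof is correct and follows essentially the same route as the paper's. For the tree-excess part you argue exactly as the paper does (paths in $H$ are short by Proposition~\ref{almost-all-B}, hence components sit inside small balls, hence $A_n^{(2)}$ applies). For the degree sum you use the same first-moment/Chernoff scheme; the only cosmetic difference is that the paper invokes $A_n$ to bound the within-path degree by $3$, whereas you absorb the chords directly into the binomial---both are fine.
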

\begin{proof}
At first, due to Proposition \ref{almost-all-B}, every connected component of the graph that is obtained from $G$ after deletion of the good edges is included in a neighborhood of radius at most $\delta\log_d(n)$ (as longer paths not connecting vertices in $B$ do not exist). Therefore, under the no-tangle event $A_n$, the tree excess of each connected component is at most 1. We turn to the second statement. It suffices to prove that for some $C>0$, with high probability, there exists no path $p$ of length $\ell\le\delta\log_d(n)$ whose vertices have sums of degrees $\ge C\log(n)$. Indeed, under the no-tangle event, each vertex has degree in the path at most 3 and as far as the degrees outside of the path, their sum is stochastically dominated by a $\text{Bin}(n\cdot\ell,d/n)$. Due to Chernoff's inequality, if $C>0$ is large enough,
\begin{align*}
\PR\left(\SL{v\in p}\deg(v)\ge C\log(n)\right)&\le\PR\left(\text{Bin}(n\cdot\ell,d/n)>C\log(n)/2\right)\\&\le\left(\dfrac{2e}{C}\right)^{C\log(n)/2}\\&\le n^{-2}\cdot d^{-\delta\log_d(n)}.
\end{align*}
Taking the expected number of paths satisfying this property will now finish the proof of the corollary, in the same way as at the end of the proof of Proposition \ref{almost-all-B}.
\end{proof}
\subsection{Non-Backtracking Walks in $G(n,d/n)$}\label{4.2}
Instead of bounding the number of Self-Avoiding Walks, we bound the number of walks that are initially (for the first $L=K\log_d(n)$ steps) Non-Backtracking and after that are Self-Avoiding. The reason for this is that we control the number of non-backtracking walks using a matrix representation and that this loses very little since initially Self-Avoiding Walks are almost equivalent to non-backtracking ones. Call $W$ the NB matrix of the graph, i.e. the matrix indexed by the directed edges of the graph, whose formula is
$$W_{ef}=\mathbf{1}_{e_2=f_1, e\neq f^{-1}},$$
for directed edges $e,f$. An important observation is that for each $k$, $(W^k)_{ef}$ is the number of Non-Backtracking Walks of length $k+1$ that start from $e$ and end at $f$. For two vertices $x,y\in V$ and a positive integer $k$, let $N_{xy}^{(k)}$ be the number of Non-Backtracking Walks from $x$ to $y$.
\begin{lem}\label{mixing-NB}
Let $v$ be the left Perron-Frobenius eigenvector of $W$. Then,
$$\dfrac{v(y)}{\lVert v\rVert_1}\cdot N_x^{(L)}-d^{(K+10)\log_d(n)/2}\le N_{xy}^{(L)}\le \dfrac{v(y)}{\lVert v\rVert_1}\cdot N_x^{(L)}+d^{(K+10)\log_d(n)/2}$$
for every $x,y\in V$, with high probability, where for a vertex $y\in V$ we set
$$v(y):=\SL{f:f_2=v}v(f).$$
\end{lem}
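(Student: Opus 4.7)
The plan is to perform a spectral analysis of $W$ and to exploit the known high-probability spectral gap of the non-backtracking operator on $G(n,d/n)$. For a vertex $x$, let $\mathbf{1}_x^{\text{out}}$ denote the indicator on directed edges of those with $e_1=x$, let $\mathbf{1}_y^{\text{in}}$ denote the indicator of edges with $f_2=y$, and let $\mathbf{1}$ be the all-ones vector on directed edges. By the counting interpretation of $W^{L-1}$,
$$N_{xy}^{(L)}=(\mathbf{1}_x^{\text{out}})^TW^{L-1}\mathbf{1}_y^{\text{in}}\quad\text{and}\quad N_x^{(L)}=(\mathbf{1}_x^{\text{out}})^TW^{L-1}\mathbf{1}.$$

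Let $\lambda_1$ be the Perron-Frobenius eigenvalue of $W$ and let $u,v$ be corresponding right/left eigenvectors normalized so that $v^Tu=1$. Write $W=\lambda_1 P+S$ with $P=uv^T$; since $PW=WP=\lambda_1 P$, we have $PS=SP=0$ and therefore $W^{L-1}=\lambda_1^{L-1}P+S^{L-1}$. From the definition $v(y)=\SL{f:f_2=y}v(f)$ we get $v^T\mathbf{1}_y^{\text{in}}=v(y)$ and $v^T\mathbf{1}=\lVert v\rVert_1$, so both leading contributions factor through the common scalar $(\mathbf{1}_x^{\text{out}})^Tu$ and
$$\lambda_1^{L-1}(\mathbf{1}_x^{\text{out}})^TP\mathbf{1}_y^{\text{in}}=\dfrac{v(y)}{\lVert v\rVert_1}\cdot\lambda_1^{L-1}(\mathbf{1}_x^{\text{out}})^TP\mathbf{1}.$$
Hence the leading terms cancel exactly in the difference $N_{xy}^{(L)}-\tfrac{v(y)}{\lVert v\rVert_1}N_x^{(L)}$, and the proof reduces to bounding the contribution of $S^{L-1}$ to the two bilinear forms.

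For the error term, I would invoke the standard result (Bordenave-Lelarge-Massoulié and subsequent refinements) that, with high probability in $G(n,d/n)$, every non-Perron eigenvalue of $W$ has absolute value at most $\sqrt{d}(1+o(1))$; accounting for the polynomial-in-$n$ conditioning of the (non-orthogonal) eigenbasis of $W$, this upgrades to $\lVert S^{L-1}\rVert_{\text{op}}\le d^{L/2}\cdot n^{O(1)}$. Combined with the bounds $\lVert\mathbf{1}_x^{\text{out}}\rVert_2,\lVert\mathbf{1}_y^{\text{in}}\rVert_2=O(\sqrt{\log n})$ (from the bounded-degree event $A_n^{(1)}$) and $\lVert\mathbf{1}\rVert_2=O(\sqrt{n})$ (from $|E(G)|=O(n)$ whp), each $S^{L-1}$-contribution is at most $d^{L/2}\cdot n^{O(1)}$. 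Using $\tfrac{v(y)}{\lVert v\rVert_1}\le 1$ and choosing the implicit constants so that $O(1)\le 5$, the total error fits within the claimed slack $d^{(K+10)\log_d(n)/2}=d^{L/2}\cdot n^5$. A union bound over $(x,y)$ preserves the whp conclusion at the cost of mildly enlarging $K$.

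The main obstacle is the spectral-gap bound for $W$ on sparse $G(n,d/n)$: the adjacency matrix does \emph{not} admit an analogous gap because of rare high-degree vertices, which is precisely why the non-backtracking operator is the right object here. A secondary subtlety is that $W$ need not be diagonalizable, so the operator-norm control of $S^{L-1}$ must be obtained via Jordan form or resolvent estimates rather than via a naive spectral decomposition; the generous slack $n^5$ comfortably absorbs any such polynomial corrections.
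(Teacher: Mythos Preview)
Your algebraic framework is exactly the paper's: write $W=\lambda_1 uv^*+R$ with $v^*R=Ru=0$, so $W^{L}=\lambda_1^{L}uv^*+R^{L}$; express $N_{xy}^{(L)}$ and $N_x^{(L)}$ as bilinear forms in $W^{L-1}$; observe that the rank-one contributions cancel in $N_{xy}^{(L)}-\tfrac{v(y)}{\lVert v\rVert_1}N_x^{(L)}$; and bound the residual by $\lVert R^{L}\rVert$ times the $\ell_2$-norms of the indicator vectors. Your vector-norm bookkeeping ($\lVert\mathbf 1_x^{\text{out}}\rVert_2=O(\sqrt{\log n})$, $\lVert\mathbf 1\rVert_2=O(\sqrt n)$) is also the right final step.

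The gap is precisely the step you flag as ``secondary'': passing from the eigenvalue bound $|\lambda_i(W)|\le\sqrt d(1+o(1))$ to $\lVert S^{L-1}\rVert_{\text{op}}\le d^{L/2}\cdot n^{O(1)}$. For a non-normal matrix this requires control on the conditioning of the (generalized) eigenbasis, and there is no off-the-shelf result guaranteeing that this conditioning is polynomial in $n$ for the non-backtracking matrix of $G(n,d/n)$; the slack $n^5$ does not obviously absorb it. The paper sidesteps this issue entirely. It invokes the Bordenave--Lelarge--Massouli\'e results not in their eigenvalue form but in the form of Propositions~10--11, which directly give a decomposition $W^\ell=\theta\zeta\check\varphi^*+J_\ell$ at a fixed scale $\ell\sim\varepsilon\log_d n$ with an \emph{operator-norm} bound $\lVert J_\ell\rVert\le(\log n)^c d^{\ell/2}$. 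Proposition~7 of the same paper then shows $\zeta\approx u$ and $\check\varphi\approx v/\lVert v\rVert$, so the two rank-one pieces match up to $O((\log n)^c d^{\ell/2})$ and hence $\lVert R^\ell\rVert=O((\log n)^c d^{\ell/2})$. Since $R^{k\ell}=(R^\ell)^k$, submultiplicativity gives $\lVert R^L\rVert=O((\log n)^{c'} d^{L/2})$ with no eigenbasis conditioning ever entering. Replacing your eigenvalue citation by this decomposition-of-$W^\ell$ input makes your argument complete and essentially identical to the paper's.
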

\begin{proof}[Proof of Lemma \ref{mixing-NB}]
Because of Propositions 10 and 11 in \cite{NBWalks}, we know that
\begin{align}\label{W-decomp-1}
W^\ell=\theta\zeta\check\varphi^*+J_\ell,
\end{align}
where:
\begin{itemize}
\item
$\ell\sim\varepsilon\log_d(n)$, for some $\varepsilon\in(0,1/6)$.
\item 
$\chi$ is the vector with $\chi(e)=1$ for each directed edge $e$, $\varphi=\dfrac{W^\ell\chi}{\lVert W^\ell\chi\rVert}$ and $\check\varphi$ is the vector for which $\check{\varphi}(e)=\varphi(e^{-1})$, for every $e$.
\item 
$\theta=\lVert W^\ell\check{\varphi}\rVert\in (c_0^{-1}\cdot d^\ell,c_0\cdot d^\ell)$ for some $c_0>0$.
\item 
$\zeta=W^\ell\check{\varphi}/\theta$.
\item 
$\langle\zeta,\check\varphi\rangle\ge c_0$ and $\lVert J_\ell\rVert\le\log(n)^c\cdot d^{\ell/2}$.
\end{itemize}
We also make use of Proposition 7 from \cite{NBWalks}.
\begin{prop}[Proposition 7 of \cite{NBWalks}]\label{prop7}
Let $A\in\mathcal{M}_n(\R)$, $\ell,\ell'$ be co-prime integers, $\lambda\neq0$ and $c_0,c_1>0$ such that: For each $k\in\{\ell,\ell'\}$ and some $x_k,y_k\in\R^n$, $J_k\in\mathcal{M}_n(\R),$
$$A^k=\lambda^kx_ky_k^*+J_k,\ \  \langle x_k,y_k\rangle\ge c_0\ \ \text{and}\ \ \lVert J_k\rVert<\dfrac{c_0^2}{2(\ell\vee\ell')c_1}\lambda^k.$$
Then, there exists a unit eigenvector $\psi$ for $\lambda_1(A)$ such that
$$\left\lVert\psi-\dfrac{x_\ell}{\lVert x_\ell\rVert}\right\rVert\le8c_0^{-1}\lVert J_\ell\rVert\cdot\lambda^{-\ell}.$$
\end{prop}
We apply Proposition \ref{prop7} to $W$ and $W^*$, for the decompositions (\ref{W-decomp-1}) and
$$(W^*)^\ell=\theta\check{\varphi}\zeta^*+J_\ell^*,$$
which follows directly from (\ref{W-decomp-1}). Also, we can write $W=\lambda_1uv^*+R$, where the vectors $u,v$ are the right and left (Perron-Frobenius) eigenvectors of $W$ for $\lambda_1$, normalized so that $\lVert u\rVert=\langle u,v\rangle=1$. With this decomposition of $W$, $v^*R=Ru=0$, which means that for any $\ell$,
\begin{align}\label{B-decomp-2}
W^\ell=\lambda_1^\ell uv^*+R^\ell.
\end{align}
Due to Proposition \ref{prop7},
\begin{align}
\lVert u-\zeta\rVert\ \ \text{and}\ \ \left\lVert\dfrac{v}{\lVert v\rVert}-\check\varphi\right\rVert = O(\log(n)^c\cdot d^{-\ell/2}).
\end{align}
From (\ref{W-decomp-1}) we can deduce that $J_\ell\check\varphi=0$, so
$$\theta\langle\zeta,\check\varphi\rangle\zeta=(W^\ell-J_\ell)\zeta=\lambda_1^\ell\cdot\zeta+W^\ell(\zeta-u)+\lambda_1^\ell(u-\zeta)-J_\ell\zeta.$$
Note that $\lVert W^\ell\rVert\le\theta+\lVert J_\ell\rVert=O(d^\ell)$, so taking norms in this last equation implies
$$\langle\zeta,\check\varphi\rangle=\dfrac{\lambda_1^\ell}{\theta}+O\left(\log(n)^c\cdot d^{-\ell/2}\right),$$
and 
$$\left\langle u,\dfrac{v}{\lVert v\rVert}\right\rangle=\langle\zeta,\check\varphi\rangle+\left\langle u-\zeta,\dfrac{v}{\lVert v\rVert}\right\rangle+\left\langle \zeta,\dfrac{v}{\lVert v\rVert}-\check\varphi\right\rangle=\dfrac{\lambda_1^\ell}{\theta}+O\left(\log(n)^c\cdot d^{-\ell/2}\right).$$ 
Therefore,
$$\lVert v\rVert=\dfrac{\theta}{\lambda_1^\ell}+O\left(\log(n)^c\cdot d^{-\ell/2}\right).$$
We are now ready to combine the two expressions for $W^\ell$ in order to bound the norm of $R^\ell$. Indeed:
\begin{align*}
\lVert R^\ell\rVert&\le\lVert J_\ell\rVert+\lVert J_\ell-R^\ell\rVert\\&\le
\lVert J_\ell\rVert+\lVert \theta\zeta\check\varphi^*-\lambda_1^\ell uv^*\rVert\\&\le
\lVert J_\ell\rVert+\theta\lVert \zeta\rVert\cdot\left\lVert \check\varphi-\dfrac{v}{\lVert v\rVert}\right\rVert+\theta\left\lVert\dfrac{v}{\lVert v\rVert}\right\rVert\cdot\lVert u-\zeta\rVert+\left|\dfrac{\theta}{\lVert v\rVert}-\lambda_1^\ell\right|\cdot\lVert uv^*\rVert\\&=O\left(\log(n)^c\cdot d^{\ell/2}\right).
\end{align*}
With this estimate, by raising (\ref{B-decomp-2}) to the power $K/\varepsilon$ (which may be large but is constant), we ensure that
$$W^{K\log_d(n)}=\lambda_1^{K\log_d(n)}uv^*+R^{K\log_d(n)},$$
where $\lVert R^{K\log_d(n)}\rVert=O\left(\log(n)^c\cdot d^{K\log_d(n)/2}\right)$. Analyzing the last relation tells us that
\begin{align}\label{no-of-NB}
N_{ef}^{(L)}=\lambda_1^L\cdot u(e)\cdot v(f)+(R^L)_{ef}
\end{align}
for directed edges $e,f$. Adding these relations over edges $e,f$ such that $e_1=x, f_2=y$ yields the result.
\end{proof}

We will need the fact that this error appearing in Lemma \ref{mixing-NB} is small enough for our purposes. Indeed, we will prove the following Lemma (which we will use at the very end of our proof):
\begin{lem}\label{small-error}
With high probability, if $K$ is large enough,
$$\SL{y}S_y^{(\ell-L)}\le d^{\ell-2L/3}.$$
\end{lem}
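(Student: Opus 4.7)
The plan is to recognize that $\SL{y} S_y^{(\ell-L)}$ is exactly the total number of self-avoiding walks of length $\ell-L$ in $G$ (each SAW contributes once to the count at its starting vertex). This reduces the lemma to a direct first-moment computation on $G(n,d/n)$ followed by Markov's inequality. The approach does not require any structural assumption on $G$ beyond the edge-independence in the Erdős-Rényi model, so it is essentially elementary.

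For the first-moment step, I would enumerate over ordered tuples $(v_0, v_1, \ldots, v_{\ell-L})$ of pairwise distinct vertices of $[n]$. By independence of the edges in $G(n,d/n)$, the probability that all $\ell-L$ consecutive pairs $v_i v_{i+1}$ are edges of $G$ equals $(d/n)^{\ell-L}$, while the number of such ordered tuples is at most $n^{\ell-L+1}$. Hence
$$\E\left[\SL{y} S_y^{(\ell-L)}\right] \le n^{\ell-L+1}\cdot (d/n)^{\ell-L} \le n\cdot d^{\ell-L}.$$

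I would then apply Markov's inequality with threshold $d^{\ell-2L/3}$, using $L=K\log_d(n)$ so that $d^{-L/3}=n^{-K/3}$. This gives
$$\PR\left(\SL{y} S_y^{(\ell-L)} > d^{\ell-2L/3}\right) \le \frac{n\cdot d^{\ell-L}}{d^{\ell-2L/3}} = n\cdot d^{-L/3} = n^{1-K/3}.$$
A union bound over the (at most) $n$ values of $\ell$ for which the statement is used contributes only a polynomial factor $n$, so the total failure probability is $n^{2-K/3}$. Choosing $K$ sufficiently large (e.g. $K>6$) makes this $o(1)$, which is exactly the "if $K$ is large enough" clause of the lemma.

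I do not anticipate a real obstacle here: the argument is a clean first-moment estimate and the only tuning is making $K$ large enough that Markov's bound absorbs the union bound. The one thing to be careful about is that we are bounding the sum $\SL{y}S_y^{(\ell-L)}$ rather than a per-vertex quantity $\sup_y S_y^{(\ell-L)}$; this is precisely why the first-moment computation is tractable, since the indicators for different ordered tuples are defined on independent edges and we can interchange the sum and expectation without any correlation issues.
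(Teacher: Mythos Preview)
Your proposal is correct and follows essentially the same approach as the paper: a first-moment bound $\E\left[\SL{y}S_y^{(\ell-L)}\right]\le n\cdot d^{\ell-L}$ followed by Markov's inequality, yielding failure probability $n^{1-K/3}$ for each $\ell$. The only (welcome) addition is that you make the union bound over the at most $n$ relevant values of $\ell$ explicit, leading to $K>6$ rather than the paper's $K>3$ stated for a single $\ell$.
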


\begin{proof}[Proof of Lemma \ref{small-error}]
For each $w$, there are at most $n^{\ell-L}$ Self-Avoiding paths in the complete graph, and each one of them has probability $(d/n)^{\ell-L}$ of appearing in $G$. So,
$$\E\left[\SL{y}S_y^{(\ell-L)}\right]\le n\cdot n^{\ell-L}\cdot\left(\dfrac{d}{n}\right)^{\ell-L}=n\cdot d^{\ell-L}.$$
By Markov's Inequality,
$$\PR\left(\SL{y}S_y^{(\ell-L)}>d^{\ell-2L/3}\right)\le n\cdot d^{-L/3}=n^{1-\frac{K}{3}},$$
which is $o(1)$ if $K>3$.
\end{proof}
\subsection{Neighborhood Structure}\label{4.3}
We begin this subsection by proving that indeed, $A_n$ is a high-probability event.
\begin{lem}\label{tx-1}
With high probability, for all vertices $v\in V$, $\text{tx}(B(v,r_v))\le1$.
\end{lem}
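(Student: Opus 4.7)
The plan is to run a modified BFS exploration from $v$ that, whenever a vertex $w$ is processed, examines edges from $w$ to every unprocessed vertex (active or neutral), and to decompose the tree excess into a piece detected by the exploration and a piece that remains hidden. After all vertices of $B(v,r_v-1)$ are processed, every edge of $B(v,r_v)$ with at least one endpoint in $B(v,r_v-1)$ has been examined, and the only edges of $B(v,r_v)$ that remain hidden are those whose two endpoints both lie at level exactly $r_v$. Writing $\mathrm{Rev}$ for the number of non-tree edges detected during the exploration and $\mathrm{Same}$ for the number of hidden edges inside level $r_v$, one has $\text{tx}(B(v,r_v))=\mathrm{Rev}+\mathrm{Same}$, so
$$\{\text{tx}(B(v,r_v))\ge 2\}\subseteq\{\mathrm{Rev}\ge 2\}\cup\{\mathrm{Same}\ge 1\}.$$

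First I would restrict to the high-probability event $\mathcal{E}$ that the maximum degree of $G$ is at most $C\log n$ and that $r_v<K\log_d n$ for every $v$ with $r_v<\infty$; both are standard for $G(n,d/n)$ (the second is recorded just before the lemma). On $\mathcal{E}$ the number of processed vertices is $T:=|B(v,r_v-1)|<n^{\delta/10}$, the active set size satisfies $A_s\le|B(v,r_v)|\le n^{\delta/10}(1+C\log n)$ at each step $s$, and the number $M_{r_v}:=|B(v,r_v)\setminus B(v,r_v-1)|$ of level-$r_v$ vertices is also $O(n^{\delta/10}\log n)$. (If $r_v=\infty$ then $v$ lies in a small component of size $O(\log n)$, every edge of the component gets revealed, and only the $\mathrm{Rev}$ term survives; the bounds below are then much more comfortable.)

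Conditional on the exploration, $\mathrm{Same}\sim \mathrm{Bin}\!\bigl(\binom{M_{r_v}}{2},d/n\bigr)$, so Markov gives $\PR(\mathrm{Same}\ge 1)\le\binom{M_{r_v}}{2}\cdot d/n=O(n^{\delta/5-1}\log^2 n)=o(1/n)$ for $\delta=1/10$. For $\mathrm{Rev}$, the key observation is that at step $s$ of the modified exploration the pairs $\{w_s,u\}$ with $u$ currently active have not been examined at any prior step, so their fates are independent $\mathrm{Bernoulli}(d/n)$ variables conditional on the history up to step $s$, and the resulting Bernoullis across all steps are mutually independent. A second-moment bound then gives
$$\PR(\mathrm{Rev}\ge 2)\ \le\ \E\!\left[\binom{\mathrm{Rev}}{2}\right]\ \le\ \binom{\sum_s A_s}{2}(d/n)^2\ \le\ O\!\left(n^{2\delta/5-2}\log^2 n\right)\ =\ o(1/n).$$
A union bound over $v\in V$, combined with $\PR(\mathcal{E}^c)=o(1)$, yields $\PR(\exists v:\text{tx}(B(v,r_v))\ge 2)=o(1)$.

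The subtlest step is the independence claim for the collision edges detected at different steps of the modified exploration: one must carefully track which pairs of vertices have been examined at each prior step to justify that the collisions at step $s$ are fresh Bernoullis independent of everything revealed so far (the point being that no earlier step of the form $\{w_{s'},x\}$ with $s'<s$ can coincide with $\{w_s,u\}$ for $u$ active at time $s$). Once this bookkeeping is in place, the remainder is a routine expectation computation using the ball-size and degree bounds supplied by $\mathcal{E}$.
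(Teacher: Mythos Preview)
Your decomposition is too lossy and the arithmetic you claim for the $\mathrm{Same}$ term is wrong. You assert
\[
\PR(\mathrm{Same}\ge 1)\le\binom{M_{r_v}}{2}\cdot \frac{d}{n}=O\!\left(n^{\delta/5-1}\log^2 n\right)=o(1/n)\quad\text{for }\delta=1/10,
\]
but $n^{\delta/5-1}\log^2 n = n^{-1}\cdot n^{\delta/5}\log^2 n$, and $n^{\delta/5}\log^2 n\to\infty$ for every $\delta>0$; in particular for $\delta=1/10$ this is $n^{-0.98}\log^2 n$, not $o(1/n)$. Consequently the union bound over the $n$ vertices diverges for this term. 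The problem is structural rather than numerical: the inclusion $\{\text{tx}\ge 2\}\subseteq\{\mathrm{Rev}\ge 2\}\cup\{\mathrm{Same}\ge 1\}$ replaces the event ``at least two surplus edges'' by an event that only forces \emph{one} surplus edge (a single same-level edge), and a single surplus edge among $\Theta(n^{\delta/5})$ candidate pairs genuinely has probability $\Theta(n^{\delta/5-1})$.

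The paper's proof sidesteps this by not splitting at all: it first shows via a Chernoff bound on the last layer that $|B(v,r_v)|\le 2dn^{\delta/10}$ for every $v$ with high probability (cleaner than your degree-based bound, and with no $\log n$ factor), and then dominates the \emph{total} tree excess by a single $\mathrm{Bin}\bigl(|B(v,r_v)|^2,d/n\bigr)$, obtaining $\PR(\text{tx}\ge 2)\le \bigl(|B(v,r_v)|^2\, d/n\bigr)^2=O(n^{2\delta/5-2})=o(1/n)$. Your argument is salvaged the same way: the $\mathrm{Rev}$-Bernoullis and the $\mathrm{Same}$-Bernoullis are on disjoint vertex pairs and jointly independent given the exploration, so $\mathrm{Rev}+\mathrm{Same}$ is dominated by a single binomial with at most $\sum_s A_s+\binom{M_{r_v}}{2}=O(n^{\delta/5}\log^2 n)$ trials, and then $\PR(\mathrm{Rev}+\mathrm{Same}\ge 2)=O(n^{2\delta/5-2}\log^4 n)=o(1/n)$. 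Equivalently, refine your case split to $\{\mathrm{Rev}\ge 2\}\cup\{\mathrm{Same}\ge 2\}\cup\{\mathrm{Rev}\ge 1,\ \mathrm{Same}\ge 1\}$ and bound each piece by a product of two $O(n^{\delta/5-1}\log^2 n)$ factors.
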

\begin{lem}\label{expansion}
With high probability, for all $v\in V$ such that $r_v<\infty$, $|\partial B(v,r_v)|\ge c_0\cdot n^{\delta/10}$, for some $c_0>0$.
\end{lem}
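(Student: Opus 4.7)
The plan is to run the exploration process out of $v$ and exploit the near-geometric growth of generation sizes to argue that the exploration cannot just barely reach $n^{\delta/10}$ without also having a comparably large last generation. Write $Z_l = |\partial B(v,l)|$ and $S_l = |B(v,l)|$, so the definition of $r_v$ says $S_{r_v-1} < N \le S_{r_v}$ for $N := n^{\delta/10}$.

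First I would prove a uniform one-step growth lemma by a Chernoff bound. Conditional on everything revealed through the processing of generation $l$, each $w \in V \setminus B(v,l)$ joins $\partial B(v, l+1)$ independently with probability $1 - (1 - d/n)^{Z_l}$, so $Z_{l+1}$ is a sum of Bernoullis with mean $(d + o(1))\, Z_l$ uniformly on the regime $|B(v,l)| \le 2N$ (the only one we care about, since we stop exploring at the latest when $|B(v,l)|$ first exceeds $N$). Fix $\varepsilon \in (0, d-1)$; a standard multiplicative Chernoff gives $\PR(Z_{l+1} < (d - \varepsilon) Z_l \mid Z_l = k) \le e^{-c_\varepsilon k}$, which is at most $n^{-10}$ once $k \ge C_\varepsilon \log^2 n$ for $C_\varepsilon$ sufficiently large. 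A union bound over $v \in V$ and $l \le K \log_d n$ then produces a high-probability event $\mathcal{G}$ on which, for every $v$ and every $l$ with $C_\varepsilon \log^2 n \le Z_l(v) < N$, one has $Z_{l+1}(v) \ge (d - \varepsilon) Z_l(v)$.

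On $\mathcal{G}$, I would split on whether some earlier generation has already become large. If $Z_l(v) < C_\varepsilon \log^2 n$ for every $l < r_v$, then $S_{r_v-1} \le r_v \cdot C_\varepsilon \log^2 n = O(\log^3 n)$, so $Z_{r_v} = S_{r_v} - S_{r_v-1} \ge N - O(\log^3 n) \ge N/2$ and we are done. Otherwise let $l_0 \le r_v - 1$ be the first index with $Z_{l_0}(v) \ge C_\varepsilon \log^2 n$; the one-step growth event gives $Z_l \le (d - \varepsilon)^{-(r_v - 1 - l)} Z_{r_v - 1}$ for every $l_0 \le l \le r_v - 1$, hence summing the geometric series
\[
S_{r_v - 1} \le O(\log^3 n) + \frac{d - \varepsilon}{d - \varepsilon - 1}\, Z_{r_v - 1}.
\]
One more application of $\mathcal G$ yields
\[
Z_{r_v} \ge (d - \varepsilon) Z_{r_v - 1} \ge (d - \varepsilon - 1)\bigl(S_{r_v - 1} - O(\log^3 n)\bigr).
\]
If $Z_{r_v} < c_0 N$, then $S_{r_v - 1} \ge (1 - c_0) N$ and the line above gives $Z_{r_v} \ge \tfrac{1}{2}(d - \varepsilon - 1)(1 - c_0)\, N$ for $n$ large, contradicting $Z_{r_v} < c_0 N$ as soon as $c_0 \le (d - \varepsilon - 1)/4$. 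Taking $\varepsilon = (d-1)/2$ produces a positive constant $c_0(d)$ depending only on $d$.

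The main obstacle is keeping the constants uniform in the regime $d \downarrow 1$: the Chernoff threshold $C_\varepsilon$ and the amplification factor $\tfrac{d - \varepsilon}{d - \varepsilon - 1}$ both blow up as $\varepsilon \to 0$, but remain finite for every fixed $d > 1$, which is why the resulting $c_0$ depends on $d$. A secondary subtlety is that the exploration is not literally a Galton--Watson process---the offspring distribution is $\mathrm{Bin}(n - |B(v,l)|, 1 - (1 - d/n)^{Z_l})$ rather than $\mathrm{Pois}(d \cdot Z_l)$---but since we stop before $|B(v,l)|$ reaches $2n^{\delta/10} = o(n)$ and before $d Z_l / n$ becomes non-negligible, the discrepancy between the two means is $o(1)$ and is comfortably absorbed into the $\varepsilon$ slack.
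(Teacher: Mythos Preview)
Your proof is correct and follows essentially the same strategy as the paper: both establish a one-step growth estimate $Z_{l+1}\ge c\,Z_l$ via a Chernoff bound once $Z_l$ exceeds a threshold, then sum the resulting geometric series to conclude that the final layer $\partial B(v,r_v)$ carries a constant fraction of $|B(v,r_v)|\ge n^{\delta/10}$. The only cosmetic difference is that the paper takes the kickoff threshold to be $n^{\delta/30}$ (reached via the auxiliary radius $r_v''$) rather than your $C_\varepsilon\log^2 n$, but the argument is otherwise identical; your explicit choice $c_0=(d-\varepsilon-1)/4$ only literally works when $d-\varepsilon-1<2$, though any sufficiently small $c_0>0$ does the job.
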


\begin{proof}[Proof of Lemma \ref{tx-1}]
First of all, observe that with high probability, $|B(v,r_v)|\le 2dn^{\delta/10}$ for all $v\in G$. Indeed, let $v\in G$. Since $|B(v,r_v-1)|\le n^{\delta/10}$, the $|\partial B(v,r_v)|$ is stochastically dominated by a $\text{Bin}(n^{1+\delta/10},d/n)$, whose probability of exceeding $(2d-1)n^{\delta/10}$ is at most $e^{-\Omega(n^{\delta/10})}$. Therefore, this fails for all $v$ with high probability. Consider the exploration process from $v$. Due to the way the exploration process is run, the tree excess of the neighborhood we have explored is stochastically dominated by a $\text{Bin}(|B(v,r_v)|^2,d/n)$. So,
$$\PR\left(B(v,r_v)\ \text{has tree-excess} \ge 2\right)\le\PR\left(\text{Bin}(|B(v,r_v)|^2,d/n)\ge2\right)\le(d\cdot n^{-1+\delta})^2\le n^{-3/2}.$$
Taking a union bound over $v\in G$ concludes the proof.
\end{proof}
\begin{proof}[Proof of Lemma \ref{expansion}]
For each $v\in V$ with $r_v<\infty$, let $r_v''$ be the least radius such that $|\partial B(v,r_v'')|\ge n^{\delta/30}$. Because $r_v'\le K\log_d(n)$, we can see that $r_v''\le r_v'<r_v$. Next, we claim that with high probability, for each $v\in G$ and $r_v''+1\le k\le r_v$,
$$|\partial B(v,k)|\ge|\partial B(v,k-1)|\cdot\dfrac{d+1}{2}.$$
As analyzed before, the event that for every $v$, until radius $r_v$ we have discovered at most $n^{\delta/2}$ vertices is a high-probability event. So, the number of vertices the exploration process discovers at each step stochastically dominates a $\text{Bin}(|\partial B(v,k-1)|\cdot(n-n^{\delta/2}),d/n)$. Therefore,
\begin{align*}&\PR\left(|\partial B(v,k)|<|\partial B(v,k-1)|\cdot\dfrac{d+1}{2}\right)\\\le\ &\PR\left(\text{Bin}\left(|\partial B(v,k-1)|\cdot(n-n^{\delta/2}),\dfrac{d}{n}\right)<|\partial B(v,k-1)|\cdot\dfrac{d+1}{2}\right)\le e^{-\Omega(n^{\delta/30})}.
\end{align*}
This proves the claim. In order to finish the proof of this Lemma, we observe that with high probability,
\begin{align*}
n^{\delta/10}&\le|B(v,r_v)|\le\SL{k=r_v''}^{r_v}|\partial B(v,k)|+|B(v,r_v''-1)|\\&\le|\partial B(v,r_v)|\cdot\SL{k=0}^\infty\left(\dfrac{2}{d+1}\right)^k+K\log_d(n)\cdot n^{\delta/30}.
\end{align*}
This concludes the proof of the Lemma.
\end{proof}
We claim that even when conditioning on the existence of a specific not-so-long Self-Avoiding Path, $A_n$ remains a high-probability event. This will be used to give a conditional moment bound in Subsection \ref{4.4}.
\begin{lem}\label{cond-properties}
Let $\ell\le\sqrt{n}/e^{\sqrt{\log(n)}}$ and consider distinct vertices $v_1,...,v_\ell$. Let $p$ be the path $v_1,v_2,\dots,v_\ell$ and $E_p$ be the event that $p$ is present in the graph $G$.
The no-tangle event $A_n$ satisfies
$$\PR\left(A_n|\ E_p\right)=1-o(1),$$
as $n\to\infty$.
\end{lem}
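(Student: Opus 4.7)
The plan is to realize $G$ conditioned on $E_p$ as $G = p\cup G'$, where $G'$ samples each non-path edge independently with probability $d/n$ (so $G'$ is $G(n,d/n)$ with the $\ell-1$ specific path edges deterministically absent). Since removing $\ell-1 = o(n)$ edges does not disturb the high-probability properties of $G(n,d/n)$, a standard coupling argument together with Lemmas \ref{tx-1}, \ref{expansion} and Lemma 2.5 of \cite{Mossel-Sly} yields that each sub-event of $A_n$ holds for $G'$ with probability $1-o(1)$. It then remains to argue that adjoining the deterministic path $p$ to $G'$ does not destroy any of the four properties. The scaling $\ell \le \sqrt{n}/e^{\sqrt{\log n}}$ enters exactly here: for any fixed vertex $v$, the expected number of $G'$-edges between $B_{G'}(v,\delta\log_d n)$ and $V(p)$ is of order $|B_{G'}(v,\delta\log_d n)|\cdot \ell\cdot d/n = O(\log n\cdot n^{\delta-1/2}/e^{\sqrt{\log n}})$, and the chance that two such edges co-exist is $o(n^{-1-\varepsilon})$, so a union bound over $v\in V$ shows that with high probability the $G'$-ball around each $v$ has at most one ``entry'' into the path.

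Granted this structural fact, $A_n^{(1)}$ follows from a canonical decomposition: writing $G^* := p\cup G'$, if $v_{i_0}$ is the nearest path vertex to $v$ at $G'$-distance $t_0\le r\le\delta\log_d n$, then
\[
B_{G^*}(v,r) \subseteq B_{G'}(v,r)\cup \bigcup_{|k|\le r-t_0} B_{G'}\bigl(v_{i_0+k},\, r-t_0-|k|\bigr).
\]
Summing the $A_n^{(1)}$-bound for $G'$ across the right-hand side and using geometric decay in $d^{-|k|}$ yields $|B_{G^*}(v,r)|\le C'\log n\cdot d^r$ for a modified constant.

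The main obstacle will be $A_n^{(2)}$. The edge-disjoint decomposition $E(G^*[B]) = E(G'[B])\sqcup E(p[B])$ with $B = B_{G^*}(v,\delta\log_d n)$ gives
\[
\text{tx}(G^*[B]) = \text{tx}(G'[B]) + |E(p[B])| - c(G'[B]) + 1,
\]
where $c(G'[B])$ is the number of connected components of the induced $G'$-subgraph on $B$. The path edges inside $B$ split into those joining distinct $G'$-components (tree-linking, and harmless as they are absorbed by the surplus $c(G'[B])-1$) and those closing a cycle within one $G'$-component (redundant, each contributing $+1$ to tree excess). The residual count of redundant path edges -- consecutive pairs $\{v_i,v_{i+1}\}$ both lying in a common $G'$-component of $B$ -- is controlled by an expectation of order $\log^{O(1)}(n)\cdot \ell/n^{1-O(\delta)}$, which is $o(1/n)$ thanks to the $e^{-\sqrt{\log n}}$ slack in $\ell$. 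A union bound over $v$ then yields $\text{tx}(G^*[B])\le 1$ for all $v$ with high probability.

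Properties $A_n^{(3)}$ and $A_n^{(4)}$ follow by analogous arguments at the relevant radius $r_v$. Adding edges can only shrink $r_v$ and enlarge $|\partial B(v,r_v)|$, so $A_n^{(4)}$ is essentially monotone and transfers directly from $G'$ to $G^*$; $A_n^{(3)}$ reduces to an $A_n^{(2)}$-style computation carried out at radius $r_v$ rather than $\delta\log_d n$. Throughout, the critical technical point is absorbing a union bound over the $n$ vertices, which is exactly what the hypothesis $\ell \le \sqrt{n}/e^{\sqrt{\log n}}$ permits.
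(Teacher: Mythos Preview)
Your overall strategy---realize the conditional law as $G^*=p\cup G'$ with $G'$ close in total variation to $G(n,d/n)$, then control the effect of adjoining $p$---is the right idea and matches the paper's approach in spirit. However, the quantitative core of your argument breaks down.

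\textbf{The ``single entry'' bound is numerically wrong.} You claim that for each $v$, the probability that $B_{G'}(v,\delta\log_d n)$ has two or more $G'$-edges into $V(p)$ is $o(n^{-1-\varepsilon})$. But with $|B_{G'}(v,\delta\log_d n)|\lesssim n^{\delta}\log n$ and $\ell$ as large as $n^{1/2}/e^{\sqrt{\log n}}$, the expected number of such edges is of order $n^{\delta-1/2}$, so the probability of two is of order $n^{2\delta-1}$. With $\delta=1/10$ this is $n^{-0.8}$, which does \emph{not} beat $n^{-1}$; the union bound over $v\in V$ fails. The same arithmetic undermines your $A_n^{(2)}$ bound: the expectation you quote, $\log^{O(1)}(n)\cdot\ell/n^{1-O(\delta)}\approx n^{-1/2+O(\delta)}$, is again not $o(1/n)$. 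Without the single-entry property your decomposition of $B_{G^*}(v,r)$ is not valid either, since from each $v_{i_0+k}$ the $G'$-ball may re-enter the path and the recursion does not terminate.

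\textbf{The monotonicity claim for $A_n^{(4)}$ is unjustified.} Adding edges can only decrease $r_v$, but $|\partial B_{G^*}(v,r_v^{G^*})|$ is taken at a \emph{different} radius than $|\partial B_{G'}(v,r_v^{G'})|$, and there is no monotonicity relating these two quantities.

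The paper circumvents the failed union bound by a two-stage analysis. First it establishes \emph{global} structural properties of the $\delta\log_d n$-neighbourhood of $p$ that hold with probability $1-o(1)$ (not $1-o(n^{-1})$): no short cycles near $p$, at most one ``natural'' discovery of nearby path vertices from any exploration, and size control on the off-path balls $\tilde B_p(v_i,r)$ around each $v_i$ (Claim~\ref{claim-cond}). These are affordable because the union is only over the $\ell\le\sqrt n$ path vertices. Only then, working on this global good event, does it run the per-vertex argument with an honest $o(n^{-1})$ tail. Your proof would need an analogous preprocessing step to go through.
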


Before we prove Lemma \ref{cond-properties}, we prove some claims that will be helpful.
\begin{claim}\label{claim-cond}
The following hold:
\begin{enumerate}
\item 
Conditional on $E_p$, with high probability, the $2\delta\log_d(n)$-neighborhood of $p$ does not contain cycles of length $\le2\delta\log_d(n)$.
\item
$\PR(|B(v,r_v)|<n^{\delta/2}\ \text{for all}\ v\in V|\ E_p)=1-o(1).$
\item 
Let $v\in V$. Conditional on $E_p$, we say that a vertex $v_i\in p$ was discovered naturally while exploring the neighborhood of $v$, if $\text{d}(v,v_i)\le\min(\text{d}(v,v_{i-1}),\text{d}(v,v_{i+1}))$. With high probability the following event holds: For every $v\in V$ and any pair $(v_i,v_j)$ such that $|i-j|\le\log(n)^2$, while exploring $B(v,r_v)$, at most one of the $v_i,v_j$ was discovered naturally.
\end{enumerate}
\end{claim}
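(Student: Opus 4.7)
My plan is to exploit the conditional decomposition: conditional on $E_p$, the graph $G$ is the union of the path $p$ (whose edges are forced present) and an independent $G(n,d/n)$ on the complementary edge set. All three parts then follow from first-moment arguments that use this independence, with the main substantive work in part (3).

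For (1), I would classify short cycles of length $k \le L := 2\delta\log_d(n)$ contained in the $L$-neighborhood of $p$ via a witness structure: a cycle $C$ together with a self-avoiding walk of length $\le L$ joining some vertex of $C$ to some $v_i \in p$. Summing over ordered choices of cycle vertices (with a factor $1/(2k)$), intermediate walk vertices, and the $\ell$ choices of endpoint $v_i$, and multiplying by the probability $(d/n)^{|C|+|\text{walk}|}$ that all the non-path edges appear, the expected count is $O(\ell\cdot d^{2L}/n) = O(\ell\cdot n^{4\delta - 1})$, which is $o(1)$ for $\delta = 10^{-1}$ and $\ell \le \sqrt{n}/e^{\sqrt{\log n}}$. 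Cycles that pass through one or more $p$-vertices are handled by analogous bookkeeping in which the $p$-portion of the cycle is free and one pays only for the non-path return path, contributing $O(\ell L\cdot n^{2\delta - 1}) = o(1)$. Part (2) is essentially immediate: the maximum degree of $G(n,d/n)$ is $O(\log n)$ with high probability (conditioning on $E_p$ changes each vertex's degree by at most two), and since $|B(v,r_v-1)| < n^{\delta/10}$ by the definition of $r_v$, each vertex of $\partial B(v, r_v - 1)$ contributes at most $\Delta_{\max}$ new neighbors, yielding $|B(v,r_v)| \le n^{\delta/10}(1+\Delta_{\max}) = O(n^{\delta/10}\log n) \ll n^{\delta/2}$.

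For (3), if $v_i, v_j \in p$ with $|i-j| \le (\log n)^2$ are both discovered naturally in the exploration of $B(v, r_v)$, a geodesic $\pi_i$ from $v$ to $v_i$ arrives at $v_i$ via an edge different from $(v_i, v_{i\pm 1})$, and analogously for $\pi_j$; pasting $\pi_i$, the $p$-segment $v_iv_{i+1}\cdots v_j$, and the reverse of $\pi_j$ yields a cycle of length $\le 2r_v + (\log n)^2$ consisting of the $p$-segment and two essentially external paths. I would first establish that $r_v \le r^{\max} := (\delta/10 + o(1))\log_d(n)$ uniformly in $v$ with high probability (using the ball-growth bound $|B(v,r)| \le c\log_d(n)\cdot d^r$ together with the expansion in $A_n^{(4)}$, both of which survive the conditioning on $E_p$), and then bound the expected number of bad witness configurations $(v, v_i, v_j, \pi_i, \pi_j)$ with $\pi_i, \pi_j$ of respective lengths $k_1, k_2 \le r^{\max}$ by
\[
n \cdot \ell\,(\log n)^2 \sum_{k_1,k_2 \le r^{\max}} n^{k_1+k_2-2}(d/n)^{k_1+k_2} \;=\; O\!\left((\log n)^2 \cdot n^{\delta/5 - 1/2 + o(1)}\right),
\]
which is $o(1)$ for $\delta = 10^{-1}$ and $\ell \le \sqrt n$.

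The main obstacle is the $r_v$ bound in part (3): unlike in the unconditional setting, $B(v, r_v)$ can grow in two qualitatively different ways once $E_p$ is imposed, via exponential Erdős–Rényi expansion at rate $d$ off-path, and via linear expansion along $p$ itself. One has to separate these contributions so that the effective off-path exponent entering $r^{\max}$ is at most $\delta/10 + o(1)$, which is what keeps the first-moment estimate $o(1)$. A secondary subtlety is that $\pi_i, \pi_j$ may traverse $p$-edges lying outside the segment $[v_i, v_j]$; this can be handled at a polylogarithmic cost by case-analysis on the intersection pattern of the geodesics with $p$, replacing the original witness by one whose non-path portion is fully independent of $E_p$.
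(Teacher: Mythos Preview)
Your treatment of parts (1) and (2) is sound and close in spirit to the paper's argument; the paper organizes part (1) via exploration processes from each $v_i$ rather than via a cycle-plus-connecting-path witness, but the resulting first-moment bounds are the same order.

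The genuine gap is in part (3). The uniform bound $r_v \le r^{\max} = (\delta/10 + o(1))\log_d(n)$ that your entire first-moment computation rests on is simply false. The ball-growth inequality $|B(v,r)| \le c\log_d(n)\cdot d^r$ gives only a \emph{lower} bound on $r_v$ (if $|B(v,r_v)| \ge n^{\delta/10}$ then $d^{r_v} \gtrsim n^{\delta/10}/\log n$), not an upper bound, and the expansion property in $A_n^{(4)}$ only controls the number of steps \emph{after} the boundary first reaches size $n^{\delta/30}$; it says nothing about how long that takes. In $G(n,d/n)$ there are, with high probability, vertices sitting at the tips of long tendrils of the giant component for which $r_v$ is of order $K\log_d(n)$ with $K$ a large constant depending on $d$ (this is exactly why the paper only ever asserts $r_v < K\log_d(n)$). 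Plugging $r^{\max} = K\log_d(n)$ into your sum gives $n\cdot\ell\cdot(\log n)^2\cdot n^{-2}\cdot d^{2K\log_d(n)} \asymp \ell\,(\log n)^2\, n^{2K-1}$, which diverges.

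The paper sidesteps the radius entirely and uses the \emph{size} bound from part (2). Conditional on $E_p$, the off-path edges form an independent copy of $G(n,d/n)$ on the complementary edge set, in which the labels of the non-path vertices are exchangeable with the $v_i$'s as far as natural discovery is concerned (the path edges can only cause $v_i$ to be discovered \emph{unnaturally}). Hence on the event $\{|B(v,r_v)| \le n^{\delta/2}\}$, the probability that a fixed $v_i$ is discovered naturally is at most $n^{\delta/2-1}$, and for two fixed vertices it is at most $(n^{\delta/2-1})^2 = n^{\delta-2}$. A union bound over the $n$ choices of $v$ and the $\ell\cdot(\log n)^2 \le \sqrt{n}\cdot(\log n)^2$ pairs $(v_i,v_j)$ then gives $o(1)$. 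The moral is that you should trade your path-counting (which is sensitive to $r_v$) for this exchangeability argument (which only needs $|B(v,r_v)|$).
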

\begin{proof}
\begin{enumerate}	
\item
From each $v_i$, we run the exploration process avoiding $p$ and all previously discovered vertices, until we have revealed the $2\delta\log_d(n)$ neighborhood of $p$.
Also, due to the proof of Lemma \ref{GW-study} and the inequality $x^2<2e^x$ for $x>0$,
\begin{align*}
\E(|\tilde{B}_p(v_i,2\delta\log_d(n))|^2)\le c\cdot d^{4\delta\log_d(n)}=c\cdot n^{4\delta},
\end{align*}
for every $v_i$. Observe that the only ways to create cycles of length $\le2\delta\log_d(n)$ are: 
\begin{itemize}
\item
A pair of vertices discovered at the same exploration process to connect via an unexplored edge.
\item 
A vertex in $\tilde{B}_p(v_i,2\delta\log_d(n))$ connects to some $v_j$, for which $|i-j|\le2\delta\log(n)$.
\end{itemize}
We now explain why both of these events do not happen with high probability. Let $N_i^{(1)}$ be the number of edges closing cycles in $\tilde{B}_p(v_i,2\delta\log_d(n))$. Our goal is to prove that $\SL{i=1}^\ell N_i^{(1)}=0$ with high probability. Of course, $N_i^{(1)}$ is stochastically dominated by a Bin$\left(|\tilde{B}_p(v_i,2\delta\log_d(n))|^2,d/n\right)$, so
$$\E\left(\SL{i=1}^\ell N_i^{(1)}\right)\le\SL{i=1}^\ell\dfrac{d}{n}\cdot\E\left(|\tilde{B}_p(v_i,2\delta\log(n))|^2\right)\le c\cdot n^{1/2-4\delta}=o(1),$$
if $\delta<1/8$. Also, let $N_i^{(2)}$ be the number of edges between a vertex in $\tilde{B}_p(v_i,2\delta\log_d(n))$ and a vertex $v_j$, for which $|i-j|\le2\delta\log_d(n)$. Similarly as before, $N_i^{(2)}$ is stochastically dominated by a Bin$\left(|\tilde{B}_p(v_i,2\delta\log_d(n))|\cdot4\delta\log_d(n),d/n\right)$, so
$$\E\left(\SL{i=1}^\ell N_i^{(2)}\right)\le\dfrac{2d\delta\log_d(n)}{n}\SL{i=1}^\ell\E\left(|\tilde{B}_p(v_i,2\delta\log_d(n))|\right)\le c\cdot\log_d(n)\cdot n^{-1/2+2\delta}=o(1),$$
if $\delta<1/4$.
\item 
Due to the definition of $r_v$, $|B(v,r_v-1)|<n^{\delta/10}$. Conditional on $E_p$, we prove that if $|B(v,r)|<n^{\delta/10}$, then $|B(v,r+1)|<n^{\delta/2}$ with probability $\ge1-e^{-\Omega(n^{\delta/2})}$. Indeed, $\partial B(v,r)$ has two types of vertices: The ones that are imposed by the existence of $p$ and the ones that are discovered using the exploration process. The number of vertices of the first type is in total at most $2\cdot n^{\delta/10}$, while the number of vertices of the second type is stochastically dominated by a Bin$(n^{\delta/10},d/n)$. Therefore,
\begin{align*}
&\PR\left(|B(v,r+1)|>n^{\delta/2}|\ p\in G,|B(v,r)|\le n^{\delta/10}\right)\\\le\ &\PR\left(\text{Bin}(n^{\delta/10},d/n\right)\ge n^{\delta/2})=e^{-\Omega(n^{\delta/2})}.
\end{align*}
Taking a union bound over $v\in V$ and the possible values of $r_v$ concludes the proof.
\item 
Observe that conditional on $E_p$, a vertex from the $v_i$ is more unlikely to be discovered naturally, and also that for conditional on $\{|B(v,r_v)|\le n^{\delta/2}\}$, for every two vertices, the event of them being discovered naturally is at most at likely as for 2 vertices to both be chosen in a list of $n^{\delta/2}$ vertices. So, for any $i,j$, during the exploration process of $B(v,r_v)$,
$$\PR(v_i\ \text{and}\ v_j\ \text{are discovered naturally}\ |\ E_p)\le(n^{-1+\delta/2})^2=n^{-2+\delta}.$$
Since there are $n$ possible choices for the initial vertex $v$ and at most $\ell\cdot\log(n)^2\le\sqrt{n}$ pairs of $(v_i,v_j)$ such that $|i-j|\le\log(n)^2$, taking a union bound proves the result.
\end{enumerate}
\end{proof}
We are ready to prove Lemma \ref{cond-properties}.
\begin{proof}[Proof of Lemma \ref{cond-properties}]
First, observe that 
$$\PR\left(|\partial B(v,r_v)|\ge c_0\cdot n^{\delta/10}\ \forall v\in V:\tau_v<\infty\ |\ E_p\right)=1-o(1).$$
The reason is contained in the proof of Lemma \ref{expansion} we provide above. Indeed, the exploration process at each step still stochastically dominates a Bin$(|\partial B(v,k-1)|\cdot(n-n^{\delta/2}),d/n)$ because when we condition on $E_p$, the number of children at generation $k$ can only increase. Therefore, all the steps of this proof work in the exact same way.\\\\
Next, we prove that conditional on $E_p$, with high probability, $|B_r(v)|\le c'\cdot d^r\cdot\log(n)$, for every $v\in V, r\le\delta\log_d(n)$, for some constant $c'$. Fix a vertex $v$ and run the exploration process from $v$, avoiding $p$ until $B_p\left(v,\delta\log_d(n)\right)$ has been explored. Due to Lemma \ref{GW-study},
\begin{align*}
\PR\left(\max\limits_{r\le\delta\log_d(n)}|B_p(v,r)|\cdot d^{-r}>c\cdot(1-d^{-1})^{-1}\log(n)\right)\le\PR(Y_v'>c\log(n))\le c_2\cdot e^{-c\log(n)/2}.
\end{align*}
Taking $c$ to be large enough makes this probability $o(n^{-1})$. Next, run the exploration process from each $v_i$ away from $p$ and $B_p(v,\delta\log_d(n))$, until reaching radius $\delta\log_d(n)$ from every $v_i$. Similarly as above, each neighborhood discovered has, with probability $1-o(1)$, the property that $|\tilde{B}_p(v_i,r)|\le c(1-d^{-1})^{-1}\cdot d^r\cdot\log(n)$. Moreover, due to the way the exploration process has ran, connections between neighborhoods can happen only if some $v_i$ connects to some vertex of the neighborhood of $v_j$, with $j<i$. However, for each $i$,
\begin{align*}
&\PR\left[\tilde{B}_p(v_i,\delta\log_d(n))\ \text{connects to}\ \ge2\ \text{other}\ \tilde{B}_p(v_j,\delta\log_d(n))\right]\\\le\ &\PR\left(\text{Bin}(n^{2\delta}\cdot\log(n)^4\cdot\ell,d/n)\ge2\right)\le n^{-3/2+4\delta}.
\end{align*}
Therefore, conditional on $E_p$, with high probability, each ball $B_{v,\delta\log_d(n)}(v_i,r)$ has at most $2c(1-d^{-1})^{-2}\cdot d^r\cdot\log(n)$. Finally, because of the way the exploration process was ran, the only way to discover vertices in $B(v,r)$ for $r\le\delta\log_d(n)$ that are not in $B_p(v,\delta\log_d(n))$ is if $B_p(v,\delta\log_d(n)-1)$ connects to some $v_i$. However, arguing similarly as before, with probability $1-o(n^{-1})$, $B_p(v,\delta\log_d(n)-1)$ connects to at most 1 of the $v_i$'s and on this event, there exists $v_i$ for which
$$B(v,r)\subseteq B_p(v,r)\cup B_{v,\delta\log_d(n)}(v_i,r).$$
This means that $|B(v,r)|\le c(1-d^{-1})^{-1}(1+2(1-d^{-1})^{-1})\cdot d^r\cdot\log(n)$ happens with probability $1-o(n^{-1})$. Taking a union bound proves that with high probability, conditional on $p$ being present, $|B(v,r)|\le c'\cdot d^r\cdot\log(n)$ for each $v\in V$ and $r\le\delta\log_d(n)$.\\\\
Next, we aim to prove that conditional on $E_p$, with high probability, all balls of radius $\delta\log_d(n)$ have tree excess at most 1. Indeed, due to statement 1 of Claim \ref{claim-cond}, with high probability, the $2\delta\log_d(n)$ neighborhood of $p$ has no cycles of length $\le2\delta\log_d(n)$. From now on, we work on this event. Run the exploration process from a vertex $v$, without avoiding the $2\delta\log_d(n)$ neighborhood of $p$. Because there are no cycles of length $\le2\delta\log_d(n)$ in the revealed part, the only way to close a cycle in $B(v,\delta\log(n))$ is with an unexplored edge. Therefore, the number of cycles that actually close is stochastically dominated by a Bin$(c'^4\cdot n^{2\delta}\cdot\log(n)^2,d/n)$, so
$$\PR(\text{tx}\left(B(v,\delta\log_d(n)))\ge2\ |\ E_p\right)\le c'^2d^2\cdot\log(n)^4\cdot n^{-2+4\delta}=o(n^{-1}).$$
Finally, we prove that conditional on $E_p$, with high probability, all $B(v,r_v)$ have tree excess at most 1. Due to statement 2 of Claim \ref{claim-cond}, it suffices to work on the event that for every $v$, $|B(v,r_v)|<n^{\delta/2}$. By statement 3 of Claim \ref{claim-cond}, similarly as before, the only cycles that can close inside $B(v,r_v)$ are the ones with unexplored edges. Indeed, since there is no pair of vertices $(v_i,v_j)$ such that $|i-j|\le\log(n)^2$ and both were discovered naturally, it is impossible for a cycle to close due to $p$ being present. So, the tree excess is stochastically dominated by a $\text{Bin}(n^\delta,d/n)$, therefore
$$\PR\left(\text{tx}(B(v,r_v)\ge2\ |\ E_p\right)\le (d\cdot n^{-1+\delta})^2=o(n^{-1}).$$
The result follows.
\end{proof}
\subsection{Counting Numbers of Walks}\label{4.4}
In this subsection, we perform the moment calculations necessary to prove three lemmas regarding numbers of walks in $G(n,d/n)$, and conclude the proof of Proposition \ref{ER-whp-prop}.
\begin{lem}\label{average-vertex}
For some $c_1>0$,
$$\dfrac{1}{n}\SL{x}N_x^{(L)}\ge c_1\cdot d^L,$$
with high probability.
\end{lem}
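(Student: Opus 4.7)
The plan is to use the identity
$$\SL{x}N_x^{(L)}\ =\ \langle\chi,W^{L-1}\chi\rangle,$$
where $\chi$ is the all-ones vector on directed edges, together with the spectral decomposition $W^{L-1}=\lambda_1^{L-1}uv^*+R^{L-1}$ already proved in Lemma~\ref{mixing-NB}. Plugging this in gives
$$\SL{x}N_x^{(L)}\ =\ \lambda_1^{L-1}\|u\|_1\|v\|_1\ +\ \langle\chi,R^{L-1}\chi\rangle.$$
The error is bounded by $\|\chi\|_2^2\cdot\|R^{L-1}\|_{\text{OP}}$; since $\|\chi\|_2^2=2|E(G)|=O(nd)$ with high probability and $\|R^{L-1}\|_{\text{OP}}=O(\log(n)^c d^{L/2})$ (by iterating the estimate proved in Lemma~\ref{mixing-NB} for $W^{K\log_d n}$), the error is $O(n\log(n)^c\cdot d^{L/2+1})=o(nd^L)$ for $L=K\log_d n$ with $K$ a positive constant. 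Since $\lambda_1=(1+o(1))d$ (following from $\theta=\Theta(d^\ell)$), the lemma thus reduces to showing $\|u\|_1\|v\|_1=\Omega(n)$.

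To establish this, I would use the further approximations from the same proof: $u\approx\zeta=W^\ell\check\varphi/\theta$ with $\|u-\zeta\|_2=o(1)$, and $v/\|v\|_2\approx\check\varphi=\iota\varphi$ with $\|v\|_2=\theta/\lambda_1^\ell+o(1)=\Theta(1)$, where $\varphi=W^\ell\chi/\|W^\ell\chi\|_2$ and $\theta=\Theta(d^\ell)$. Combined with the Cauchy--Schwarz bound $\|u-\zeta\|_1\leq\sqrt{2|E|}\cdot\|u-\zeta\|_2=o(\sqrt n)$, these give $\|u\|_1=\|\zeta\|_1+o(\sqrt n)$ and $\|v\|_1=\Theta(\|\varphi\|_1)$, so it is enough to prove $\|\varphi\|_1=\Omega(\sqrt n)$ and $\|\zeta\|_1=\Omega(\sqrt n)$. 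The crucial observation is that $(W^\ell\chi)(e)$ equals the number of NB walks of length $\ell+1$ originating at directed edge $e$: by the no-tangle event $A_n$, the $\ell$-neighborhood of every vertex in $G(n,d/n)$ is tree-like for $\ell=\varepsilon\log_d n$, so this count is exactly the generation-$\ell$ population of a Poisson$(d)$ Galton--Watson tree rooted at $e_2$. Since the process is supercritical ($d>1$) and $\E[X_\ell]=d^\ell$, standard branching-process bounds yield $\PR(X_\ell\ge cd^\ell)\ge c'$ for constants $c,c'>0$ uniform in $\ell$. Combining with an analogous upper-bound moment estimate $\|W^\ell\chi\|_2^2=O(nd^{2\ell+1})$, this delivers $\|\varphi\|_1=\|W^\ell\chi\|_1/\|W^\ell\chi\|_2=\Omega(\sqrt n)$ and, by the same reasoning, $\|\zeta\|_1=\Omega(\sqrt n)$, yielding $\|u\|_1\|v\|_1=\Omega(n)$.

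The main obstacle is the concentration step: each directed edge is individually likely to be ``good'' (in the sense that $(W^\ell\chi)(e)\ge cd^\ell$), but we need a positive fraction to be good with high probability rather than merely in expectation. This is handled by a second-moment argument on the number of good edges, exploiting the near-independence of the $\ell$-neighborhoods of two well-separated directed edges in $G(n,d/n)$ (a fact already implicit in the tree-like structure guaranteed by $A_n$). Once this concentration is established, the lower bounds on $\|\varphi\|_1$ and $\|\zeta\|_1$ follow, and the lemma is complete.
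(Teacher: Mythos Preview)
Your approach is genuinely different from the paper's. The paper proves the lemma by a direct second-moment calculation on $Y_n=\frac{1}{n}\sum_x N_x^{(L)}$: it lower-bounds $\E[Y_n\mathbf{1}_{A_n}]$ by restricting to Self-Avoiding walks, and upper-bounds $\E[Y_n^2\mathbf{1}_{A_n}]$ by a combinatorial path-counting argument over pairs $(w_1,w_2)$ of length-$L$ NB walks, tracking ``mode shifts'' between exploring new edges and revisiting the past. Chebyshev then gives concentration. No spectral input is used.

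Your route through the spectral decomposition of $W$ is workable, but there is a gap in the step where you bound $\|\varphi\|_1$ and $\|\zeta\|_1$ separately. You claim $\|W^\ell\chi\|_2^2=O(nd^{2\ell+1})$, but a first-moment estimate only gives this in expectation, and Markov does not yield the bound with high probability; on the event $A_n$ one only has the deterministic bound $\|W^\ell\chi\|_2^2\le 2|E|\cdot(c\log n\cdot d^{\ell})^2$, which introduces a $\log n$ loss in $\|\varphi\|_1$ and hence in the final constant $c_1$. The clean fix is to compute the product $\langle\chi,u\rangle\langle\chi,v\rangle$ directly rather than factor by factor: using $\langle\chi,\zeta\rangle=\|W^\ell\chi\|_2/\theta$ and $\langle\chi,\check\varphi\rangle=\|W^\ell\chi\|_1/\|W^\ell\chi\|_2$ together with $\|v\|_2=\theta/\lambda_1^\ell+o(1)$, the $\|W^\ell\chi\|_2$ cancels and one gets $\|u\|_1\|v\|_1\approx\|W^\ell\chi\|_1/\lambda_1^\ell$. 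So the whole argument reduces to showing $\|W^\ell\chi\|_1=\sum_x N_x^{(\ell+1)}\ge c\,nd^{\ell+1}$ with high probability for the shorter scale $\ell\sim\varepsilon\log_d n$. That is exactly the statement of the lemma at a smaller length, and your second-moment argument on ``good edges'' (or directly on $\sum_x N_x^{(\ell+1)}$) does handle it---arguably more easily than at length $L$, since for $\ell=\varepsilon\log_d n$ two walks with distinct starting vertices intersect with probability $O(n^{2\varepsilon-1})$ and one can crude-bound rather than track mode shifts. In short: your spectral layer is extra machinery that reduces the problem to the same second-moment path-counting the paper performs directly, though at a shorter and somewhat simpler scale; just be careful not to split $\|u\|_1$ and $\|v\|_1$ along the way.
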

\begin{lem}\label{mid-range}
There exists some $c>0$ for which
$$N_v^{(\ell)}\le c\cdot d^{\ell-r_v}\cdot|\partial B(v,r_v)|+2\cdot|B(v,r_v)|\le c^2\cdot (d^\ell+n^{\delta/10})\le c^3\cdot d^\ell,$$
for all $v\in B$ and $r_v\le\ell\le K\log_d(n)$, with high probability.
\end{lem}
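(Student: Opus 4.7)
The plan is to split each NB walk of length $\ell$ starting at $v$ according to whether it stays inside $B(v,r_v)$ or eventually exits, writing $N_v^{(\ell)} = N_v^{\mathrm{in}} + N_v^{\mathrm{out}}$. Under $A_n^{(3)}$, the ball $B(v,r_v)$ has tree excess at most $1$, i.e., is at worst unicyclic, and in such a graph there are at most two NB walks between any fixed pair of vertices of any fixed length, corresponding to the two orientations around the unique cycle. Summing this over the possible endpoints yields $N_v^{\mathrm{in}} \le 2|B(v,r_v)|$, which is the second term of the claimed bound.

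For $N_v^{\mathrm{out}}$, I decompose each exiting walk at its first exit step $t$. A short argument gives $w_{t-1}\in\partial B(v,r_v)$ and forces the prefix length $s := t-1$ to satisfy $s\ge r_v$. The prefix count for each fixed endpoint $w\in\partial B(v,r_v)$ and fixed $s$ is at most $2$ by the tree-excess-$1$ bound invoked above. For the suffix I aim to prove the uniform estimate $N_w^{(k)}\le C d^k$ for every $w\in\partial B(v,r_v)$ and every $k\le K\log_d(n)$; granted this, summing $2\cdot C d^{\ell-s}$ geometrically over $s = r_v,\ldots,\ell-1$ produces a series bounded by $c\,d^{\ell-r_v}$, and summing over the $|\partial B(v,r_v)|$ choices of $w$ gives the first term $c\cdot d^{\ell-r_v}\cdot|\partial B(v,r_v)|$.

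To deduce the second stated inequality $c^2(d^\ell+n^{\delta/10})$, I split on $r_v$. If $r_v\le\delta\log_d(n)$, the membership $v\in B$ gives $|\partial B(v,r_v)|\le C d^{r_v}$ and $|B(v,r_v)|\le C' d^{r_v}$ via Remark~\ref{balls-slowly}, so both terms are $O(d^\ell)$. Otherwise, the high-probability universal bound $|B(v,r_v)|\le 2d\,n^{\delta/10}$ (already used in the proof of Lemma~\ref{tx-1}) supplies the $n^{\delta/10}$ contribution, while $d^{r_v}>n^\delta$ makes the first term much smaller than $d^\ell$. The passage to $c^3 d^\ell$ then follows because $d^{r_v}\ge n^{\delta/10}/c$ in both regimes: trivially when $r_v>\delta\log_d(n)$, and via $|B(v,r_v)|\ge n^{\delta/10}$ combined with Remark~\ref{balls-slowly} when $r_v\le\delta\log_d(n)$, so that $n^{\delta/10}\le c\,d^{r_v}\le c\,d^\ell$.

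The main obstacle is the uniform suffix estimate $N_w^{(k)}\le C d^k$ for arbitrary $w\in\partial B(v,r_v)$, since $w$ itself may belong to the atypical set $A$ and a NB walk from $w$ can revisit $B(v,r_v)$. I plan to handle this through a secondary decomposition: most NB walks from $w$ of length $k$ are self-avoiding and can be bounded by the SAW counts $S_w^{(k)}$ (controlled by $A_n^{(1)}$ together with a direct first-moment union bound over paths), while the remaining NB walks must close at least one cycle, and such cycle-closing walks are few since $A_n^{(2)}$ and $A_n^{(3)}$ force tree excess at most $1$ in every moderately sized neighborhood encountered along the way.
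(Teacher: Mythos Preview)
Your decomposition into walks staying inside $B(v,r_v)$ versus exiting, the bound $N_v^{\mathrm{in}}\le 2|B(v,r_v)|$ via tree excess $\le 1$, the first-exit splitting with $w_{t-1}\in\partial B(v,r_v)$ and $s\ge r_v$, and the treatment of the second and third inequalities are all essentially what the paper does. (The paper streamlines the last two inequalities by observing once that $v\in B$ forces $d^{r_v}\ge n^{\delta/10}/C'$ via Remark~\ref{balls-slowly}, but your case split is equivalent.)

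The genuine gap is the suffix bound. Your target estimate $N_w^{(k)}\le Cd^k$ uniformly over $w\in\partial B(v,r_v)$ is simply false. The boundary $\partial B(v,r_v)$ can contain vertices $w\in A$; for such $w$ there is some $r\le\delta\log_d(n)$ with $|\partial B(w,r)|>Cd^r$, and under tree excess $\le 1$ one has $N_w^{(r)}\ge|\partial B(w,r)|>Cd^r$. Your fallback does not rescue this: a first-moment bound $\E[S_w^{(k)}]\le d^k$ yields nothing useful via Markov for a union bound over polynomially many triples $(v,w,k)$, and $A_n^{(2)},A_n^{(3)}$ control only balls of radius $\delta\log_d(n)$ or $r_v$, not the full range $k\le K\log_d(n)$, so the ``cycle-closing walks are few'' heuristic has no anchor.

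The paper never attempts a pointwise suffix bound. Instead it conditions on $B(v,r_v)$ (so the remaining edges are fresh $G(n,d/n)$ samples) and bounds the \emph{aggregate}
\[
X_v=\sum_{\ell'=1}^{\ell-r_v}\sum_{w\in\partial B(v,r_v)}\tilde N_w^{(\ell')}
\]
directly, where $\tilde N_w^{(\ell')}$ counts NB walks from $w$ that leave $B(v,r_v)$ at the first step. The crucial point is that $X_v$ is an average over $|\partial B(v,r_v)|\ge c_0 n^{\delta/10}$ starting points (this is exactly where $A_n^{(4)}$ is used), so it concentrates. Concretely, the paper computes the $s$-th moment with $s=\lceil\log n\rceil$: by enumerating respectful $s$-tuples of walks and tracking, for each walk, the alternating segments that explore new edges versus revisit the past, one shows $\E[X_v^s\mathbf{1}_{A_n}\mid B(v,r_v)]\le(1+o(1))\,\E[X_v\mid B(v,r_v)]^s$. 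Markov's inequality then gives $\PR(X_v>c\,\E X_v)\le (2(1-d^{-1})^{-1}/c)^s=o(n^{-2})$, which survives the union bound over $v$ and $\ell$. This high-moment path-counting argument is the missing idea in your proposal.
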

\begin{lem}\label{task5-average}
For the random graph $G(n,d/n)$, there exists $C>0$ such that the inequality
$$\dfrac{1}{n}\cdot\SL{\ell=L+1}^n\SL{x}\SL{y}N_{xy}^{(L)}\cdot S_y^{(\ell-L)}\cdot d^{-\ell}\cdot\dfrac{1}{\ell}\le C\cdot\log(n)$$
holds with high probability.
\end{lem}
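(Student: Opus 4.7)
The plan is to prove Lemma~\ref{task5-average} by a first moment calculation. Observe that $\SL{x,y}N_{xy}^{(L)}\cdot S_y^{(\ell-L)}$ counts walks $\pi=(v_0,v_1,\dots,v_\ell)$ in $G$ whose first $L$ edges form a non-backtracking walk and whose trailing $\ell-L+1$ vertices $v_L,v_{L+1},\dots,v_\ell$ are pairwise distinct. I would bound the expectation of this count in $G(n,d/n)$ by enumerating candidate walks in $K_n$ and multiplying by $(d/n)^{|E(\pi)|}$. The number of such walks with all $\ell$ edges distinct is at most $n\cdot(n-1)(n-2)^{L-1}\cdot(n-1)(n-2)\cdots(n-\ell+L)\le n^{\ell+1}$, each contributing $(d/n)^\ell$; walks with repeated edges necessarily contain a short cycle and are suppressed by a factor of $1/n$ per coincidence, allowing them to be summed as a geometric series. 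This yields $\E\left[\SL{x,y}N_{xy}^{(L)}\cdot S_y^{(\ell-L)}\right]\le c_0\,n\,d^\ell$ for some constant $c_0=c_0(d)$.

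Multiplying by $d^{-\ell}/\ell$, summing over $\ell=L+1,\dots,n$, and dividing by $n$ then gives $\E[\mathrm{LHS}]\le c_1\log(n)$, using $\SL{\ell\ge1}\ell^{-1}=O(\log n)$. To upgrade this expectation bound to the asserted high-probability statement with a constant $C$, I would split the range of $\ell$ at a threshold $\ell^\star=K'\log_d(n)$. For the tail $\ell\ge\ell^\star$, Lemma~\ref{small-error} gives the uniform whp bound $\SL{y}S_y^{(\ell-L)}\le d^{\ell-2L/3}$, which combined with a first-moment whp estimate on the total non-backtracking mass $\SL{x}N_x^{(L)}$ handles the tail contribution deterministically on a high-probability event. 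For the moderate range $\ell\in[L+1,\ell^\star]$ there are only $O(\log n)$ terms, and I apply Markov's inequality term by term followed by a union bound, supplementing with a second moment (or Kim--Vu type polynomial concentration) calculation to push the failure probability to $o(1)$.

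The main obstacle I anticipate is the overlap analysis in the first-moment calculation --- showing that walks with coincidences between the NB prefix and the SAW suffix (or within the NB prefix itself) do not dominate the bound. The natural strategy is to decompose the sum over $\pi$ according to its skeleton of repeated vertices: each additional coincidence forces a short cycle through a specific choice of vertices, costing a factor of $1/n$ in edge probability against a factor of $n$ in location, so the total contribution of walks with $k$ overlaps is geometric in $k$. The no-tangle event of Subsection~\ref{4.3}, together with the conditional-expectation control from Lemma~\ref{cond-properties}, supplies the needed uniformity for this argument on a high-probability event, and ensures that the crude enumeration in $K_n$ can be carried over to $G(n,d/n)$ without losing more than constant factors.
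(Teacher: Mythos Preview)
Your first-moment estimate $\E[\mathrm{LHS}]\le c_1\log n$ is plausible (though the overlap enumeration you sketch needs the mode-shift decomposition the paper uses, not just ``geometric in coincidences''), but the upgrade to a whp bound has a genuine gap. The split at $\ell^\star=K'\log_d n$ is misplaced: the harmonic tail $\sum_{\ell>\ell^\star}^n 1/\ell$ is still $\Theta(\log n)$, so the tail, not your moderate range, carries essentially all of the mass you need to concentrate.

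Your proposed tail control via Lemma~\ref{small-error} does not close. Writing $\sum_{x,y}N_{xy}^{(L)}S_y^{(\ell-L)}\le\bigl(\max_y\sum_xN_{xy}^{(L)}\bigr)\sum_yS_y^{(\ell-L)}$ and using $\sum_x N_{xy}^{(L)}=N_y^{(L)}\le d^L(\log n)^{2K/\delta}$ on $A_n$ (Observation~\ref{key-obss}) together with Lemma~\ref{small-error} gives a per-$\ell$ bound of $d^{\ell+L/3}(\log n)^{2K/\delta}$; after dividing by $n d^\ell$ this is $n^{K/3-1}(\log n)^{2K/\delta}$, which blows up polynomially since Lemma~\ref{small-error} requires $K>3$. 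Replacing Lemma~\ref{small-error} by the sharper estimate $\sum_yS_y^{(\ell-L)}\le Cnd^{\ell-L}$ (which would itself need a concentration argument) still leaves you with $C(\log n)^{1+2K/\delta}$ after summing over $\ell$---and that extra polylog factor is fatal, because it feeds into an exponent in Lemma~\ref{comp5-6}. Your second-moment patch, meanwhile, is aimed at the range $[L+1,K'\log_d n]$, which contributes only $O(1)$ and is not where concentration is needed.

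The paper splits near $\sqrt n$ instead: for $\ell\ge\sqrt n\,e^{\sqrt{\log n}}$ the SAW count is suppressed by the birthday factor $e^{-\ell^2/3n}$, and for $\sqrt n/e^{\sqrt{\log n}}\le\ell\le\sqrt n\,e^{\sqrt{\log n}}$ the harmonic window has length $O(\sqrt{\log n})$, so a first moment plus Markov handles both pieces whp. The dominant range $\ell\le\sqrt n/e^{\sqrt{\log n}}$, whose harmonic sum is $(\tfrac12+o(1))\log n$, requires a direct second-moment computation on $\tfrac1n\sum_x C_x^{(\ell)}$---with no decoupling of the $N$ and $S$ factors---carried out via a mode-shift enumeration of respectful pairs of NB$+$SAW walks. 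This is the bulk of the proof and cannot be replaced by Lemma~\ref{small-error} or by a crude $\max_y$ bound.
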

Next, we make some observations that will be used in the proofs that follow.
\begin{obs}\label{key-obss}
\begin{enumerate}
\item 
Under $A_n$, for each vertex $v$ and each $1\le\ell\le\max(r_v,\delta\log_d(n))$, $N_x^{(\ell)}\le2\cdot|B(x,\ell)|$. Indeed, to a specific destination within $B(v,\ell)$, there can only exist two Non-Backtracking walks of length $\ell$ (because there is at most one cycle). The same inequality holds for the number of walks originating from $x$ that do not leave $B_\ell(v)$, for any length.
\item
Under $A_n$, if $n$ is large, for every $\ell\le K\log_d(n)$ we have that $N_x^{(\ell)}\le d^\ell\cdot(\log(n))^{2K/\delta}$. Indeed, this follows directly from concatenation of walks of length $\delta\log_d(n)$ and the fact that we are under $A_n$.
\end{enumerate}
\end{obs}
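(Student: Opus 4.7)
The plan is to establish both bounds as deterministic consequences of the structural event $A_n$, combining a combinatorial property of unicyclic graphs (for Part 1) with a standard concatenation argument (for Part 2).

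For Part 1, note that $A_n^{(2)}$ together with $A_n^{(3)}$ guarantees that in the entire range $\ell\le\max(r_v,\delta\log_d(n))$, the induced subgraph on $B(v,\ell)$ has tree excess at most one, since an induced subgraph can only inherit or reduce cycles from the larger graph (the cycle space of $B(v,\ell)$ embeds into that of the ambient ball). The combinatorial core I would invoke is: in a connected graph with tree excess at most $1$, for any ordered pair of vertices $(x,y)$ and any length $\ell$, the number of non-backtracking walks of length $\ell$ from $x$ to $y$ is at most $2$. The cleanest justification passes to the universal cover $\tilde G$, which for a unicyclic graph is an infinite tree; NB walks from $x$ lift bijectively to self-avoiding walks from a chosen lift $\tilde x$, and the preimages of $y$ in $\tilde G$ form a $\mathbb{Z}$-indexed chain whose distances from $\tilde x$ grow essentially linearly in the wrapping number, so each length $\ell$ is attained at most twice. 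Alternatively, one can argue directly: any such NB walk either avoids the unique cycle (giving the unique tree path) or enters it at the first cycle vertex reached along the tree path from $x$, after which the NB constraint forces every subsequent step once the initial direction of cycle traversal has been chosen. Summing the per-destination bound over $y\in B(x,\ell)$ yields $N_x^{(\ell)}\le 2|B(x,\ell)|$; the variant confined to walks that never leave $B(v,\ell)$ follows identically, since they are themselves NB walks in the same unicyclic subgraph.

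For Part 2, the plan is to concatenate at scale $m:=\lfloor\delta\log_d(n)\rfloor$. Writing $\ell=km+r$ with $k\le K/\delta$ and $0\le r<m$, the elementary inequality
\[
N_x^{(\ell_1+\ell_2)} \;\le\; \sum_{y}N_{xy}^{(\ell_1)}\,N_y^{(\ell_2)} \;\le\; \Big(\max_y N_y^{(\ell_2)}\Big)\cdot N_x^{(\ell_1)}
\]
holds because every NB walk of length $\ell_1+\ell_2$ restricts to NB walks on its first $\ell_1$ and last $\ell_2$ edges (the converse map may fail at the junction, which is why the inequality is not equality). Iterating gives
\[
N_x^{(\ell)} \;\le\; \big(\max_y N_y^{(m)}\big)^{k}\cdot\max_y N_y^{(r)}.
\]
By Part 1 together with $A_n^{(1)}$, for every $t\le m$ we have $\max_y N_y^{(t)}\le 2\max_y|B(y,t)|\le 2c\log_d(n)\cdot d^t$. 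Substituting,
\[
N_x^{(\ell)} \;\le\; (2c\log_d(n))^{k+1}\cdot d^\ell \;\le\; (\log n)^{2K/\delta}\cdot d^\ell,
\]
where the last step uses $k+1\le K/\delta+1$ and absorbs the multiplicative constants for $n$ large enough.

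The main obstacle is the "at most two NB walks per destination" fact used in Part 1. While intuitive, a rigorous proof requires either the universal-cover argument sketched above or a direct case analysis that confirms, at every degree-$\ge 3$ vertex on the unique cycle, the NB constraint combined with the predetermined direction of initial traversal leaves no genuine freedom; the tree-excess $\le 1$ assumption is essential here, as a second independent cycle would reintroduce branching choices and break the bound. Part 2, once Part 1 is in hand, is just careful bookkeeping modulo the observation that concatenation of NB walks is at most an overcount.
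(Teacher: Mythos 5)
Your argument is correct and fills in exactly the two facts the paper leaves implicit: for Part 1, the monotonicity of cycle rank under passing to a subgraph, together with the fact that a connected graph of tree excess at most one admits at most two non-backtracking walks of any given length between any ordered pair of vertices (your universal-cover lift and your direct case analysis are both valid ways to see this); for Part 2, the submultiplicativity $N_x^{(\ell_1+\ell_2)}\le N_x^{(\ell_1)}\cdot\max_y N_y^{(\ell_2)}$ iterated at scale $\delta\log_d(n)$ combined with $A_n^{(1)}$. This is the same route the observation itself sketches, just spelled out.
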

Before moving on to the proofs, we explain why those lemmas combined with the rest of the work in this section is enough to prove that condition 4 holds with high probability in $G(n,d/n)$. 
\begin{proof}[Proof of Proposition \ref{ER-whp-prop}]
As we explained in subsection \ref{4.1}, the first three properties in Theorem \ref{main-thm-determ} are satisfied by $G(n,d/n)$ with high probability. We turn to property 4.\\\\
We work on the event $A_n$. 
For $1\le\ell\le\delta\log_d(n)$,
$$S_v^{(\ell)}\le2\cdot|B(v,\ell)|\le C'\cdot d^\ell.$$
If $r_v=\infty$, this inequality is trivially satisfied for all $\ell\in\N$, as the connected component of $v$ contains at most $O(\log_d(n))$ vertices.\\\\
If $r_v<\delta\log_d(n)$, due to Lemma \ref{mid-range},
$S_v^{(\ell)}\le C'\cdot d^{\ell}$
for any $\ell\le K\log_d(n)$, so $$S_v^{(\ell)}\le C'\cdot d^\ell,\ \ \text{for all}\ \ 1\le\ell\le K\log_d(n).$$
If $r_v\ge\delta\log_d(n)$, for every $\delta\log_d(n)\le\ell\le r_v$,
$$S_v^{(\ell)}\le C'\cdot n^{\delta/10}= C'\cdot d^{\delta\log_d(n)/10}\le C'\cdot d^\ell,$$
while for $\ell\in[1,\delta\log_d(n)]\cup[r_v,K\log_d(n)]$ this inequality is true, as we explained above.\\\\
Next, let $\ell>K\log_d(n)$. Due to Lemmas \ref{mixing-NB}, \ref{average-vertex} and \ref{mid-range}, the following chain of inequalities is true with high probability:
\begin{align*}
N_{zy}^{(L)}&\le\dfrac{v(y)}{\lVert v\rVert_1}\cdot N_z^{(L)}+d^{(K+10)\log_d(n)/2}\\&\le\dfrac{v(y)}{\lVert v\rVert_1}\cdot c^3\cdot c_1^{-1}\cdot\dfrac{1}{n}\SL{x}N_x^{(L)}+d^{(K+10)\log_d(n)/2}\\&\le c^3\cdot c_1^{-1}\cdot\dfrac{1}{n}\SL{x}N_{xy}^{(L)}+(1+c^3\cdot c_1^{-1})\cdot d^{(K+10)\log_d(n)/2}.
\end{align*}
Thus,
\begin{align*}
S_v^{(\ell)}&\le\SL{y}N_{vy}^{(L)}\cdot S_y^{(\ell-L)}\\&\le C\cdot\left(\dfrac{1}{n}\SL{x}\SL{y}N_{xy}^{(L)}\cdot S_y^{(\ell-L)}+d^{(K+10)\log_d(n)/2}\cdot\SL{y}S_y^{(\ell-L)}\right).
\end{align*}
Therefore, due to Lemma \ref{small-error}, for each $\ell>K\log_d(n),$
$$\sup\limits_{v\in B}S_v^{(\ell)}\le1+\dfrac{C}{n}\cdot\SL{x}\SL{y}N_{xy}^{(L)}\cdot S_y^{(\ell-L)}.$$
Putting everything together and using Lemma \ref{task5-average},
\begin{align*}
&\SL{\ell=1}^nd^{-\ell}\cdot\dfrac{1}{\ell}\cdot\sup\limits_{v\in B}S_v^{(\ell)}\\\le\ & C'\cdot K\cdot\log_d(n)+\SL{\ell>K\log_d(n)}d^{-\ell}\cdot\dfrac{1}{\ell}\cdot\left(1+\dfrac{C}{n}\cdot\SL{x}\SL{y}N_{xy}^{(L)}\cdot S_y^{(\ell-L)}\right)
\\\le\ &C'\cdot K\cdot\log_d(n)+(1-d^{-1})^{-1}+C^2\log(n)\\\le\ &C''\cdot\log(n),
\end{align*}
which is exactly what we intended to prove.
\end{proof}
\begin{proof}[Proof of Lemma \ref{average-vertex}]
We perform a second moment calculation. Our intention is to prove that
\begin{align}\label{2-mom-rel}
\E\left[\left(\dfrac{1}{n}\SL{x}N_x^{(L)}\right)^2\cdot\mathbf{1}_{A_n}\right]\le(1+o(1))\cdot\E\left[\left(\dfrac{1}{n}\SL{x}N_x^{(L)}\right)\cdot\mathbf{1}_{A_n}\right]^2,
\end{align}
and then use Chebyshev's Inequality. Considering only walks that are Self-Avoiding and avoid each other, together with Lemma \ref{cond-properties}, implies that
\begin{align}\label{average-1-mom}
\E\left[\left(\dfrac{1}{n}\SL{x}N_x^{(L)}\right)\cdot\mathbf{1}_{A_n}\right]\ge(1-o(1))\cdot d^L.
\end{align}
We study the Left-Hand Side of (\ref{2-mom-rel}). A pair $(w_1,w_2)$ of walks on $K_n$ is called \textit{respectful} if $\{w_1\in G\}\cap\{w_2\in G\}\cap A_n\neq\varnothing$, i.e. if there is a realization of $G$ in which $w_1,w_2$ are present and $A_n$ is satisfied. Denote by $\mathcal{R}^{(2)}$ the set of respectful pairs $(w_1,w_2)$. Of course,
\begin{align*}
\E\left[\left(\dfrac{1}{n}\SL{x}N_x^{(L)}\right)^2\cdot\mathbf{1}_{A_n}\right]&=\dfrac{1}{n^2}\cdot\SL{(w_1,w_2)}\PR\left(\{w_1\in G\}\cap\{w_2\in G\}\cap A_n\right)\\&\le\dfrac{1}{n^2}\cdot\SL{(w_1,w_2)\in\mathcal{R}^{(2)}}\PR(\{w_1\in G\}\cap\{w_2\in G\}),
\end{align*}
where both sums run over all possible pairs of Non-Backtracking walks $(w_1,w_2)$ of length $L$ in the complete graph $K_n$.
Let $(w_1,w_2)$ be one such pair and $w_1\cup w_2$ be the set of edges covered by either of those walks. Then, due to the independence in $G(n,d/n)$,
$$\PR(\{w_1\in G\}\cap\{w_2\in G\})=\left(\dfrac{d}{n}\right)^{|w_1\cup w_2|}.$$
In order to analyze $|w_1\cup w_2|$, we run $w_1$ and $w_2$ sequentially, first going through $w_1$ and then through $w_2$. Throughout this process, we keep track of the edges that have been revealed, the set of which is called the \textit{past} of the process. At each time, the process is in one of the following two modes: It either explores a new edge, if the edge that is being crossed has never been crossed in the past of the process, or it visits the past, if the edge being crossed has already been revealed. Therefore, when we run $w_1$, we are initially exploring new edges. The same happens with $w_2$ if its starting point was not visited by $w_1$, otherwise we start by visiting the past, with this part of the walk lasting 0 steps if necessary. Assume the following about the pair of walks:
\begin{itemize}
\item
Let $a=1$ or $0$, if the starting point of $w_2$ was visited by $w_1$ or not, respectively.
\item 
Each walk $w_i$ makes $m_i$ mode shifts between exploring new vertices and visiting the past, with the lengths of these trips being $\gamma_1^{(i)},...,\gamma_{m_i}^{(i)}$ (for $i=1,2$). In other words: For $w_1$, for even $k$, at times $t\in\left[1+\SL{j=1}^k\gamma_j^{(1)},\SL{j=1}^{k+1}\gamma_j^{(1)}\right]$, the walk explores new edges, while for odd $k$, between times $t\in\left[1+\SL{j=1}^k\gamma_j^{(1)},\SL{j=1}^{k+1}\gamma_j^{(1)}\right]$ the walk visits the past. Also, for $w_2$, for $k\equiv a (\bmod2)$, between times $t\in\left[1+\SL{j=1}^k\gamma_j^{(2)},\SL{j=1}^{k+1}\gamma_j^{(2)}\right]$ the walk explores new edges and for $k\neq a(\bmod2)$, between times $t\in\left[1+\SL{j=1}^k\gamma_j^{(2)},\SL{j=1}^{k+1}\gamma_j^{(2)}\right]$ the walk visits the past. The lengths $\gamma_j^{(i)}$ satisfy the equations
\begin{align}\label{gamma-eq-2}
\gamma_1^{(i)}+\cdots+\gamma_{m_i}^{(i)}=L.
\end{align}
\item 
Let $M$ be the number of trips to the past, which happened after an exploration of new vertices within the same walk $w_i$. From this description, we can see that
\begin{align}\label{M-eq-2}
M=\left\lfloor\dfrac{m_1}{2}\right\rfloor+\left\lfloor\dfrac{m_2-a}{2}\right\rfloor.
\end{align}
\end{itemize}
If the pair $(w_1,w_2)$ satisfies these assumptions, since $|w_1\cup w_2|$ is exactly the time spent exploring new edges,
\begin{align}\label{prob-both-in}
\PR(\{w_1\in G\}\cap\{w_2\in G\})=\left(\dfrac{d}{n}\right)^{|w_1\cup w_2|}=\left(\dfrac{d}{n}\right)^{\SL{j\equiv1(\bmod2)}\gamma_j^{(1)}+\SL{j\neq a(\bmod2)}\gamma_j^{(2)}}.
\end{align}
Next, we bound the number of respectful pairs of walks in $K_n$ that satisfy these assumptions.\\\\ 
If $(w_1,w_2)\in\mathcal{R}^{(2)}$, due to Observation \ref{key-obss}, during each trip to the past of length $\gamma$, the number ways to carry out this trip is at most $d^\gamma\cdot(\log(n))^{2K/\delta}$.\\\\ 
Consider a trip discovering new edges of length $\gamma$. If there is no shift of the mode to the past afterward, the trip can be carried out in at most $n^\gamma$ ways. However, if there is a shift of the mode to the past afterward, it can only be performed in $n^{\gamma-1}\cdot2K\log_d(n)$ ways, as there are at most $2K\log(n)$ choices for the starting location of the trip to the past that follows.\\\\ 
If $a=0$, the number of ways to choose the pair of starting points is at most $n^2$, whereas if $a=1$, the number of ways to choose the pair of starting points is at most $Kn\log_d(n)$, as the second starting points can only be among the ones visited by $w_1$.
So, the number of pairs of paths in $K_n$ satisfying these conditions is at most
\begin{align*}
n^2\cdot d^{\SL{j\equiv0(\bmod2)}\gamma_j^{(1)}+\SL{j\equiv a(\bmod2)}\gamma_j^{(2)}}\cdot(\log(n)^{2K/\delta})^{M+a}&\cdot n^{\SL{j\equiv1(\bmod2)}\gamma_j^{(1)}+\SL{j\neq a(\bmod2)}\gamma_j^{(2)}}\\&\cdot\left(\dfrac{c_0\cdot\log(n)}{n}\right)^{M+a}.
\end{align*}
Due to (\ref{prob-both-in}) and (\ref{gamma-eq-2}), the expected number of paths in $G$ satisfying these conditions is at most
$$n^2\cdot d^{2L}\cdot\left(\dfrac{c_0\log(n)^{1+2K/\delta}}{n}\right)^{M+a}\le n^2\cdot d^{2L}\cdot\left(\dfrac{c_0\log(n)^{1+2K/\delta}}{n}\right)^{M+a}.$$
The number of solutions to equation (\ref{gamma-eq-2}) is at most $(K\log_d(n))^{m_i-1}$ and
$$(m_1-1)+(m_2-1)=(m_1-1)+(m_2-1-a)+a\le2M+a.$$
Also, the number $n_{M,a}$ of solutions to (\ref{M-eq-2}) is at most
$$
\begin{cases}
1,\ \text{if}\ M=a=0\\
2,\ \text{if}\ M=0\ \text{and}\ a=1\\
4(M+1),\ \text{if}\ M\ge1.
\end{cases}
$$
Putting all of these together,
\begin{align*}
&\E\left[\left(\dfrac{1}{n}\SL{x}N_x^{(L)}\right)^2\cdot\mathbf{1}_{A_n}\right]\\\le\ &
\dfrac{1}{n^2}\cdot\SL{(w_1,w_2)\in\mathcal{R}^{(2)}}\PR(\{w_1\in G\}\cap\{w_2\in G\})\\ \le\ &d^{2L}\cdot\SL{a\in\{0,1\}}\SL{M\ge0}\left(\dfrac{c_0\log(n)^{3+2K/\delta}}{n}\right)^{M+a}\cdot n_{M,a}
\\ \le\ &d^{2L}\cdot\SL{M\ge0}(M+1)\cdot\left(1+\dfrac{2c_0\log(n)^{3+2K/\delta}}{n}\right)\cdot\left(\dfrac{c_0\log(n)^{3+2K/\delta}}{n}\right)^{M}
\\=\ &(1+o(1))\cdot d^{2L}.
\end{align*}
So, if
$$Y_n:=\dfrac{1}{n}\SL{x}N_x^{(L)},$$
due to Chebyshev's inequality,
\begin{align*}
\PR\left(Y_n<\dfrac{1}{2}\cdot \E(Y_n)\right)\le\PR(A_n^c)+\dfrac{4\cdot\E\left[\left(Y_n-\E(Y_n)\right)^2\cdot\mathbf{1}_{A_n}\right]}{\E(Y_n\cdot\mathbf{1}_{A_n})^2}=o(1),
\end{align*}
as $n\to\infty$, which is what we wanted.
\end{proof}
\begin{proof}[Proof of Lemma \ref{mid-range}]
In the proof of Lemma \ref{tx-1}, we proved that with high probability, for all vertices $v\in V$, $|B(v,r_v)|\le2dn^{\delta/10}$. Also, for a vertex $v\in B$, due to Remark \ref{balls-slowly},
$$|B(v,\delta\log_d(n)/10-\log_d(C'))|\le C\cdot(1-d^{-1})^{-1}\cdot \dfrac{n^{\delta/10}}{C'},$$
hence if $C'$ is large enough, $r_v\ge\delta\log_d(n)/10-\log(C')$ and $d^{r_v}\ge n^{\delta/10}/C'$. All of these inequalities imply the second and third inequality of the lemma.\\\\
We now aim to prove the first inequality. We condition on $B_{r_v}(v)$. Consider the graph $K_v$, which is the same as $K_n$, except for the edges that are not present due to conditioning on $B_{r_v}(v)$. In other words, in $K_v$, the neighbors of vertices in $B_{r_v}(v)\setminus\partial B_{r_v}(v)$ are only the ones imposed by $B_{r_v}(v)$, whereas all the other edges are present. For any $\ell$, due to Observation \ref{key-obss}, under $A_n$, the number of walks of length $\ell$ not exiting $B_{r_v}(v)$ is at most $2\cdot|B_{r_v}(v)|$. Also, for any $w\in\partial B_{r_v}(v)$ and $1\le\ell'\le\ell-r_v$, we denote by $\tilde{N}_w^{(\ell')}$ to be the number of Non-Backtracking walks originating from $w$ that at their first step leave $B_{r_v}(v)$. Of course, for any $\ell$, conditioned on $B_{r_v}(v)$,
$$N_v^\ell\le2\cdot\SL{\ell'=1}^{\ell-r_v}\SL{w\in\partial B_{r_v}(v)}\tilde{N}_w^{(\ell')}+2\cdot|B_{r_v(v)}|.$$
Also,
\begin{align*}
&\PR\left(\left\{N_v^{(\ell)}>2c\cdot d^{\ell-r_v}\cdot|\partial B_{r_v}(v)|+2\cdot|B_{r_v}(v)|\right\}\cap A_n\right)\\\le\ &
\E\left[\PR\left(\left\{N_v^{(\ell)}>2c\cdot d^{\ell-r_v}\cdot|\partial B_{r_v}(v)|+2\cdot|B_{r_v}(v)|\right\}\cap A_n\right)|B_{r_v}(v)\right]\\\le\ &
\E\left[\PR\left(\left\{\SL{\ell'=1}^{\ell-r_v}\SL{w\in\partial B_{r_v}(v)}\tilde{N}_w^{(\ell')}>c\cdot d^{\ell-r_v}\cdot|\partial B_{r_v}(v)|\right\}\cap A_n\right)|B_{r_v}(v)\right].
\end{align*}
Let $s=\lceil\log(n)\rceil$. We want to prove that
\begin{align}\label{s-mom-rel}
\E\left[\left(\SL{\ell'=1}^{\ell-r_v}\SL{w\in\partial B_{r_v}(v)}\tilde{N}_w^{\ell'}\right)^s\mathbf{1}_{A_n}|B_{r_v}(v)\right]\le(1+o(1))\cdot\E\left[\SL{\ell'=1}^{\ell-r_v}\SL{w\in\partial B_{r_v}(v)}\tilde{N}_w^{\ell'}|B_{r_v}(v)\right]^s.
\end{align}
At first, considering only $s$-tuples of walks that are Self-Avoiding and non-intersecting with one another, we can see that
\begin{align*}
&\E\left[\SL{\ell'=1}^{\ell-r_v}\SL{w\in\partial B_{r_v}(v)}\tilde{N}_w^{\ell'}|B_{r_v}(v)\right]\ge\left(1-O\left(\dfrac{\log^2(n)}{n}\right)\right)\cdot|\partial B_{r_v}(v)|\cdot\SL{\ell'=1}^{\ell-r_v}d^{\ell'}\\\Rightarrow\ &
\E\left[\SL{\ell'=1}^{\ell-r_v}\SL{w\in\partial B_{r_v}(v)}\tilde{N}_w^{\ell'}|B_{r_v}(v)\right]^s\ge(1-o(1))\cdot|\partial B_{r_v}(v)|^s\cdot\left(\SL{\ell'=1}^{\ell-r_v}d^{\ell'}\right)^s.
\end{align*}
We now proceed to estimate the Left-Hand Side of (\ref{s-mom-rel}), by considering all possible $s$-tuples of walks starting from $\partial B_{r_v}(v)$ and leaving $B_{r_v}(v)$ at the first step. Similarly as in the proof of Lemma \ref{average-vertex}, an $s$-tuple of walks $(w_1,w_2,...,w_s)$ on $K_v$ will be called \textit{respectful}, if $\{w_1\in G\}\cap\cdots\cap\{w_s\in G\}\cap A_n\neq\varnothing$, i.e. if there is a realization of $G$ in which $w_1,...,w_s$ are present and $A_n$ is satisfied. Call $\mathcal{R}^{(s)}$ the set of $s$-tuples of respectful walks. Then, similarly as in the proof of Lemma \ref{average-vertex},
$$\E\left[\left(\SL{\ell'=1}^{\ell-r_v}\SL{w\in\partial B_{r_v}(v)}\tilde{N}_w^{\ell'}\right)^s\mathbf{1}_{A_n}|B_{r_v}(v)\right]\le\SL{(w_1,...,w_s)\in\mathcal{R}^{(s)}}\PR(w_i\in G,\ \forall i\ |B_{r_v}(v)),$$
where the sum is over all respectful $s$-tuples of walks $(w_1,w_2,...,w_s)$ of lengths $\ell'\in[1,\ell-r_v]$.
We run the walks sequentially as before, first going through $w_1$, then $w_2$, etc., until $w_s$. Throughout this process, we keep track of the edges that have been revealed, which, together with the edges of $B_{r_v}(v)$ is called the past of the process. The starting point of a walk $w_i$ (or, sometimes, $i$ itself, for simplicity) will be called \textit{original} if it has not been visited by any of the walks $w_1,...,w_{i-1}$ and \textit{visited} otherwise. Similarly as in the proof of Lemma \ref{average-vertex}, if the starting point of $w_i$ is visited, we will assume that the walk starts from visiting the past, with this trip lasting 0 steps, if necessary. Suppose an $s$-tuple of walks $(w_1,w_2,...,w_s)$ satisfies the following:
\begin{itemize}
\item 
The walks $w_1,...,w_s$ have lengths $\ell_1,...,\ell_s$, respectively.
\item 
Exactly $a$ of the starting points of the walks are visited.
\item 
Each walk $w_i$ makes $m_i$ shifts of mode between exploring new edges and visiting the past, with the lengths of these trips being $\gamma_1^{(i)},\gamma_2^{(i)},...,\gamma_{m_i}^{(i)}$, for $1\le i\le s$. In other words: If the starting point of $w_i$ is original, for even $k$, at times $t\in\left[1+\SL{j=1}^k\gamma_j^{(i)},\SL{j=1}^{k+1}\gamma_j^{(i)}\right]$ $w_i$ explores new edges and for odd $k$, at times $t\in\left[1+\SL{j=1}^k\gamma_j^{(i)},\SL{j=1}^{k+1}\gamma_j^{(i)}\right]$ the walk $w_i$ visits the past. If the starting point of $w_i$ is visited, for odd $k$, at times $t\in\left[1+\SL{j=1}^k\gamma_j^{(i)},\SL{j=1}^{k+1}\gamma_j^{(i)}\right]$ $w_i$ explores new edges and for even $k$, at times $t\in\left[1+\SL{j=1}^k\gamma_j^{(i)},\SL{j=1}^{k+1}\gamma_j^{(i)}\right]$ the walk $w_i$ visits the past. The lengths $\gamma_j^{(i)}$ satisfy the equations 
\begin{align}\label{gamma-eq}
\gamma_1^{(i)}+\gamma_2^{(i)}+...+\gamma_{m_i}^{(i)}=\ell_i.
\end{align}
\item
Let $M$ be the number of trips to the past, which happened after an exploration of new vertices within the same walk $w_i$ (i.e. the trips to the past started at touched or unoriginal vertices are the only ones that do not count towards $M$). From this description, we deduce that
\begin{align}\label{M-eq}
M=\SL{i:\ \text{original}}\left\lfloor\dfrac{m_i}{2}\right\rfloor+\SL{i:\ \text{visited}}\left\lfloor\dfrac{m_i-1}{2}\right\rfloor.
\end{align}
\end{itemize}
If the $s$-tuple $(w_1,\dots,w_s)$ satisfies these assumptions, due to the independence in $G(n,d/n)$,
\begin{equation}\label{prob-all-in}
\PR(w_i\in G,\ \forall i\ |B_{r_v}(v))=\left(\dfrac{d}{n}\right)^{\SL{i: \text{original}}\SL{j\equiv1(\bmod2)}\gamma_j^{(i)}+\SL{i:\text{visited}}\SL{j\equiv0(\bmod2)}\gamma_j^{(i)}}.
\end{equation}
We now calculate the number of respectful $s$-tuples of walks in $K_v$ that satisfy these assumptions. \\\\
Because $(w_1,\dots,w_s)\in\mathcal{R}^{(s)}$, due to Observation \ref{key-obss}, during each trip to the past of length $\gamma$, the number of ways to carry out this trip is at most $d^{\gamma}\cdot\log(n)^{2K/\delta}$.\\\\
Consider a trip discovering new edges of length $\gamma$. If there is no shift of mode to the past afterwards, the trip can be carried out in at most $n^\gamma$ ways. However, if a trip to the past follows, there are at most $n^{\gamma-1}\cdot(cn^{\delta/10}+c\log(n)^2)\le2c\cdot n^{\gamma-1+\delta/10}$ ways to carry out the trip, as there are at most $cn^{\delta/10}+c\log(n)^2$ possibilities for the starting point of the trip to the past that follows.\\\\
There are $\binom{s}{a}$ ways of choosing the set of $i$ which will be original. When $i$ is original, there are at most $|\partial B_{r_v}(v)|$ ways to choose the starting point of $w_i$, whereas when $i$ is visited, there are at most $c\log(n)^2$ ways to choose the starting point of $w_i$. Therefore, there are at most
$$\dbinom{s}{a}\cdot|\partial B_{r_v}(v)|^{s-a}\cdot(c\log(n)^2)^a\le|\partial B_{r_v}(v)|^s\cdot\left(\dfrac{c_0\log(n)^3}{n^{\delta/10}}\right)^a$$
ways to choose the starting points of the $w_i$'s. Working as in the proof of Lemma \ref{average-vertex}, i.e. combining (\ref{prob-all-in}) with the counting bounds we mentioned, we find that the expected number of respectful $s$-tuples of walks in the graph satisfying the above conditions is at most
\begin{align*}
&|\partial B_{r_v}(v)|^s\cdot\left(\dfrac{c_0\log(n)^3}{n^{\delta/10}}\right)^a\cdot d^{\ell_1+\cdots+\ell_s}\cdot(\log(n)^{2K/\delta})^{M+a}\cdot\left(\dfrac{c_0\log(n)}{n^{1-\delta/10}}\right)^M\\\le\ & |\partial B_{r_v}(v)|^s\cdot d^{\ell_1+\ell_2+\cdots+\ell_s}\cdot\left(\dfrac{c_0\log(n)^{3+2K/\delta}}{n^{\delta/10}}\right)^a\cdot\left(\dfrac{c_0\cdot\log(n)^{1+2K/\delta}}{n^{1-\delta/10}}\right)^M.
\end{align*}
Observe that the number of solutions to equation (\ref{gamma-eq}) is at most $\ell_i^{m_i-1}\le(K\log_d(n))^{m_i-1}$ and that
\begin{align*}
\SL{i}(m_i-1)=\SL{i:\text{original}}(m_i-1)+\SL{i:\text{visited}}(m_i-2)+\SL{i:\text{visited}}1\le2M+a.
\end{align*}
We also need to study the number of solutions to equation (\ref{M-eq}). We prove that for any $M$ and $a$, the number of solutions is at most $(K\log_d(n))^a\cdot(8s)^M$.\\\\ 
For $M>s$, there are $\binom{s+M-1}{s-1}$ ways to choose the values of the integer parts and at most $2^s$ ways to realize these values. Therefore, since $M>s$, the number of solutions in this case is 
$$\le\dbinom{s+M-1}{s-1}\cdot2^s\le2^{s+M}\cdot2^s\le8^M\le(K\log_d(n))^a\cdot(8s)^M.$$
Let $M\le s$. Since $m_i\le K\log_d(n)$ for each visited $i$, there are at most $(K\log_d(n))^a$ choices for the values of the $a$-tuple of $m_i$'s, when $i$ is visited. For each one of these, the value (call it $q$) of $\SL{i:\text{original}}\lfloor\frac{m_i}{2}\rfloor$ is uniquely determined and $\le M$. Observe that at most $M$ of the terms in the above sum are non-zero. Since the number of terms is at most $s$, so there are at most $s^M$ ways of choosing $M$ of them that are candidates to be non-zero. Observe that for the terms that are chosen to be 0, the respective $m_i$'s are determined to be equal to 1. As far as the terms that are chosen to have the possibility of being non-zero, the number of ways to choose the $m_i$'s after we have chosen the values of the integer parts is at most $2^M$. The number of ways to choose the values of the integer parts is $\le\binom{q+M}{M}\le\binom{2M}{M}\le4^M$. In total, there are at most
$$(K\log_d(n))^a\cdot s^M\cdot2^M\cdot4^M=(K\log_d(n))^a\cdot(8s)^M$$
solutions to Equation (\ref{M-eq}) in this case as well.\\\\
Putting everything together,
\begin{align*}
&\E\left[\left(\SL{\ell'=1}^{\ell-r_v}\SL{w\in\partial B_{r_v}(v)}\tilde{N}_w^{\ell'}\right)^s\mathbf{1}_{A_n}|B_{r_v}(v)\right]\\\le\ &\SL{(w_1,...,w_s)\in\mathcal{R}^{(s)}}\PR(w_i\in G,\ \forall i\ |B_{r_v}(v))\\\le\
&|\partial B_{r_v}(v)|^s\cdot\SL{\ell_1,\dots,\ell_s}d^{\ell_1+\cdots+\ell_s}\SL{a\ge0}\SL{M\ge0}\left(\dfrac{c_0\log(n)^{4+2K/\delta}}{n^{\delta/10}}\right)^a\cdot\left(\dfrac{c_0\cdot\log(n)^{2+2K/\delta}}{n^{1-\delta/10}}\right)^M
\\=\ & (1+o(1))\cdot|\partial B_{r_v(v)}|^s\cdot\left(\SL{\ell'=1}^{\ell-r_v}d^{\ell'}\right)^s.
\end{align*}
We have proven (\ref{s-mom-rel}). For a vertex $v$, set
$$X_v:=\SL{\ell'=1}^{\ell-r_v}\SL{w\in\partial B_{r_v}(v)}\tilde{N}_w^{\ell'}.$$ 
Using Markov's inequality, we can deduce that for each $v,\ell$, and for every $c>0$,
\begin{align*}
&\PR\left(\left\{N_v^{(\ell)}>2c\cdot d^{\ell-r_v}\cdot|\partial B_{r_v}(v)|+2\cdot|B_{r_v}(v)|\right\}\cap A_n\right)\\\le\ &\E\left[\PR\left(X_v\cdot\mathbf{1}_{A_n}>c\cdot d^{\ell-r_v}\cdot|\partial B_{r_v}(v)|\right)|B_{r_v}(v)\right]\\\le\
&\E\left[\PR\left(X_v\cdot\mathbf{1}_{A_n}>\dfrac{c}{2\cdot(1-d^{-1})^{-1}}\cdot\E\left[X_v|B_{r_v}(v)\right]\right)|B_{r_v}(v)\right]\\\le\
&\E\left[\dfrac{\E\left[X_v^s\cdot\mathbf{1}_{A_n}|B_{r_v}(v)\right]}{\E\left[X_v|B_{r_v}(v)\right]^s}\ |B_{r_v}(v)\right]\cdot\left(\dfrac{2\cdot(1-d^{-1})^{-1}}{c}\right)^s
\\=\ &(1+o(1))\cdot\left(\dfrac{2\cdot(1-d^{-1})^{-1}}{c}\right)^s.
\end{align*}
We let $c>0$ be large enough, so that this bound is $o(n^{-2})$. Then, taking a union bound over values of $\ell$ and vertices $v$ and keeping in mind that $A_n$ is a high probability event finishes the proof of Lemma \ref{mid-range}.
\end{proof}
\begin{proof}[Proof of Lemma \ref{task5-average}]
Observe that for any $x\in V$ and $\ell\ge L$, the quantity
$$C_x^{(\ell)}:=\SL{y}N_{xy}^{(L)}\cdot S_y^{(\ell-L)}$$
counts the number of walks originating from $x$ that are for the first $L$ steps Non-Backtracking and for the next $\ell-L$ steps Self-Avoiding. From now on, when we talk about a walk, we assume that these are the restrictions on it. At first, we reduce the sum into counting of walks of length at most $\sqrt{n}/e^{\sqrt{\log(n)}}$. Indeed, at first observe that if $\ell\ge\sqrt{n}\cdot e^{\sqrt{\log(n)}}$, during the Non-Backtracking part of the walk, at most $K\log_d(n)$ edges are created. The number of Self-Avoiding paths during the second part of the walk is at most
$$(n-1)(n-2)\cdots(n-\ell+K\log_d(n))\le n^\ell\cdot\exp(-\ell^2/3n)\le n^\ell\cdot\exp\left(-e^{2\sqrt{\log(n)}}/3\right).$$
Therefore, since for each path all but at most $K\log_d(n)$ of the edges are unexplored,
\begin{align*}
\E(C_x^{(\ell)})&\le n^{K\log_d(n)}\cdot\left(\dfrac{d}{n}\right)^{\ell-K\log_d(n)}\cdot n^\ell\cdot\exp\left(-e^{2\sqrt{\log(n)}}/3\right)\\&\le d^\ell\cdot\exp\left(-e^{2\sqrt{\log(n)}}/3+cK\log(n)^2\right)\\&=o(1)\cdot d^\ell.
\end{align*}
We conclude that
\begin{align*}
&\E\left[\dfrac{1}{n}\SL{x}\SL{\ell\ge\sqrt{n}\cdot e^{\sqrt{\log(n)}}}d^{-\ell}\cdot\dfrac{1}{\ell}\cdot C_x^{(\ell)}\right]=o(\log(n))\\\Rightarrow\ &
\PR\left(\dfrac{1}{n}\SL{x}\SL{\ell\ge\sqrt{n}\cdot e^{\sqrt{\log(n)}}}d^{-\ell}\cdot\dfrac{1}{\ell}\cdot C_x^{(\ell)}>\log(n)\right)=o(1),
\end{align*}
which is what we wanted for this part of the sum.\\\\ 
Next, we treat the part of the sum for $\sqrt{n}/e^{\sqrt{\log(n)}}\le\ell\le\sqrt{n}\cdot e^{\sqrt{\log(n)}}.$ We bound the quantity $\E(C_x^{(\ell)}\cdot\mathbf{1}_{A_n})$, using similar techniques as before. Let $w$ be a walk of length $\ell$ that is Non-Backtracking for the first $K\log_d(n)$ steps and Self-Avoiding for the next $\ell-L$ steps. As before, let $m$ be the number of total shifts of mode, from exploring new edges to returning to the past, with the lengths of these trips being $\gamma_1,...,\gamma_m$ and satisfying the equation
\begin{align}\label{gamma-eq-3}
\gamma_1+\cdots+\gamma_m=\ell.
\end{align}
Observe that the number of solutions to (\ref{gamma-eq-3}) is at most $\ell^{\lceil\frac{m}{2}\rceil-1}\cdot(K\log_d(n))^{\lfloor\frac{m}{2}\rfloor}$. Indeed, the $\gamma_i$ that correspond to trips to the past (i.e. the ones with even $i$) cannot be longer than $K\log_d(n)$, as after $K\log_d(n)$ steps the walk is Self-Avoiding. This means that the only possible visits to the past happen towards the Non-Backtracking part of the walk, so each visit cannot be longer than the size of this part. The variables $\gamma_i$ with odd $i$ are $\lceil\frac{m}{2}\rceil$ in total and their sum is a fixed number at most $\ell$, hence the bound we claimed. Therefore:
\begin{align*}
\E(C_x^{(\ell)}\cdot\mathbf{1}_{A_n})&\le d^\ell\cdot\SL{m}\left(\dfrac{c_0\log(n)}{n}\right)^{\lfloor \frac{m}{2}\rfloor}\cdot\left(\log(n)^{2K/\delta}\right)^{\lfloor\frac{m}{2}\rfloor}\cdot\ell^{\lfloor\frac{m}{2}\rfloor}\cdot(K\log(n))^{\lfloor\frac{m}{2}\rfloor}\\&=(1+o(1))\cdot d^\ell.
\end{align*}
This calculation combined with the fact that
$$\SL{\ell=\sqrt{n}/e^{\sqrt{\log(n)}}}^{\sqrt{n}\cdot e^{\sqrt{\log(n)}}}\dfrac{1}{\ell}=(2+o(1))\sqrt{\log(n)}=o(\log(n)),$$
implies
\begin{align*}
&\E\left[\dfrac{1}{n}\SL{x}\SL{\ell=\sqrt{n}/e^{\sqrt{\log(n)}}}^{\sqrt{n}\cdot e^{\sqrt{\log(n)}}}d^{-\ell}\cdot\dfrac{1}{\ell}\cdot C_x^{(\ell)}\right]=o(\log(n))\\\Rightarrow\ &
\PR\left(\dfrac{1}{n}\SL{x}\SL{\ell=\sqrt{n}/e^{\sqrt{\log(n)}}}^{\sqrt{n}\cdot e^{\sqrt{\log(n)}}}d^{-\ell}\cdot\dfrac{1}{\ell}\cdot C_x^{(\ell)}>\log(n)\right)=o(1).
\end{align*}
We are left to deal with the case $\ell\le\sqrt{n}/e^{\sqrt{\log(n)}}$. For that, we again perform a second moment calculation. At first, keeping in mind Lemma \ref{cond-properties} and summing only over Self-Avoiding walks $w$ of length $\ell$ we find that 
\begin{align*}
\E(C_x^{(\ell)}\cdot\mathbf{1}_{A_n})&\ge\SL{w}\PR(\{w\in G\}\cap A_n)
\\&=\SL{w}\PR(A_n|w\in G)\cdot\PR(w\in G)
\\&=(1-o(1))\cdot\SL{w}\PR(w\in G)
\\&=(1-o(1))\cdot(n-1)(n-2)\cdots(n-\ell)\cdot\left(\dfrac{d}{n}\right)^\ell
\\&=
(1-o(1))\cdot d^\ell,
\end{align*}
since $\ell\le\sqrt{n}/e^{\sqrt{\log(n)}}$. We can now see that
\begin{align}\label{1-mom-avg}
\E\left[\left(\dfrac{1}{n}\SL{x}\SL{\ell=1}^{\sqrt{n}/e^{\sqrt{\log(n)}}}d^{-\ell}\cdot C_x^{(\ell)}\cdot\dfrac{1}{\ell}\right)\cdot\mathbf{1}_{A_n}\right]\ge\left(\dfrac{1}{2}-o(1)\right)\cdot\log(n).
\end{align}
Next, we bound the quantity
\begin{align*}
&\E\left[\left(\dfrac{1}{n}\SL{x}\SL{\ell=1}^{\sqrt{n}/e^{\sqrt{\log(n)}}}d^{-\ell}\cdot C_x^{(\ell)}\cdot\dfrac{1}{\ell}\right)^2\cdot\mathbf{1}_{A_n}\right]
\\=\ &\SL{1\le\ell_1,\ell_2\le\sqrt{n}/e^{\sqrt{\log(n)}}}d^{-\ell_1+\ell_2}\cdot\dfrac{1}{\ell_1\ell_2}\cdot\E\left[\dfrac{1}{n^2}\left(\SL{x_1,x_2}C_{x_1}^{(\ell_1)}\cdot C_{x_2}^{(\ell_2)}\right)\cdot\mathbf{1}_{A_n}\right].
\end{align*}
More specifically, we prove that for any $\ell_1,\ell_2$,
$$\E\left[\dfrac{1}{n^2}\left(\SL{x_1,x_2}C_{x_1}^{(\ell_1)}\cdot C_{x_2}^{(\ell_2)}\right)\cdot\mathbf{1}_{A_n}\right]\le(1+o(1))\cdot d^{\ell_1+\ell_2},$$
which will imply that
\begin{equation}\label{sec-mom-avg}
\E\left[\left(\dfrac{1}{n}\SL{x}\SL{\ell=1}^{\sqrt{n}/e^{\sqrt{\log(n)}}}d^{-\ell}\cdot C_x^{(\ell)}\cdot\dfrac{1}{\ell}\right)^2\cdot\mathbf{1}_{A_n}\right]=\left(\dfrac{1}{4}+o(1)\right)\cdot\log(n)^2.
\end{equation}
Just as we did in the previous calculations, we write
$$\E\left[\dfrac{1}{n^2}\left(\SL{x_1,x_2}C_{x_1}^{(\ell_1)}\cdot C_{x_2}^{(\ell_2)}\right)\cdot\mathbf{1}_{A_n}\right]\le\dfrac{1}{n^2}\SL{(w_1,w_2)\in\mathcal{R}^{(2)}}\PR(\{w_1\in G\}\cap\{w_2\in G\}),$$
where the sum is over all respectful pairs of walks $(w_1,w_2)$ of length $\ell_1,\ell_2$ respectively, that are initially Non-Backtracking and afterwards Self-Avoiding. Consider such a pair $(w_1,w_2)$. We run the walks sequentially as in the proofs of Lemmas \ref{average-vertex} and \ref{mid-range}. For the walks, we again say that at any point in time, they can be in one of two modes, either visiting the past or exploring new edges. However, as far as $w_2$ is concerned, we will need to be a bit more careful in our analysis, so we recognize five different modes for this walk:
\begin{enumerate}
\item
We say that walk $w_2$ is in mode 1, when it is performing the Non-Backtracking part and is exploring new edges.
\item 
We say that $w_2$ is in mode 2, when it is performing the Non-Backtracking part and is visiting the past.
\item 
We say that $w_2$ is in mode 3, when it is performing the Self-Avoiding part and is exploring new edges.
\item 
We say that $w_2$ is in mode 4, when it is performing the Self-Avoiding part and is visiting edges previously visited by the NB parts of either of the two walks.
\item 
We say that $w_2$ is in mode 5, when it is performing the Self-Avoiding part and is visiting edges previously visited only by the Self-Avoiding part of $w_1$.
\end{enumerate}
To be consistent with previous terminology, we refer to modes 2, 4 and 5 as modes in which the walk is visiting the past, whereas modes 1 and 3 are modes in which new edges are being explored. Assume the following about the pair of walks $(w_1,w_2)$:
\begin{itemize}
\item 
Set $a=1$ or $0$, if the starting point of $w_2$ was visited by $w_1$ or not, respectively.
\item
Each walk $w_i$ makes $m_i$ shifts of mode between exploring new edges and visiting the past, with the lengths of these trips being $\gamma_1^{(i)},\gamma_2^{(i)},...,\gamma_{m_i}^{(i)}$, just as in the proof of Lemma \ref{average-vertex}. These lengths satisfy the equations
\begin{align}\label{gamma-eq-4}
\gamma_1^{(i)}+\cdots+\gamma_{m_i}^{(i)}=\ell_i.
\end{align}
\item 
The walk $w_2$ is in mode 5 for a total of $\gamma^{(5)}$ edges, and visits the past for $\gamma_p$ edges. Also, set $\alpha=\gamma_p-\gamma^{(5)}$. Then, we have the equations
\begin{align}\label{gamma-eq-5}
\SL{j\equiv a(\bmod2)}\gamma_j^{(2)}=\gamma_p\ \ \ \text{and}\ \ \SL{j\neq a(\bmod2)}\gamma_j^{(2)}=\ell_2-\gamma_p.
\end{align}
Moreover, we will use the fact that $\alpha\le3K\log_d(n)$, which is true since the Non-Backtracking part of $w_2$ has length $K\log_d(n)$ and in the Self-Avoiding part of $w_2$, the edges that can be used while in mode 4 are at most $2K\log_d(n)$.
\item 
During the whole process, a total of $M$ trips to the past are made after a trip exploring new edges within the same walk. As explained in previous calculations,
\begin{align}\label{M-eq-3}
M=\left\lfloor\dfrac{m_1}{2}\right\rfloor+\left\lfloor\dfrac{m_2-a}{2}\right\rfloor.
\end{align}
\end{itemize}
If the pair of walks satisfies the above conditions, as we already explained,
\begin{equation}\label{prob-2-in}
\PR(\{w_1\in G\}\cap\{w_2\in G\})=\left(\dfrac{d}{n}\right)^{|w_1\cup w_2|}=\left(\dfrac{d}{n}\right)^{\SL{j\equiv1(\bmod2)}\gamma_j^{(1)}+\SL{j\neq a(\bmod2)}\gamma_j^{(2)}}.
\end{equation}
We now bound the number of respectful pairs of walks $(w_1,w_2)$ in the complete graph $K_n$.\\\\
As in the proof of Lemma \ref{average-vertex}, any trip exploring new edges of length $\gamma$ can be carried out in at most $n^\gamma$ ways, if there is no trip to the past afterwards. If there is a trip to the past afterwards, since the last vertex must be already discovered, there are at most $n^{\gamma-1}\cdot(\ell_1+\ell_2)$ ways to carry out this trip. Moreover, since the pair $(w_1,w_2)$ is respectful, a trip to the past of length $\gamma\le2K\log_d(n)$ can be carried out in at most $d^\gamma\cdot\log(n)^{4K/\delta}$ ways. Observe that any trip in mode 2 or 4 is such a trip, as well as any trip to the past of the walk $w_1$. Also, any trip in mode 5 for $w_2$ can be carried out in at most two ways, since the parts of the two walks intersecting are both Self-Avoiding.\\\\
If $a=0$, the number of ways to choose the pair of starting points of the walks is at most $n^2$. On the other hand, if $a=0$, the starting point of $w_2$ must be visited by $w_1$, so there are at most $\ell_1$ choices for it.
\\\\
We now claim that the number of times we enter mode either 2 or 4 from one of the modes 1,3 and 5 is at most $2\cdot\lfloor\frac{m_1}{2}\rfloor+3\cdot\lfloor\frac{m_2}{2}\rfloor\le3(M+a)$. Indeed, the number of times we enter from modes 1 and 3 are of course at most $\lfloor\frac{m_2}{2}\rfloor$. It is also clear that after being in mode 5, the walk can only shift to mode 4, not 2. So, we need to bound the number of ways this can happen. Because the second part of $w_1$ is Self-Avoiding, for each return to the Non-Backtracking part of $w_1$, there is a point at which it enters and a point from which it leaves. The number of these points is at most $2\cdot\lfloor\frac{m_1}{2}\rfloor$, and for each shift from mode 5 to mode 4 in which the past is from the Non-Backtracking part of $w_1$, only one of the entry/exit points can be points at which the shift occurs. Similarly, for the shifts from mode 5 to mode 4 that occur on an intersection between the Self-Avoiding part of $w_1$ and the Non-Backtracking part of $w_2$, each possible shifting point was an entry or an escape point during the Non-Backtracking part of $w_2$. Therefore, the possible number of shifting points is at most $2\cdot\lfloor\frac{m_2}{2}\rfloor$.\\\\
Finally, it should be noted that due to the second part of $w_1$ being Self-Avoiding, given the $\gamma_j^{(i)}$'s and the lengths of the trips to modes 2 and 4 of $w_2$, the lengths of the trips to mode 5 are also determined. The number of ways to decide the lengths of the trips to modes 2 and 4 is at most $(2K\log_d(n))^{3M+3a}$, as each one of them has length at most $2K\log_d(n)$ and there are at most $3M+3a$ such trips.\\\\
Taking all of these into consideration, we conclude that the expected number of pairs of walks appearing in $G$ satisfying these assumptions is at most
$$n^2\cdot d^{\ell_1+\ell_2-\gamma^{(5)}}\cdot\left(\log(n)^{4K/\delta}\right)^{3M+3a}\cdot\left(\dfrac{d\cdot(\ell_1+\ell_2)}{n}\right)^M\cdot\left(\dfrac{\ell_1}{n}\right)^{a}.$$
The number of solutions to equations (\ref{gamma-eq-4}) and (\ref{gamma-eq-5}) is at most
$$(\ell_1+\ell_2)^M\cdot(K\log(n))^{\lfloor\frac{m_1}{2}\rfloor}\cdot\dfrac{\gamma_p^{\lfloor\frac{m_2}{2}\rfloor}}{\lfloor\frac{m_2}{2}\rfloor!}.$$
Putting everything together,
\begin{align*}
\E\left[\dfrac{1}{n^2}\left(\SL{x_1,x_2}C_{x_1}^{(\ell_1)}\cdot C_{x_2}^{(\ell_2)}\right)\cdot\mathbf{1}_{A_n}\right]\\\le\  \SL{a\in\{0,1\}}\SL{M,m_1,m_2}\SL{\substack{\gamma_p,\alpha\\\alpha\le\gamma_p,3K\log_d(n)}} d^{\ell_1+\ell_2-\gamma_p+\alpha}&\cdot\left(\dfrac{c_0\cdot(\ell_1+\ell_2)^2\cdot\log(n)^{12K/\delta}}{n}\right)^M\\&\cdot\left(\dfrac{\ell_1\cdot\log(n)^{12K/\delta}}{n}\right)^a\cdot(K\log_d(n))^{\lfloor\frac{m_1}{2}\rfloor}\cdot\dfrac{\gamma_p^{\lfloor\frac{m_2}{2}\rfloor}}{\lfloor\frac{m_2}{2}\rfloor!}.\\
\end{align*}
We first examine the contribution from terms for which $m_2\ge3$, or $m_2=2$ and $a=0$. In this case, $M\ge1$ and $\lfloor\frac{m_2}{2}\rfloor\le2M.$ Also,
\begin{align*}
\SL{\substack{\alpha\le\gamma_p\\\alpha\le3K\log_d(n)}}d^{\alpha-\gamma_p}\cdot\dfrac{\gamma_p^{\lfloor\frac{m_2}{2}\rfloor}}{\lfloor\frac{m_2}{2}\rfloor!}&=\SL{\gamma_p}d^{-\gamma_p}\cdot\dfrac{\gamma_p^{\lfloor\frac{m_2}{2}\rfloor}}{\lfloor\frac{m_2}{2}\rfloor!}\SL{\alpha\le\min(\gamma_p,3K\log_d(n))}d^\alpha\\&\le c\SL{\gamma_p\ge6K\log_d(n)}d^{-\gamma_p/2}\cdot\dfrac{\gamma_p^{\lfloor\frac{m_2}{2}\rfloor}}{\lfloor\frac{m_2}{2}\rfloor!}+c\SL{\gamma_p<6K\log_d(n)}\gamma_p^{\lfloor\frac{m_2}{2}\rfloor}\\&\le c\left(\left(\dfrac{4}{\log(d)}\right)^{\lfloor\frac{m_2}{2}\rfloor}+(6K\log_d(n))^{3M}\right)\le(7K\log_d(n))^{4M}.\\
\end{align*}

In the case when $m_2=2$ and $a=1$, we must have $\gamma_p\le\ell_2$, which implies
$$\SL{\substack{\alpha\le\gamma_p\\\alpha\le3K\log(n)}}d^{\alpha-\gamma_p}\cdot\dfrac{\gamma_p^{\lfloor\frac{m_2}{2}\rfloor}}{\lfloor\frac{m_2}{2}\rfloor!}\le\sqrt{n}\cdot\SL{\substack{\alpha\le\gamma_p\\\alpha\le3K\log(n)}}d^{\alpha-\gamma_p}\le c\cdot\ell_2\cdot\log(n).\\$$
In the case $m_2=1$ and $a=0$, we must have $\gamma_p=\alpha=0$, whereas in the case $m_2=1$ and $a=1$, we must have $\gamma_p=\ell_2$ and
$$\SL{\alpha\le\ell_2,3K\log(n)}d^{-\ell_2+\alpha}\le(1-d^{-1})^{-1}.$$
The number of solutions $n_{M,a}$ to (\ref{M-eq-3}) is, just as in the proof of Lemma \ref{average-vertex}, at most
$$
\begin{cases}
	1,\ \text{if}\ M=a=0\\
	2,\ \text{if}\ M=0\ \text{and}\ a=1\\
	4(M+1),\ \text{if}\ M\ge1.
\end{cases}
$$
In all of the cases, for any $a$ and $M$,
$$\SL{m_1,m_2}\SL{\substack{\gamma_p,\alpha\\\alpha\le\gamma_p,3K\log_d(n)}} d^{-\gamma_p+\alpha}\cdot(K\log_d(n))^{\lfloor\frac{m_1}{2}\rfloor}\cdot\dfrac{\gamma_p^{\lfloor\frac{m_2}{2}\rfloor}}{\lfloor\frac{m_2}{2}\rfloor!}\le n_{M,a}\cdot(c_0\log(n)^5)^M\cdot(c_0\cdot\ell_2\cdot\log(n))^a.$$

Due to all of these calculations, and the fact that $(\ell_1+\ell_2)^2/n\le e^{-\sqrt{\log(n)}},$
\begin{align*}
&\E\left[\dfrac{1}{n^2}\left(\SL{x_1,x_2}C_{x_1}^{(\ell_1)}\cdot C_{x_2}^{(\ell_2)}\right)\cdot\mathbf{1}_{A_n}\right]\\\le\ &
\ d^{\ell_1+\ell_2}\SL{a\in\{0,1\}}\SL{M\ge0}\left(\dfrac{c_0\log(n)^{12K/\delta+5}}{e^{\sqrt{\log(n)}}}\right)^M\cdot\left(\dfrac{c_0\log(n)^{12K/\delta+1}}{e^{\sqrt{\log(n)}}}\right)^a\cdot n_{M,a}\\=\ &(1+o(1))\cdot d^{\ell_1+\ell_2},
\end{align*}
which exactly implies (\ref{sec-mom-avg}), as we explained.\\\\
Combining this relation with (\ref{1-mom-avg}) and Chebyshev's inequality, we see that if we set
$$Z_n:=\dfrac{1}{n}\SL{x}\SL{\ell=1}^{\sqrt{n}/e^{\sqrt{\log(n)}}}d^{-\ell}\cdot C_x^{(\ell)}\cdot\dfrac{1}{\ell},$$
we get
$$\PR\left(Z_n>2\cdot\E(Z_n)\right)\le\PR(A_n^c)+\dfrac{\E\left[\left(Z_n-\E(Z_n)\right)^2\cdot\mathbf{1}_{A_n}\right]}{\E(Z_n\cdot\mathbf{1}_{A_n})^2}=o(1).$$
The proof is complete.
\end{proof}


\begin{thebibliography}{99}
\bibitem{sub-complete} \author{Aizenman, M., Holley, R.}, \title{Rapid convergence to equilibrium of stochastic Ising
models in the Dobrushin Shlosman regime}, (Minneapolis, Minn., 1984), IMA Vol. Math. Appl., vol. 8, Springer, New York, 1987, pp. 1–11
\bibitem{AF} \author{Aldous, D., Fill J.}, \title{Reversible Markov chains and random walks on graphs}, Unfinished manuscript
\bibitem{Polchinski-eq} \author{Bauerschmidt, R., Bodineau, T., Dagallier, B.}, \title{Stochastic dynamics and the Polchinski equation: An introduction}, Probab. Surveys 21 200 - 290, 2024.
\bibitem{BD} \author{Bauerschmidt, R., Dagallier, B.}, \title{Log-Sobolev inequality for near critical Ising models}, Communications on Pure and Applied Mathematics, 10.1002/cpa.22172, 77, 4, (2568-2576), (2023)
\bibitem{BCMP} \author{Berger, N., Kenyon, C., Mossel, E., and Peres, Y.}, \title{Glauber dynamics on trees and hyperbolic graphs}, Probab. Theory Related Fields 131 (2005), no. 3, 311–340
\bibitem{Bodineau} \author{Bodineau, T.}, \title{Slab percolation for the Ising model}, Probab. Theory Relat. Fields 132, 83–118 (2005).
\bibitem{NBWalks} \author{Bordenave, C., Lelarge, M., Massoulié, L.}, \title{Non-backtracking spectrum of random graphs:
community detection and non-regular Ramanujan graphs}, Annals of Probability, 2018, 46 (1), pp.1-71, 2018
\bibitem{2d-super} \author{Chayes, J. T., Chayes, L., Schonmann, R. H.}, \title{Exponential decay of connectivities in the two-dimensional Ising model}, J. Statist. Phys. 49 (1987), no. 3-4, 433–445
\bibitem{CCYZ} \author{Chen, X., Chen, Z., Yin, Y. and Zhang, X.}, \title{Rapid Mixing at the Uniqueness Threshold}, https://arxiv.org/pdf/2411.03413v1, 2024
\bibitem{StochLoc} \author{Chen, Y., Eldan, R.}, \title{Localization schemes: A framework for proving mixing bounds for Markov chains}, 2022 IEEE 63rd Annual Symposium on Foundations of Computer Science (FOCS), Denver, CO, USA, 2022, pp. 110-122
\bibitem{D-M}  \author{Dembo, A., Montanari, A.}, \title{Ising models on locally tree-like graphs}, Ann. Appl. Probab. 20 565–592, 2010.
\bibitem{DLP}  \author{Ding, J., Lubetzky, E., Peres, Y.}, \title{Mixing Time of Critical Ising Model on Trees is Polynomial in the Height} Commun. Math. Phys. 295, 161–207 (2010).
\bibitem{D-S-S} \author{Ding, J., Song, J., Sun, R.}, \title{A New Correlation Inequality for Ising Models with External Fields},
Probability Theory and Related Fields,
Volume 186, Issue 1, Pages 477-492, 2023
\bibitem{G-M} \author{Gerschenfeld, A., Montanari, A.}, \title{Reconstruction for models on random graphs}, In 48th Annual IEEE Symposium on Foundations of Computer Science (FOCS’07), pages 194–204, 2007
\bibitem{Glauber} \author{Glauber, R. J.}, \title{Time-dependent statistics of the Ising model}, J. Mathematical Phys. 4 (1963), 294–307
\bibitem{super-complete} \author{Griffiths, R. B., Weng, C.-Y., Langer J. S.}, \title{Relaxation Times for Metastable States in the Mean-Field Model of a Ferromagnet}, Phys. Rev. 149 (1966), 301 –305
\bibitem{critical-complete} \author{Levin, D. A., Luczak, M. J., Peres, Y.} \title{Glauber dynamics for the mean-field Ising model: cut-off, critical power law, and metastability}, Probab. Theory Relat. Fields 146, 223 (2010).
\bibitem{LPW} \author{Levin, D. A., Peres, Y.}, \title{Markov chains and mixing times}, American Mathematical Society, Providence, RI. Second edition of [MR2466937], With contributions by Elizabeth L. Wilmer, With a chapter on “Coupling from the past” by James G. Propp and David B. Wilson, 2017
\bibitem{Lubetzky-Sly} \author{Lubetzky, E., Sly, A.} \title{Critical Ising on the Square Lattice Mixes in Polynomial Time}, Commun. Math. Phys. 313, 815–836 (2012).
\bibitem{Sparse-Random-Ising} \author{Liu, K., Mohanty, S., Rajaraman, A. and Wu, D. X.}, \title{Fast Mixing in Sparse Random Ising Models}, arXiv:2405.06616, 2024
\bibitem{sub-lattices} \author{Martinelli, F., Oliveiri, E.}, \title{Approach to equilibrium of Glauber dynamics in the one
phase region. I. The attractive case}, Commun. Math. Phys. 161, 447-486 (1994)
\bibitem{Mossel&Sly} \author{Mossel, E., Sly, A.}, \title{Exact thresholds for Ising–Gibbs samplers on general graphs}, Ann. Probab. 41 (1) 294 - 328, 2013
\bibitem{Mossel-Sly} \author{Mossel, E., Sly, A.}, \title{Rapid mixing of Gibbs sampling on graphs that are sparse on average}, Random Structures and Algorithms 35 (2) pp. 250--270, 2009
\bibitem{Pisztora} \author{Pisztora, A.}, \title{Surface order large deviations for Ising, Potts and percolation models}, Probab. Th. Rel. Fields 104, 427–466 (1996).
\bibitem{PS} \author{Prodromidis, K.-I., Sly, A.}, \title{Polynomial Mixing of the critical Glauber Dynamics for the Ising Model}, https://arxiv.org/abs/2411.10318, 2024
\bibitem{RvdH} \author{Van der Hofstad, R.}, \title{Random Graphs and Complex Networks},
(Cambridge University Press, Cambridge, 2016), Vol. 1.
\bibitem{Weitz}
\author{Weitz, D.}, \title{Counting independent sets up to the tree threshold}, Proceedings of the Thirty-Eighth Annual	ACM Symposium on Theory of Computing, pages 140–149, 2006
\end{thebibliography}
\end{document}